\newtheorem*{Thm*}{Theorem}
\newtheorem{Thm}{Theorem}
\newtheorem{Cor}{Corollary}
\newtheorem{Prop}{Proposition}
\newtheorem{Lemma}{Lemma}
\theoremstyle{definition}
\newtheorem{Defn}{Definition}
\newtheorem{Notation}{Notation}
\newtheorem{Remark}{Remark}
\newtheorem{Example}{Example}
\newtheorem{Construction}{Construction}
\newcommand{\abs}[1]{\left\vert#1\right\vert}
\newcommand{\set}[1]{\left\{#1\right\}}
\newcommand{\mf}[1]{\mathbb{#1}}
\newcommand{\mc}[1]{\mathcal{#1}}
\newcommand{\mb}[1]{\mathbf{#1}}
\newcommand{\ip}[2]{\left \langle{#1},{#2} \right \rangle}
\newcommand{\norm}[1]{\left \|{#1} \right \|}
\DeclareMathOperator{\Span}{\mathrm{Span}}
\DeclareMathOperator{\Inner}{\mathrm{Inner}}
\DeclareMathOperator{\Outer}{\mathrm{Outer}}
\title{Fock representation of free convolution powers}
\author{Michael Anshelevich, Jacob Mashburn}
\thanks{This work was supported in part by a Simons Foundation Collaboration Grant.}
\address{Department of Mathematics, Texas A\&M University, College Station, TX 77843-3368}
\email{manshel@tamu.edu, jacobmashburn16@tamu.edu}
\subjclass[2010]{Primary 46L54}
\begin{document}

\begin{abstract}
Let $\mc{B}$ be a star-algebra with a state $\phi$, and $t > 0$. Through a Fock space construction, we define two states $\Phi_t$ and $\Psi_t$ on the tensor algebra $\mc{T}(\mc{B}, \phi)$ such that under the natural map $(\mc{B}, \phi) \rightarrow (\mc{T}(\mc{B}, \phi), \Phi_t, \Psi_t)$, free independence of arguments leads to free independence, while Boolean independence of centered arguments leads to conditionally free independence. The construction gives a new operator realization of the $(1+t)$'th free convolution power of any joint (star) distribution. We also compute several von Neumann algebras which arise.
\end{abstract}


\maketitle



\section{Introduction}

Early results on the addition of freely independent random variables and free convolution emphasized the parallels with the theorems in classical probability. One of the first results exhibiting qualitatively new behavior in the free case was obtained by Nica and Speicher in
\cite{Nica-Speicher-Multiplication}. They showed that for any (joint) distribution $\mu$, its free convolution power $\mu^{\boxplus (1+t)}$ is defined for any real $t \geq 0$. Related results were obtained by random matrix methods by Voiculescu in the appendix of \cite{Nica-Speicher-Multiplication}; by a Fock space representation \cite{Shl97b}; by complex analytic methods in \cite{BerVoiSuperconvergence,Belinschi-Bercovici-Partially-defined}; and in the operator-valued setting in \cite{Ans-Bel-Fev-Nica,Shl-Free-convolution}.

Classically, one can only expect such behavior for infinitely divisible distributions. So the key illustrative example was the Bernoulli distribution, whose convolution powers (the binomial distributions) are only defined for integer powers, but whose free convolution powers (the free binomial distributions) are defined for any power $1 + t \geq 1$. The free binomial distributions are the Kesten-McKay distributions in the symmetric case, and more generally belong to the free Meixner class.

In \cite{AnsFree-Meixner}, the first author introduced the class of multivariate free Meixner states as joint distributions of operators obtained via a Fock space construction with depth two action \cite{Ans-Mashburn-Nearest}. Again, free multinomial distribution was proved to be the key example of a non-infinitely-divisible multivariate free Meixner distribution. Here the free multinomial distribution is the free convolution power of the joint distribution of several orthogonal projections.

In this paper we show that the preceding paragraph is somewhat misleading. There is nothing special about the free multinomial distribution. For any fixed $t \geq 0$, and any $\ast$-algebra $\mc{B}$ with a state $\phi$, we construct a Fock space such that for any $n$-tuple of centered elements $f_1, \ldots, f_n \in \mc{B}$, the joint distribution of the corresponding operators $X(f_1), \ldots, X(f_n)$ on the Fock space is the $(1+t)$'th free convolution power of the joint distribution of $f_1, \ldots, f_n$. One gets the free multinomial distribution by taking the $f_i$'s to be centered versions of orthogonal projections. One difference from the construction in \cite{Shl97b} is that $X(f)$ is symmetric for $f$ symmetric, and thus the result also holds for their joint $\ast$-distributions.

We state the full results in terms of the tensor algebra. Denote $\mc{B}^\circ = \ker \phi$, and let $\mc{T}(\mc{B}, \phi) = \mc{T}(\mc{B}^\circ)$ be the tensor algebra of $\mc{B}^\circ$. It will be convenient to denote the element of $\mc{T}(\mc{B}, \phi)$ corresponding to $f \in \mc{B}^\circ$ by $X(f)$, so that $\mc{T}(\mc{B}, \phi)$ is identified with the algebra of polynomials in $\set{X(f) : f \in \mc{B}^\circ}$ (subject to linearity relations). We construct explicit Fock representations for two (families of) states $\Phi_t$ and $\Psi_t$ on $\mc{T}(\mc{B}, \phi)$ with the following properties.

\begin{Thm*}
\

\begin{itemize}
\item
For $\set{f_1, \ldots, f_n} \subset \mc{B}^\circ$, the joint $\ast$-distribution of $X(f_1), \ldots, X(f_n)$ in $(\mc{T}(\mc{B}, \phi), \Phi_t)$ is the $(1+t)$'th free convolution power of the joint $\ast$-distribution of $f_1, \ldots, f_n$ in $(\mc{B}, \phi)$.
\item
For $\set{f_1, \ldots, f_n} \subset \mc{B}^\circ$, the joint $\ast$-distribution of $X(f_1), \ldots, X(f_n)$ in $(\mc{T}(\mc{B}, \phi), \Psi_t)$ is a Boolean convolution power of their joint $\ast$-distribution in $(\mc{T}(\mc{B}, \phi), \Phi_t)$.
\item
If $\set{f_1, \ldots, f_n} \subset \mc{B}$ are $\ast$-freely independent in $(\mc{B}, \phi)$, then the corresponding operators $\set{X(f_1), \ldots, X(f_n)}$ are $\ast$-freely independent with respect to $\Phi_t$.
\item
If instead $\set{f_1, \ldots, f_n} \subset \mc{B}^\circ$ are $\ast$-Boolean independent in $(\mc{B}, \phi)$, then the corresponding operators $\set{X(f_1), \ldots, X(f_n)}$ are $\ast$-freely independent with respect to $\Psi_t$, and $\ast$-conditionally free with respect to the pair $(\Phi_t, \Psi_t)$.
\item
$\Phi_t$ is tracial if $\phi$ is.
\item
For $t > 0$, $\Phi_t$ is faithful on $\mc{T}(\mc{B}, \phi)$ if $\phi$ is.
\end{itemize}
\end{Thm*}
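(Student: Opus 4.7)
The plan is to construct an explicit Fock-space model and derive all six bullets from a single moment--cumulant computation. I expect $\Phi_t$ and $\Psi_t$ to arise as vacuum-type states on a weighted full Fock space $\mc{F}$ over $\mc{B}^\circ$, with operators $X(f) = \ell^+(f) + \Lambda(f) + \ell^-(f)$ decomposed as creation, preservation and annihilation parts built from the left action of $\mc{B}^\circ$ on $\mc{B}^\circ$-valued tensors; the parameter $t$ enters through a rescaling of the $\phi$-inner product on one distinguished slot of $\mc{F}$. Requiring $X(f)^* = X(f)$ when $f = f^*$ pins down the pairing between $\ell^+$ and $\ell^-$.

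For the first bullet, the central step is to expand $\Phi_t(X(f_{i_1}) \cdots X(f_{i_k}))$ into its creation/preservation/annihilation three-letter alphabet and organize the surviving terms as a sum over noncrossing partitions $\pi$ of products of $\phi$-moments taken around the blocks of $\pi$. This expansion should match the moment--cumulant formula of a joint distribution whose free cumulants equal $(1+t)$ times the joint free cumulants of $f_{i_1}, \ldots, f_{i_k}$ in $(\mc{B}, \phi)$. Combined with the Nica--Speicher characterization of the $(1+t)$'th free convolution power via the scaling $R_{\mu^{\boxplus(1+t)}} = (1+t) R_\mu$ of the joint $R$-transform, this identifies the joint $\ast$-distribution under $\Phi_t$ as $\mu^{\boxplus(1+t)}$.

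Given this cumulant description, the remaining independence statements are largely formal. For the third bullet, vanishing of mixed free cumulants of $f_1, \ldots, f_n$ in $(\mc{B}, \phi)$ transfers, via the $(1+t)$-scaling, to vanishing of mixed free cumulants of $X(f_1), \ldots, X(f_n)$ under $\Phi_t$, which is equivalent to free independence. For the fourth bullet, vanishing of mixed Boolean cumulants of centered elements translates through the construction to freeness with respect to $\Psi_t$ and, via the Bo\.zejko--Leinert--Speicher characterization of $c$-free cumulants, to conditional freeness with respect to the pair $(\Phi_t, \Psi_t)$. The second bullet then follows from the universal relation between free and Boolean cumulants: once I check that the Boolean cumulants of $X(f_i)$ under $\Psi_t$ coincide with the free cumulants under $\Phi_t$, $\Psi_t$ is by definition a Boolean convolution power of $\Phi_t$ on the algebra generated by the $X(f_i)$'s.

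For the fifth bullet, the partition expansion from the first step writes each term as a product of $\phi$-values along cyclically labeled blocks, so when $\phi$ is tracial each summand is invariant under cyclic permutation of the arguments and $\Phi_t$ inherits traciality. For the sixth bullet, I would realize $(\mc{T}(\mc{B}, \phi), \Phi_t)$ via a GNS-type representation on $\mc{F}$: when $t > 0$ and $\phi$ is faithful the Fock inner product is positive definite, so any polynomial $a$ in the $X(f)$'s produces a nonzero vector $a\Omega \in \mc{F}$ unless $a = 0$, whence $\Phi_t(a^* a) = \norm{a\Omega}^2 > 0$. The main obstacle is the combinatorial identity underlying the first bullet: verifying that the Fock moment formula produces joint free cumulants scaled by exactly $(1+t)$ requires careful bookkeeping of how preservation operators contribute inside a block and how annihilation--creation pairings cross block boundaries, balanced precisely against the $t$-rescaling in the inner product. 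Once that identity is secured, the other five bullets are essentially its formal consequences.
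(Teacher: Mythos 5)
Your outline points bullets one, three, and four in the right direction, but two of your steps are wrong as stated. First, bullet two: a Boolean convolution power rescales \emph{Boolean} cumulants, so what must be verified is $B^{\Psi_t}[X(f_1),\ldots,X(f_n)] = \frac{t}{1+t}\, B^{\Phi_t}[X(f_1),\ldots,X(f_n)]$. Your criterion ``Boolean cumulants under $\Psi_t$ coincide with free cumulants under $\Phi_t$'' is both the wrong kind of relation (matching $B$ to $R$ is a Bercovici--Pata-type bijection, not a $\uplus$-power) and false on its face: for $n=2$ both cumulants are just second moments of centered variables, and $B^{\Psi_t}[X(f_1),X(f_2)] = t\,\phi[f_1 f_2]$ while $R^{\Phi_t}[X(f_1),X(f_2)] = (1+t)\phi[f_1 f_2]$. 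The paper's Proposition~\ref{Prop:TwoStateFCs}(a) instead computes $R^{\Psi_t}[X(f_1),\ldots,X(f_n)] = t\,B^\phi[f_1,\ldots,f_n]$ from a \emph{second} Fock model and then resums $B^{\Psi_t} = \sum_{\pi\in\cNC(n)} t^{\abs{\pi}} B^\phi_\pi$ via the Belinschi--Nica transformation, using $\mf{B}_t(\mu)^{\uplus t} = \left(\mu^{\boxplus(1+t)}\right)^{\uplus t/(1+t)}$ --- a genuinely nontrivial input, not a formal consequence of bullet one. Second, bullet five: termwise cyclic invariance of the expansion \eqref{Eq:Phi_t} fails. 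The weight $(1+t)^{\abs{\Outer(\pi)}} t^{\abs{\Inner(\pi)}}$ is not rotation invariant (rotating $\set{\set{1,4},\set{2,3}}$ in $[4]$ gives $\set{\set{1,2},\set{3,4}}$, turning an inner block outer), and the block factors $\phi[W(V)]$ are Boolean cumulants of the block entries (Corollary~\ref{Cor:Boolean-cumulants}), which are not cyclically invariant even for tracial $\phi$. Decisively, your argument would apply verbatim to $\Psi_t$, whose weights $t^{\abs{\pi}}$ \emph{are} rotation invariant, yet Lemma~\ref{Lemma:Nontracial} shows $\Psi_t$ is essentially never tracial. The paper instead verifies the hypotheses of Proposition~\ref{Prop:AM22}(d): the associator identity $\Lambda(f\otimes\Lambda(g\otimes h)) - \Lambda(\Lambda(f\otimes g)\otimes h) = -f\phi[gh]+\phi[fg]h$ and the condition $\gamma[f]\rho[g]=\rho[f]\gamma[g]$, which holds precisely because $\gamma=-\phi$ and $\rho=(1+t)\phi$ are proportional; compare Proposition~\ref{Prop:s-neq-t}, which shows traciality is destroyed by other coefficient choices.

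Two further points. For bullet one you defer exactly the hard identity, and you should know the paper never verifies it by bookkeeping: the free-cumulant formula \eqref{Eq:Free-cumulants} inherited from \cite{Ans-Mashburn-Nearest} factors as $(1+t)$ times a $t$-independent sum over $\cNC_{ns}(n)$, and Proposition~\ref{Prop:t=0} (at $t=0$ the construction reproduces $(\mc{B},\phi)$ exactly) identifies that sum with $R^\phi[f_1,\ldots,f_n]$; the $t=0$ specialization replaces all the combinatorics you flag as the main obstacle. For bullet six, positive definiteness of the inner product gives $\Phi_t[a^\ast a] = \norm{\pi_{\Phi,t}(a)\Omega}^2$, but it does \emph{not} give $\pi_{\Phi,t}(a)\Omega \neq 0$ for $0\neq a\in\mc{T}(\mc{B},\phi)$: the representation of the tensor algebra could a priori have a kernel (it does at $t=0$, where $\Phi_0$ factors through $\mc{B}$). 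The missing step is the argument of Theorem~\ref{Thm:pi-faithful}(d): the top tensor component of $\pi_{\Phi,t}(a)\Omega$ is $\sum_i f_1^{(i)}\Omega\otimes\cdots\otimes f_n^{(i)}\Omega$, faithfulness of $\phi$ makes $f\mapsto f\Omega$ injective on $\mc{B}^\circ$, and a basis argument in $(\mc{H}^\circ)^{\otimes n}$ then forces the top component of $a$ to vanish. Finally, in bullet four the identity $R^{(\Phi_t,\Psi_t)}[X(f_1),\ldots,X(f_n)] = (1+t)B^\phi[f_1,\ldots,f_n]$ that you implicitly invoke is itself proved in Proposition~\ref{Prop:TwoStateFCs}(b) by an inductive decomposition of each noncrossing partition into an interval partition with irreducible fillings; a complete write-up needs that computation, not just the citation of the $c$-free cumulant characterization.
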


It is a familiar fact that under the Gaussian functor, orthogonal vectors in a Hilbert space correspond to independent (Gaussian) operators on the symmetric Fock space, and under the semicircular functor, orthogonal vectors in a Hilbert space correspond to free (semicircular) operators on the full Fock space. Similarly, the usual/free (compound) Poisson Segal quantization maps elements of an algebra which are strongly orthogonal (in the sense that their product is zero), to independent/free compound Poisson operators. The theorem above lists somewhat similar properties, with orthogonality of initial elements replaced by their free/Boolean independence.

As a by-product, we find a curious formula for Boolean cumulants of centered elements:
\[
B^\phi[f_1, \ldots, f_n] = \phi[f_1 \Lambda(f_2, \Lambda(f_3, \ldots, \Lambda(f_{n-1}, f_n)))]
\]
for $n \geq 3$, where $\Lambda(f,g) = f g - \phi[f g]$.

The second part of the article is concerned with the study of joint distributions of $d$-tuples of operators $\set{X(p_i^\circ): 1 \leq i \leq d}$, where each $p_i^\circ$ is a centered projection, and the von Neumann subalgebras they generate inside the von Neumann algebras $W^\ast(\mc{T}(\mc{B}, \phi), \Phi_t)$ arising in the GNS representation of $\mc{T}(\mc{B}, \phi)$ with respect to $\Phi_t$, for several choices of $(\mc{B}, \phi)$. We start with a single projection with $\phi[p] = \alpha$. See Section~\ref{Sec:Free-product} for some of the notation.

\begin{Thm}
\label{Thm:C2-Intro}
Let $\mc{B} = \underset{\alpha}{\mf{C}} \oplus \underset{1 - \alpha}{\mf{C}}$, with the state determined by the parameter $\alpha \in (0, \frac{1}{2})$. Then
\[
W^\ast\left(\mc{T}\left(\underset{\alpha}{\mf{C}} \oplus \underset{1 - \alpha}{\mf{C}}, \phi \right), \Phi_t \right) \simeq
\begin{cases}
\underset{t}{L^\infty[0,1]} \oplus \underset{1 - \alpha (1 + t)}{\mf{C}} \oplus \underset{\alpha (1 + t) - t}{\mf{C}}, & t < \frac{\alpha}{1 - \alpha} \\
\underset{\alpha + \alpha t}{L^\infty[0,1]} \oplus \underset{1 - \alpha (1 + t)}{\mf{C}}, & \frac{\alpha}{1 - \alpha} \leq t < \frac{1 - \alpha}{\alpha} \\
L^\infty[0,1], & \frac{1 - \alpha}{\alpha} \leq t
\end{cases}
\]
In this case the states $\Phi_t$ and $\Psi_t$ can be identified with measures, and are described explicitly in Theorem~\ref{Thm:C2}.
\end{Thm}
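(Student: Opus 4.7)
Since $\mc{B}^\circ = \ker \phi$ is one-dimensional---spanned by the centered projection $p^\circ = p - \alpha \cdot 1$ where $p = (1, 0)$---the tensor algebra $\mc{T}(\mc{B}, \phi)$ is generated as a unital $\ast$-algebra by the single self-adjoint element $X := X(p^\circ)$. The last bullet of the main theorem asserts that $\Phi_t$ is faithful on $\mc{T}(\mc{B}, \phi)$ for $t > 0$, so in the GNS representation
\[
W^\ast(\mc{T}(\mc{B}, \phi), \Phi_t) \;=\; W^\ast(X) \;\cong\; L^\infty(\mf{R}, \mu_t),
\]
where $\mu_t$ denotes the spectral distribution of $X$ in the state $\Phi_t$. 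The decomposition of $L^\infty(\mu_t)$ into continuous and atomic parts will directly reproduce the stated direct sum, so the problem reduces to an explicit computation of $\mu_t$.

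By the first bullet of the main theorem, $\mu_t = \nu^{\boxplus(1 + t)}$, where $\nu = \alpha\, \delta_{1 - \alpha} + (1 - \alpha)\, \delta_{-\alpha}$ is the distribution of $p^\circ$ under $\phi$. I would compute $\mu_t$ via $R$-transforms: since $G_\nu$ is rational, so are $K_\nu = G_\nu^{-1}$ and $R_\nu(w) = K_\nu(w) - 1/w$. Setting $K_{\mu_t}(w) = 1/w + (1 + t) R_\nu(w)$ and clearing the square root, one finds that $g := G_{\mu_t}(z)$ satisfies the quadratic
\[
\bigl(z - (1+t)(1 - \alpha)\bigr) \bigl(z + (1 + t)\alpha\bigr)\, g^2 + \bigl((t - 1)z + (1 + t)(1 - 2\alpha)\bigr)\, g - t \;=\; 0,
\]
with discriminant $(1 + t)^2 \bigl[(z - (1 - 2\alpha))^2 - 4 t \alpha(1 - \alpha)\bigr]$. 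Choosing the branch of the square root that makes $g(z) \sim 1/z$ at infinity and applying Stieltjes inversion yields an absolutely continuous part of $\mu_t$ supported on the interval
\[
\bigl[\, 1 - 2 \alpha - 2 \sqrt{t \alpha (1 - \alpha)},\; 1 - 2 \alpha + 2 \sqrt{t \alpha (1 - \alpha)} \,\bigr],
\]
with strictly positive density in the interior. The only candidate atoms of $\mu_t$ sit at the two zeros $(1 + t)(1 - \alpha)$ and $-(1 + t)\alpha$ of the leading coefficient of the quadratic; computing residues---and taking care that the chosen branch of the square root takes positive values to the right of the support interval and negative values to the left---one finds atom masses $\alpha(1 + t) - t$ and $1 - \alpha(1 + t)$ when these are positive, with the apparent poles turning out to be removable otherwise.

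The first atom survives exactly when $t < \alpha / (1 - \alpha)$, the second exactly when $t < (1 - \alpha)/\alpha$, and the total continuous mass (i.e.\ $1$ minus the sum of atom masses) equals $t$, $\alpha(1 + t)$, or $1$ in the three regimes, respectively. Under the isomorphism $W^\ast(X) \cong L^\infty(\mu_t)$, each surviving atom of mass $m$ contributes a summand $\mf{C}$ of weight $m$ and the continuous part of total mass $M$ contributes a summand $L^\infty[0, 1]$ of weight $M$, reproducing the three cases of the statement exactly. The identification of $\Phi_t$ and $\Psi_t$ with measures (as promised for Theorem~\ref{Thm:C2}) then comes down to reading off the Radon--Nikodym data of $\mu_t$ together with a parallel $R$-transform computation for $\Psi_t$. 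The only real obstacle is bookkeeping: one must carefully track the sign of the square-root branch on opposite sides of the continuous support in order to distinguish genuine atoms from removable apparent poles. Once the quadratic for $g$ is in hand, all three cases fall out algebraically.
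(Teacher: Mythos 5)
Your proposal is correct and follows essentially the same route as the paper: the paper's Theorem~\ref{Thm:C2} likewise reduces the statement to computing $\mu^\Phi_{X(p^\circ,t)} = \nu^{\boxplus(1+t)}$ via $R$-transforms, applies Stieltjes inversion to get the same absolutely continuous part on $[\,1-2\alpha-2\sqrt{t\alpha(1-\alpha)},\,1-2\alpha+2\sqrt{t\alpha(1-\alpha)}\,]$, and identifies the same two candidate atoms with masses $1-\alpha(1+t)$ and $\alpha(1+t)-t$ (the paper citing the atom formula for free convolution powers from Mingo--Speicher where you compute residues, an equivalent step). The only cosmetic difference is that you organize the computation around the quadratic satisfied by $G_{\mu_t}$ rather than explicitly inverting and re-inverting, and your quadratic, discriminant, and mass bookkeeping all agree with the paper's formulas.
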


The next theorem considers the case when the projections are free.

\begin{Thm*}
Let $\mc{B}$ be the free product $\mc{B} = \ast_{i=1}^d \left( \underset{\alpha_i}{\overset{p_i}{\mf{C}}} \oplus \overset{1 - p_i}{\underset{1 - \alpha_i}{\mf{C}}} \right)$, the $i$'th copy generated by the identity and the projection $p_i$ of state $\alpha_i \leq \frac{1}{2}$. In $W^\ast(\mc{T}(\mc{B}, \phi), \Phi_t)$, the von Neumann subalgebra generated by $X(p_i^\circ)$ is
\[
W^\ast(X(p_i^\circ) : 1 \leq i \leq d) \simeq
\underset{1 - \gamma_1 - \gamma_2}{L(\mf{F}_x)} \oplus \underset{\gamma_1}{\mf{C}} \oplus \underset{\gamma_2}{\mf{C}},
\]
where $L(\mf{F}_x)$ is an interpolated free group factor of appropriate dimension, and
	\begin{align*}
		\gamma_1 &= \max\left\{ 1 - \left( \sum_{i=1}^{d}\alpha_1 \right)(1+t), 0\right\} \\
		\gamma_2 &= \max\left\{ \left( \alpha_d - \sum_{i=1}^{d-1}\alpha_i \right)(1+t) -t, 0 \right\}
	\end{align*}
A more detailed statement is given in Theorem~\ref{Thm:Free-product}.
\end{Thm*}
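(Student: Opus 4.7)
The plan is to combine the freeness statement of the main theorem with Dykema's structure theorem for reduced free products of tracial von Neumann algebras each containing a diffuse subalgebra. Since the generators $p_1, \ldots, p_d$ are $\ast$-freely independent in $(\mc{B}, \phi)$ by construction (they lie in distinct free factors), the main theorem gives that $X(p_1^\circ), \ldots, X(p_d^\circ)$ are $\ast$-freely independent with respect to $\Phi_t$, and the algebra they generate is identified with the reduced free product
\[
W^\ast(X(p_i^\circ) : 1 \leq i \leq d) \simeq \ast_{i=1}^{d} W^\ast(X(p_i^\circ)).
\]
Each factor is supplied by Theorem~\ref{Thm:C2-Intro}: writing $\beta_i^{(0)} = \max\set{1 - \alpha_i(1+t),\, 0}$ and $\beta_i^{(1)} = \max\set{\alpha_i(1+t) - t,\, 0}$, one has $W^\ast(X(p_i^\circ)) \simeq L^\infty[0,1] \oplus \mf{C}\, e_i^{(0)} \oplus \mf{C}\, e_i^{(1)}$, with an atom of trace zero understood to be absent and with the diffuse summand always present because $\alpha_i \leq \tfrac{1}{2}$ forces $\beta_i^{(0)} + \beta_i^{(1)} < 1$.

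Next I would apply Dykema's reduced free product theorem: since each factor contains a diffuse summand, the result has the form $L(\mf{F}_x) \oplus (\text{atomic part})$, and atoms are governed by the free intersection rule
\[
\tau\Bigl( \bigwedge_{i=1}^{d} q_i \Bigr) = \max\Bigl\{ \sum_{i=1}^{d} \tau(q_i) - (d-1),\ 0 \Bigr\}
\]
as $q_i$ ranges independently over $\set{e_i^{(0)}, e_i^{(1)}}$. A short case check using $\alpha_i \leq \tfrac{1}{2}$ shows that at most two such tuples yield a strictly positive intersection. Taking $q_i = e_i^{(0)}$ for every $i$ produces exactly $\gamma_1$ after substitution. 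Taking a single index $k$ with $q_k = e_k^{(1)}$ and $q_i = e_i^{(0)}$ for $i \neq k$ yields $(\alpha_k - \sum_{i \neq k} \alpha_i)(1+t) - t$, which under $\alpha_i \leq \tfrac{1}{2}$ can be positive for at most one $k$, and ordering the generators so that $\alpha_d$ is maximal forces $k = d$ and recovers $\gamma_2$. Configurations with $k \geq 2$ of the $q_i$ equal to $e_i^{(1)}$ give $\sum_i \tau(q_i) - (d-1) \leq 1 - \tfrac{k(1+t)}{2} \leq 0$ and are thus trivial.

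Finally, the complementary summand is an interpolated free group factor $L(\mf{F}_x)$ of weight $1 - \gamma_1 - \gamma_2$, with $x$ read off from Dykema's free dimension formula
\[
\delta\Bigl( \ast_{i=1}^{d} W^\ast(X(p_i^\circ)) \Bigr) = \sum_{i=1}^{d} \delta\bigl( W^\ast(X(p_i^\circ)) \bigr) - (d-1),
\]
combined with the free dimensions of the individual $L^\infty[0,1] \oplus \mf{C} \oplus \mf{C}$ factors (as functions of $\alpha_i$ and $t$) and the standard rescaling that converts total free dimension and the weight of the diffuse summand into the parameter of the interpolated free group factor. The main obstacle I anticipate is precisely this last bookkeeping step — assembling free dimensions, atom masses, and compressions cleanly enough to reproduce the explicit value of $x$ claimed in Theorem~\ref{Thm:Free-product}; by contrast, the reduction to a free product and the identification of $\gamma_1$ and $\gamma_2$ are comparatively routine once Dykema's machinery is in hand.
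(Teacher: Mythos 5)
Your proposal is correct and matches the paper's argument: both reduce to the reduced free product $\ast_{i=1}^d W^\ast(X(p_i^\circ))$ via the freeness corollary and the single-projection computation of Theorem~\ref{Thm:C2-Intro}, then apply Dykema's structure theorem to identify the atoms as $\gamma_1$ and $\gamma_2$. The only organizational difference is that the paper runs an induction on $d$ using Dykema's two-factor Theorem 2.4 (which is exactly how one justifies the $d$-fold intersection rule you invoke), and the ``explicit value of $x$'' you flag as the main obstacle is not actually demanded --- Theorem~\ref{Thm:Free-product} also specifies $x$ only implicitly, through additivity of free dimension.
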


The next theorem considers the case when the projections are Boolean independent. We first observe that the Boolean product of $d$ copies of $\mf{C}^2$, with non-degenerate states, is $M_{d+1}(\mf{C})$ with a vector state.

\begin{Thm}
\label{Thm:Boolean-Phi}
For $1 \leq i \leq d$, let $p_i^\circ = \frac{1}{2} (E_{1, 1+i} + E_{1 + i, 1})$. In $W^\ast(\mc{T}(M_{d+1}(\mf{C}), \phi_{11}), \Phi_t)$, the von Neumann subalgebra generated by $X(p_i^\circ)$ is
\begin{equation*}
W^\ast(X(p_i^\circ) : 1 \leq i \leq d)
\simeq \begin{cases}
L(\mf{F}_d) \oplus \mf{B}(\ell_2) & \text{ if } t < \sqrt{d}, \\
L(\mf{F}_d) & \text{ if } t\geq \sqrt{d}.
\end{cases}
\end{equation*}
Also, in $W^\ast(\mc{T}(\mf{B}(\mc{H}), \phi), \Phi_t)$ for a vector state $\phi$, $W^\ast(X(p_i^\circ) : i \in \mf{N}) \simeq \mf{B}(\ell^2)$.
\end{Thm}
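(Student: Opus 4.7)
The plan is to unpack the Fock-space realization of $(\mc{T}(M_{d+1}(\mf{C}), \phi_{11}), \Phi_t)$, describe the action of each $X(p_i^\circ)$ on the GNS Hilbert space, and identify the resulting von Neumann algebra via a decomposition into invariant subspaces. The main Theorem supplies the starting input: since the centered projections $p_i^\circ = \tfrac{1}{2}(E_{1,1+i}+E_{1+i,1})$ are $\ast$-Boolean independent in $(M_{d+1}(\mf{C}), \phi_{11})$, the operators $X(p_1^\circ), \ldots, X(p_d^\circ)$ are $\ast$-free under $\Psi_t$ and c-free under $(\Phi_t, \Psi_t)$. Each $p_i^\circ$ has symmetric Bernoulli distribution $\tfrac{1}{2}(\delta_{1/2}+\delta_{-1/2})$ in $\phi_{11}$, so the marginal distribution of each $X(p_i^\circ)$ under $\Phi_t$ is the free binomial (the $(1+t)$'th free convolution power of Bernoulli), while under $\Psi_t$ it remains symmetric Bernoulli with only the second Boolean cumulant nonzero, equal to $(1+t)/4$.

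Next I would exploit the c-free structure to split $\mc{H}_{\Phi_t}$ into two $W^\ast(X(p_i^\circ))$-invariant subspaces $\mc{K}_{\mathrm{fr}} \oplus \mc{K}_{\mathrm{at}}$. On $\mc{K}_{\mathrm{fr}}$ the representation is the reduced free-product representation associated with $\Psi_t$, namely $d$ $\ast$-free symmetric Bernoullis of trace $1/2$; by Dykema's classification of free products of $d$ copies of $(\mf{C}^2, (1/2, 1/2))$ (equivalently, of $d$ free projections of trace $1/2$), this produces the interpolated free group factor $L(\mf{F}_d)$. On $\mc{K}_{\mathrm{at}}$ the matrix-unit form of the $p_i^\circ$ severely restricts the surviving Fock tensors: many products $p_{i_1}^\circ p_{i_2}^\circ \cdots p_{i_k}^\circ$ collapse to a single matrix unit or to zero, and the resulting action of $X(p_i^\circ)$ generates a copy of $\mf{B}(\ell^2)$ acting on a countable-dimensional subspace.

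The main technical obstacle is pinning down the threshold $t = \sqrt{d}$, i.e., deciding exactly when $\mc{K}_{\mathrm{at}}$ supports a nontrivial $\mf{B}(\ell^2)$ summand. I expect this to come from an explicit eigenvalue or determinantal computation encoding the existence of an atom in the spectral resolution of (a combination of) the $X(p_i^\circ)^2$ under $\Phi_t$, involving the total Boolean variance $\sum_i \phi_{11}((p_i^\circ)^2) = d/4$ scaled by the convolution-power factor $1+t$ and balanced against the vacuum normalization of the $\Phi_t$-Fock space. Working out the arithmetic should return exactly $t^2 < d$; above $\sqrt{d}$ the would-be atom gets absorbed into the continuous spectrum of the free-product factor and only $L(\mf{F}_d)$ survives.

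For the final claim on $\mf{B}(\mc{H})$ with a vector state $\phi$, a nearly identical argument handles the countably many matrix units $\tfrac{1}{2}(E_{1,1+i} + E_{1+i,1})$, with two simplifications. First, $\mc{K}_{\mathrm{at}}$ now carries all of $\mf{B}(\ell^2)$ directly from the matrix-unit structure with no $\sqrt{d}$ obstruction in sight. Second, since the ambient GNS representation of a vector state on $\mf{B}(\mc{H})$ is already type~I on a single cyclic block, there is no room for an independent free-factor summand, and comparing an obvious inclusion with the ambient type~I constraint forces $W^\ast(X(p_i^\circ) : i \in \mf{N}) \simeq \mf{B}(\ell^2)$.
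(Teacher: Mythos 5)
Your outline starts on the right foot (Boolean independence of the $p_i^\circ$ in $(M_{d+1}(\mf{C}),\phi_{11})$ gives conditional freeness of the $X(p_i^\circ)$ with respect to $(\Phi_t,\Psi_t)$, which is Corollary~\ref{Cor:Independence-functor}(b)), but after that it diverges from anything that would close, and it misses the one idea the paper actually uses. The paper does not decompose the Fock space and analyze the algebra directly. Instead it observes that the pair of marginal laws of $2\sqrt{1-\theta}\,X(p_i^\circ)$ under $(\Phi_t,\Psi_t)$, with $t=\theta/(1-\theta)$, coincides exactly with the pair of laws of Ricard's $t$-Gaussian operators $Z(f_i,\theta)$ under $(\phi,\psi)$ from Construction~\ref{Constr:Ricard} (compare Theorem~\ref{Thm:C2} at $\alpha=\tfrac12$ with \eqref{Eq:Ricard1} and \eqref{Eq:Ricard2}); since both families are conditionally free with respect to their respective state pairs, the joint distributions agree, hence the generated von Neumann algebras are isomorphic, and the entire classification --- the $L(\mf{F}_d)\oplus\mf{B}(\ell_2)$ versus $L(\mf{F}_d)$ dichotomy, the threshold (which converts from $\theta\in[d/(d+\sqrt d),\,d/(d-\sqrt d)]$ to $t\ge\sqrt d$), and the $\mf{B}(\ell^2)$ answer for infinitely many generators --- is quoted from Ricard. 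Your proposal would instead have to reprove Ricard's theorem, and the sketch does not do that: the existence of a central projection splitting the algebra, the identification of the two summands, and the location of the cutoff are precisely the hard content, and ``working out the arithmetic should return exactly $t^2<d$'' is not an argument.

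Two of your concrete assertions are also wrong and would derail the computation even if the strategy were viable. First, the $\Psi_t$-marginal of $X(p_i^\circ)$ is not ``symmetric Bernoulli'': by Proposition~\ref{Prop:TwoStateFCs}(a) its free cumulants are $t$ times the Boolean cumulants of $p_i^\circ$, so only the second free cumulant survives and the law is semicircular of variance $t/4$ (Theorem~\ref{Thm:C2}(d) with $\alpha=\tfrac12$); the value $(1+t)/4$ you quote is the conditionally free cumulant, not a $\Psi_t$-cumulant. Second, even granting your unjustified claim that some invariant subspace carries ``$d$ free symmetric Bernoullis of trace $1/2$,'' Dykema's results give $\ast_{i=1}^d\bigl(\underset{1/2}{\mf{C}}\oplus\underset{1/2}{\mf{C}}\bigr)\simeq L(\mf{F}_{d/2})$ (the free dimensions $\tfrac12$ add), not $L(\mf{F}_d)$, so the factor in the statement cannot arise that way. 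The final paragraph on $\mf{B}(\mc{H})$ is likewise not a proof: the GNS space of $\Phi_t$ is the Fock space, not $\mc{H}$, so there is no ambient ``type I constraint'' to invoke; the infinite case is again Ricard's and Wysocza\'nski's theorem $\Gamma_{\theta,\infty}=\mf{B}(\ell^2)$, imported through the same identification of joint distributions.
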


Finally, in the case $\mc{B} = L^\infty[0,1]$, we give a complete description of the von Neumann algebra.

\begin{Thm}
\label{Thm:L-infty}
For $t > 0$, $W^\ast(\mc{T}(L^\infty[0,1], dx), \Phi_t) \simeq L(\mf{F}_{1 + 2 t})$.
\end{Thm}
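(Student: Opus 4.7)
The plan is to identify $W^\ast(\mc{T}(L^\infty[0,1], dx), \Phi_t)$ with a compression of a free product via the main theorem, and then apply the Dykema--R\u{a}dulescu free dimension calculus. By the first bullet of the main theorem combined with faithfulness of $\Phi_t$ (applicable since Lebesgue is a faithful state), the joint $\ast$-distribution of $\{X(f) : f \in L^\infty[0,1]^\circ\}$ under $\Phi_t$ coincides with the $(1+t)$'th free convolution power of the joint distribution of $\{f\}$, so the two GNS von Neumann algebras are isomorphic.

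By the Nica--Speicher compression realization of free convolution powers, this joint distribution is obtained as follows. Let $p$ be a projection of trace $\tfrac{1}{1+t}$ generating a copy of $\mf{C}^2$ in free position with respect to $L^\infty[0,1]$, set $M := L^\infty[0,1] \ast \mf{C}^2$, and endow the corner $pMp$ with the renormalized trace $\tau_p = (1+t)\,\tau|_{pMp}$. Then the joint distribution of $\{(1+t)pfp : f \in L^\infty[0,1]^\circ\}$ in $(pMp, \tau_p)$ is exactly the sought free convolution power, and
\[
W^\ast(\mc{T}(L^\infty[0,1], dx), \Phi_t) \cong W^\ast\bigl(pfp : f \in L^\infty[0,1]^\circ\bigr) \subseteq pMp.
\]
A short alternating-words argument in the free product shows this subalgebra is all of $pMp$: since every element of $\mf{C}^2$ is a linear combination of $p$ and $1-p = 1 - p$, every element of $pMp$ can be expanded as a polynomial in $\{pap : a \in L^\infty[0,1]\}$, and centering $a \mapsto a - \int a$ only shifts generators by scalar multiples of $p = 1_{pMp}$.

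The structure of $pMp$ then follows from Dykema's free product additivity and R\u{a}dulescu's compression formula. The free dimension of $\mf{C}^2$ with weights $\bigl(\tfrac{1}{1+t},\tfrac{t}{1+t}\bigr)$ is $\tfrac{2t}{(1+t)^2}$, while that of $L^\infty[0,1]$ is $1$; hence $M \cong L(\mf{F}_r)$ with $r = 1 + \tfrac{2t}{(1+t)^2}$, and the compression formula $L(\mf{F}_r)^s \cong L(\mf{F}_{1 + (r-1)/s^2})$ with $s = \tfrac{1}{1+t}$ yields $pMp \cong L(\mf{F}_{1+2t})$, as claimed.

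The main delicacy is in the range $t < 1$, where $\tau(p) = \tfrac{1}{1+t} > \tfrac{1}{2}$, so $M = L^\infty[0,1] \ast \mf{C}^2$ acquires an atomic summand and Dykema's factor formula needs its corrected version. One would handle this either by applying Dykema's general statement (tracking the atom separately and verifying it is killed by compression by $p$), or by replacing $M$ with the larger factor $L^\infty[0,1] \ast L^\infty[0,1] \cong L(\mf{F}_2)$ in which $p$ may be chosen inside the diffuse second factor; the numerology $1 + 2t$ should emerge the same way under either presentation.
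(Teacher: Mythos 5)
Your argument is essentially the paper's proof: realize $W^\ast(\mc{T}(L^\infty[0,1],dx),\Phi_t)$ as the corner $q\bigl(L^\infty[0,1]\ast W^\ast(q)\bigr)q$ with $\tau(q)=\frac{1}{1+t}$ (this is exactly Proposition~\ref{Prop:Compression} in the paper), identify the free product as $L\bigl(\mf{F}\bigl(1+\frac{2t}{(1+t)^2}\bigr)\bigr)$ via Dykema's free dimension count, and apply the compression formula with $s=\frac{1}{1+t}$ to get $L(\mf{F}_{1+2t})$. The only flaw is your closing ``delicacy'': no atomic summand ever appears, since the free product of the diffuse algebra $L^\infty[0,1]$ with any nontrivial algebra is a factor --- Dykema's Lemma~2.2 gives $L(\mf{F}(1+2\beta(1-\beta)))$ for every $\beta\in(0,1)$ (the paper's ``$\beta\ge\frac12$'' is just a relabeling, and $2\beta(1-\beta)$ is symmetric), so no case split on $t<1$ is needed. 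Moreover, your proposed alternative fix --- replacing $\mf{C}^2$ by a second copy of $L^\infty[0,1]$ and choosing $p$ inside it --- would not work: the Nica--Speicher/compression realization requires the free complement to be exactly $W^\ast(q)\simeq\mf{C}^2$, and compressing $L(\mf{F}_2)$ by $\frac{1}{1+t}$ gives $L(\mf{F}_{1+(1+t)^2})$, not $L(\mf{F}_{1+2t})$, with the corner no longer generated by the compressed copy of the first factor. Simply delete that paragraph and the proof is complete.
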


The paper is organized as follows. In Section~\ref{Section:Construction}, we introduce three Fock space constructions, all of which are particular cases of the general construction in \cite{Ans-Mashburn-Nearest}. We show that in the first construction, for $t=0$ the algebra of operators degenerates to the original probability space $(\mc{B}, \phi)$. Then we pull back the vacuum states of two different constructions to the same tensor algebra. Next, we prove the aforementioned results about different types of cumulants and independence. In Section~\ref{Sec:vN}, we compute the von Neumann algebras.

This article, along with \cite{Ans-Mashburn-Nearest}, forms a part of the second author's Ph.D.~ thesis.

\textbf{Acknowledgements.} The first author is grateful to Zhiyuan Yang for comments leading to Proposition~\ref{Prop:Compression}. The authors are also grateful to the referee for useful comments.

\section{Background}

\subsection{Partitions}

 A \textit{partition} $\pi$ of a subset $S\subset \mathbb{N}$ is a collection of disjoint subsets of $S$ (called $blocks$ of $\pi$) whose union equals $S$. We will use $i \overset{\pi}{\thicksim} j$ to say that $i$ and $j$ are in the same block of $\pi$. In this paper, we will only be concerned with partitions of $[n] :=\{1, 2, \ldots, n\}$.

Let $\mathrm{NC}(n)$ denote the set of \textit{noncrossing partitions} over $[n]$, that is, those partitions $\pi$ such that there are no $i < j < k < \ell$ such that $i \overset{\pi}{\thicksim} k$ and $j \overset{\pi}{\thicksim} \ell$ unless all four are in the same block.

A block $V$ of a noncrossing partition $\pi$ is called inner if for some other block $W \in \pi$, there exist $j \in V$ and $i, k \in W$ such that $i < j < k$. Otherwise $V$ is called outer. Denote $\Inner(\pi)$ the inner blocks of $\pi$, and $\Outer(\pi)$ the outer blocks.


The set $\widetilde{\mathrm{NC}}(n)$ of irreducible partitions is the set of noncrossing partitions with a single outer block, which is denoted by $V_{out}(\pi)$.  $\widetilde{\mathrm{NC}}_{ns}(n)$ is the set of all such partitions with no singleton blocks.

Finally, let $\mathrm{Int}(n)$ denote the \textit{interval partitions} over $[n]$, that is, those partitions $\pi$ such that whenever $i<j$ and $i \overset{\pi}{\thicksim} j$, we have $i \overset{\pi}{\thicksim} k$ for all $i < k < j$.

\subsection{Distributions and von Neumann algebras}

A noncommutative probability space $(\mc{A}, \phi)$ is a unital algebra $\mc{A}$ with a linear functional $\phi$ on it. A noncommutative $\ast$-probability space $(\mc{A}, \phi)$ is a unital $\ast$-algebra $\mc{A}$ with a positive linear functional $\phi$ on it.

For a noncommutative $\ast$-probability space $(\mc{A}, \phi)$, denote by $L^2(\mc{A}, \phi)$ the corresponding GNS Hilbert space. If $\mc{A}$ is represented on $L^2(\mc{A}, \phi)$ by bounded operators (in particular, if $\mc{A}$ is a $C^\ast$-algebra), denote by $W^\ast(\mc{A}, \phi)$ the von Neumann algebra obtained as the weak closure of $\mc{A}$ in this representation.

We will also need the following result, which follows easily from the uniqueness of the GNS construction.

\begin{Lemma}
\label{Lemma:vN-Rep}
Let $(\mc{A}, \phi)$ and $(\mc{B}, \psi)$ be two noncommutative $\ast$-probability spaces, and $F : \mc{A} \rightarrow \mc{B}$ a surjective $\ast$-homomorphism such that $\phi = \psi \circ F$. Then $W^\ast(\mc{A}, \phi) \simeq W^\ast(\mc{B}, \psi)$.
\end{Lemma}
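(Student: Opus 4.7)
The plan is to build an explicit unitary intertwiner between the two GNS Hilbert spaces and conjugate by it. Starting from $F : \mc{A} \rightarrow \mc{B}$, I would define a linear map $U_0 : \mc{A} \rightarrow L^2(\mc{B}, \psi)$ by $U_0(a) = F(a) + N_\psi$, where $N_\psi = \set{b \in \mc{B} : \psi(b^\ast b) = 0}$. The critical observation is that for every $a \in \mc{A}$,
\[
\psi\bigl(F(a)^\ast F(a)\bigr) = \psi\bigl(F(a^\ast a)\bigr) = \phi(a^\ast a),
\]
using that $F$ is a $\ast$-homomorphism intertwining the states. Consequently $U_0$ descends to an isometry $U : L^2(\mc{A}, \phi) \rightarrow L^2(\mc{B}, \psi)$ defined on the dense subspace $\mc{A}/N_\phi$, which extends by continuity to the completion.

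The surjectivity of $F$ enters next: the image $F(\mc{A}) = \mc{B}$ is dense in $L^2(\mc{B}, \psi)$, so $U$ has dense range, and being an isometry, it is a unitary. Moreover, $U$ intertwines the left regular representations, since for $a, b \in \mc{A}$,
\[
U L_a \bigl( b + N_\phi \bigr) = U \bigl( ab + N_\phi \bigr) = F(a) F(b) + N_\psi = L_{F(a)} U \bigl( b + N_\phi \bigr).
\]
Hence $U L_a U^\ast = L_{F(a)}$ for every $a \in \mc{A}$, and again by surjectivity $\set{L_{F(a)} : a \in \mc{A}} = \set{L_b : b \in \mc{B}}$, so conjugation by $U$ maps the image of $\mc{A}$ in $\mc{B}(L^2(\mc{A}, \phi))$ onto the image of $\mc{B}$ in $\mc{B}(L^2(\mc{B}, \psi))$.

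Finally, since $\mr{Ad}(U) : \mc{B}(L^2(\mc{A}, \phi)) \rightarrow \mc{B}(L^2(\mc{B}, \psi))$ is a weakly continuous $\ast$-isomorphism, it carries the weak closure of the image of $\mc{A}$ onto the weak closure of the image of $\mc{B}$, yielding the desired isomorphism $W^\ast(\mc{A}, \phi) \simeq W^\ast(\mc{B}, \psi)$. The only subtle step is the well-definedness of $U_0$ on cosets modulo $N_\phi$, which follows from the identity $\phi = \psi \circ F$ applied to $(a - a')^\ast(a - a')$; the rest is a routine verification, so I expect no real obstacle beyond making sure the hypothesis that $\mc{A}$ acts by bounded operators is in force so that $L_a$ is well-defined on all of $L^2(\mc{A}, \phi)$.
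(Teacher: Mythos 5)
Your argument is correct and is exactly the standard uniqueness-of-GNS argument that the paper invokes without writing out: the identity $\psi(F(a)^\ast F(a)) = \phi(a^\ast a)$ gives a well-defined isometry on cosets, surjectivity of $F$ gives dense range and hence a unitary, and conjugation by that unitary is a WOT homeomorphism intertwining the two representations, so it carries one weak closure onto the other. The only hypothesis to keep in view is the one you already flagged, namely that both algebras act by bounded operators on their GNS spaces so that the von Neumann algebras are defined; with that, there is no gap.
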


\subsection{Independence}

Let $(\mc{A}, \phi)$ be a noncommutative probability space.
\begin{itemize}
\item
Subalgebras $\mc{A}_1, \ldots, \mc{A}_k \subset (\mc{A},\phi)$ are \textit{freely independent}, or \textit{free}, with respect to $\phi$ if whenever
\begin{equation*}
a_i\in\mc{A}_{u(i)}\text{ such that }u(1)\neq u(2)\neq\ldots\neq u(n)\text{ and } \phi[a_i]=0 \,\forall i=1,\ldots,n,\text{ then}
\end{equation*}
\begin{equation*}
\phi[a_1a_2\ldots a_n] = 0.
\end{equation*}
Elements $a_1, \ldots, a_n \in \mc{A}$ are $\ast$-free if the $\ast$-subalgebras they generate are freely independent.
\item
Subalgebras $\mc{A}_1, \ldots, \mc{A}_k \subset (\mc{A},\phi)$ that do not contain the unit are \textit{Boolean independent} with respect to $\phi$ if whenever
\begin{equation*}
	a_i\in\mc{A}_{u(i)}\text{ such that }u(1)\neq u(2)\neq\ldots\neq u(n), \text{ then}
\end{equation*}
\begin{equation*}
	\phi[a_1a_2\ldots a_n] = \phi[a_1]\phi[a_2]\ldots\phi[a_n].
\end{equation*}
Elements $a_1, \ldots, a_n \in \mc{A}$ are $\ast$-Boolean independent if the non-unital $\ast$-subalgebras they generate are freely independent.
\item
Let $\psi$ be a second state on $\mc{A}$. Subalgebras $\mc{A}_1, \ldots, \mc{A}_k \subset \mc{A}$ are \textit{conditionally free} in $(\mc{A}, \phi, \psi)$ if whenever
\[
a_i\in\mc{A}_{u(i)} \text{ such that }u(1)\neq u(2)\neq \ldots\neq u(n) \text{ and } \psi[a_i] = 0 \,\forall i=1,\ldots,n,\text{ then}
\]
\[
\phi[a_1 a_2 \ldots a_n] = \phi[a_1]\phi[a_2]\ldots\phi[a_n].
\]
\end{itemize}

\subsection{Cumulants}

For a family of multilinear functionals $\set{F_n : n \in \mf{N}}$ and a partition $\pi \in \mathrm{NC}(n)$, denote
\[
F_\pi[a_1, \ldots, a_n] = \prod_{V \in \pi} F_{\abs{V}}[a_i : i \in V],
\]
where the argument of each factor are taken in the order of their appearance in the block.

Let $a_1, \ldots, a_n \in \mc{A}$.
\begin{itemize}
\item
Their free cumulants $R^\phi[a_1, \ldots, a_n]$ with respect to $\phi$ are defined recursively via
\begin{equation*}
\phi[a_1, \ldots, a_n] = \sum_{\pi \in \mathrm{NC}(n)} R^\phi_\pi[a_1, \ldots, a_n].
\end{equation*}
\item
Their Boolean cumulants $B^\phi[a_1, \ldots, a_n]$ with respect to $\phi$ are defined recursively via
\begin{equation*}
\phi[a_1, \ldots, a_n] = \sum_{\pi \in \mathrm{Int}(n)} B^\phi_\pi[a_1, \ldots, a_n].
\end{equation*}
\item
Their conditionally free cumulants $R^{\phi, \psi}[a_1, \ldots, a_n]$ with respect to $(\phi, \psi)$ are defined recursively via
\begin{equation*}
\phi[a_1, \ldots, a_n] = \sum_{\pi \in \mathrm{NC}(n)} \left(\prod_{V \in \Inner(\pi)} R^\psi_{\abs{V}} [a_i : i \in V] \right) \left(\prod_{U \in \Outer(\pi)} R^{\phi, \psi}_{\abs{U}} [a_i : i \in U] \right).
\end{equation*}
\end{itemize}

\begin{Thm*}
\

\begin{itemize}
\item
Subalgebras $\mc{A}_1, \ldots, \mc{A}_n\subset (\mc{A},\phi)$ are freely independent if and only if for any $a_i \in \mc{A}_{u(i)}$,
\[
R^\phi[a_1, \ldots, a_n] = 0
\]
unless all $u(1) = u(2) = \ldots$.
\item
Non-unital subalgebras $\mc{A}_1, \ldots, \mc{A}_n\subset (\mc{A},\phi)$ are Boolean independent if and only if for any $a_i \in \mc{A}_{u(i)}$,
\[
B^\phi[a_1, \ldots, a_n] = 0
\]
unless all $u(1) = u(2) = \ldots$.
\item
Suppose the subalgebras $\mc{A}_1, \ldots, \mc{A}_n\subset (\mc{A},\phi, \psi)$ are freely independent in $(\mc{A}, \psi)$. They are conditionally free if and only if for any $a_i \in \mc{A}_{u(i)}$,
\[
R^{\phi, \psi}[a_1, \ldots, a_n] = 0
\]
unless all $u(1) = u(2) = \ldots$.
\end{itemize}
\end{Thm*}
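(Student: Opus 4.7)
The plan is to prove each of the three equivalences by strong induction on $n$, invoking Möbius inversion on the relevant partition lattice --- $\mathrm{NC}(n)$ for the free case, $\mathrm{Int}(n)$ for the Boolean case, and $\mathrm{NC}(n)$ decorated with the inner/outer block distinction for the c-free case. In each setting, the defining moment-cumulant formula is triangular: the single-block partition $\{\{1, \ldots, n\}\}$ contributes $R^\phi_n$, $B^\phi_n$, or $R^{\phi, \psi}_n$ respectively, while every other partition contributes a product of cumulants of strictly smaller size. Consequently cumulants are uniquely determined by moments, and one may pass between the moment-theoretic definition of independence and the cumulant-theoretic characterization.

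For the free case, I would first reduce to centered arguments by multilinearity and the identity $R^\phi_1 = \phi$. For $a_i \in \mc{A}_{u(i)}$ with $u(1) \neq u(2) \neq \ldots \neq u(n)$ and $\phi[a_i] = 0$, freeness gives $\phi[a_1 \cdots a_n] = 0$. Expanding this via the moment-cumulant formula, the inductive hypothesis kills every $R^\phi_\pi$ whose blocks are not monochromatic in $u$, and a standard noncrossing combinatorics argument shows that, for alternating $u$, no partition other than the single block $\{\{1, \ldots, n\}\}$ has all blocks monochromatic. Hence $R^\phi[a_1, \ldots, a_n] = 0$. The converse reverses this expansion: vanishing of mixed cumulants collapses the moment sum onto monochromatic partitions, which reproduces the freeness identity.

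The Boolean case is strictly simpler because in $\mathrm{Int}(n)$ the only interval partition with every block monochromatic in an alternating $u$ is the partition into singletons. The moment-cumulant relation then reads $\phi[a_1 \cdots a_n] = \phi[a_1] \cdots \phi[a_n]$, which is exactly the Boolean factorization, so both directions of the equivalence are immediate from Möbius inversion without any further induction.

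The c-free case is where I expect the main difficulty, and requires a two-layer reduction. The hypothesis of $\psi$-freeness already forces each $R^\psi_{|V|}$ factor in the defining formula to vanish unless the block $V$ is monochromatic in $u$; thus in the sum for the $\phi$-moment, only those $\pi \in \mathrm{NC}(n)$ whose inner blocks are all monochromatic survive. After reducing to $\psi$-centered arguments, an induction analogous to the free case --- now with the outer-block product $\prod_{U \in \Outer(\pi)} R^{\phi, \psi}_{|U|}$ replacing the single cumulant --- shows that the only surviving partition for an alternating mixed tuple is the full block, and hence $R^{\phi, \psi}[a_1, \ldots, a_n] = 0$. The converse inverts this step on outer blocks. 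The delicate point is the simultaneous bookkeeping of $u$-monochromaticity on inner versus outer blocks; essentially the same noncrossing argument used in the free case must be applied at the level of outer-block skeletons after the inner blocks have been absorbed into $\psi$-cumulant factors.
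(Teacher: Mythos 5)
The paper states this theorem in its Background section with no proof at all --- it is standard material quoted from the literature (the free case is Speicher's vanishing-of-mixed-cumulants characterization, see Nica--Speicher, \emph{Lectures on the Combinatorics of Free Probability}, Theorem 11.16; the Boolean and conditionally free cases go back to Speicher--Woroudi and Bo\.{z}ejko--Leinert--Speicher). So your proposal can only be measured against the standard arguments, and as written it has two genuine gaps. The first is that your central combinatorial claim in the free case is misstated: for an alternating tuple with $u(1)\neq u(2)\neq\cdots\neq u(n)$ and $n\geq 2$, the single block $\set{1,\ldots,n}$ is itself \emph{not} monochromatic, and there are many noncrossing partitions all of whose blocks \emph{are} monochromatic --- the all-singletons partition always is, and for $u=(1,2,1)$ so is $\set{\set{1,3},\set{2}}$. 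What your induction actually needs is: every $\pi\in\mathrm{NC}(n)$ other than $1_n$ either contains a singleton (killed by $R^\phi_1[a_i]=\phi[a_i]=0$ after centering) or contains an interval block of size at least two, which is necessarily mixed for alternating $u$ and is killed by the inductive hypothesis; this uses the fact that every noncrossing partition has an interval block. With that repair the alternating case of both directions goes through.

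The second, more serious gap is that the theorem asserts $R^\phi[a_1,\ldots,a_n]=0$ for \emph{every} mixed tuple, not only alternating ones, and your induction on moments of alternating tuples never reaches, say, $R^\phi[a_1,a_2,b]$ with $a_1,a_2\in\mc{A}_1$ and $b\in\mc{A}_2$. The standard bridge is the formula for cumulants with products as entries (Nica--Speicher, Theorem 11.12), which collapses consecutive arguments from the same subalgebra; without it or a substitute, the implication ``free $\Rightarrow$ mixed cumulants vanish'' is not established. The same omission affects the Boolean bullet, which is not ``immediate from M\"obius inversion'': the direction ``Boolean independent $\Rightarrow$ mixed $B^\phi$ vanish'' requires the same induction and the same treatment of non-alternating tuples. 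Finally, in the conditionally free case your reduction ``analogous to the free case'' overlooks that centering is taken with respect to $\psi$, so the outer singleton factors $R^{\phi,\psi}_1[a_i]=\phi[a_i]$ do \emph{not} vanish and the target factorization is $\prod_i\phi[a_i]$ rather than $0$; the partition that survives and produces this product is the all-singletons one, not the full block, and organizing the induction around outer-block skeletons while inner singletons vanish and outer singletons persist is precisely the nontrivial content of the Bo\.{z}ejko--Leinert--Speicher argument rather than a routine repetition of the free case.
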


As usual, we denote by $\boxplus, \uplus$ the (additive) free and Boolean convolutions.

\subsection{Free products}
\label{Sec:Free-product}

We will use the notation from \cite{Dykema-Free-products-hyperfinite}. For von Neumann algebras $A$ and $B$ with traces $\tau_A$ and $\tau_B$, $\underset{\alpha}{A} \oplus \underset{\beta}{B}$, where $\alpha, \beta \geq 0$ and $\alpha + \beta = 1$, will denote the algebra $A \oplus B$ whose associated trace $\tau(a,b) = \alpha \tau_A(a) + \beta \tau_B(b)$. In the case that one of the numbers $\alpha, \beta$ equals zero, say for example $\beta = 0$, we will take $\underset{\alpha}{A} \oplus \underset{\beta}{B}$ to denote the algebra $A$ with trace $\tau_A$. If we also want to give names to the projections corresponding to the identity elements of $A$ and $B$, we will write $\underset{\alpha}{\overset{p}{A}} \oplus \overset{q}{\underset{\beta}{B}}$, meaning $p = (1,0)$ and $q = (0,1)$. Similar notation will apply to direct sums of more that two algebras.

For $r > 1$, Dykema and Radulescu have independently introduced interpolated free group factors $L(\mf{F}_r)$. These are II$_1$ factors such that for $r \in \mf{N}$, $L(\mf{F}_r)$ is the von Neumann algebra of the free group on $r$ generators. We briefly recall the definition.

In a $W^\ast$-noncommutative probability space $(\mc{A}, \tau)$, let $\mc{R}$ be a copy of the hyperfinite II$_1$ factor, and $\omega = \set{X^{(t)} : t \in T}$ a free semicircular family free from $\mc{R}$. Choose self-adjoint projections $p_t \in \mc{R}$ such that $r = \sum_{t \in T} \tau[p_t]^2$. Then $L(\mf{F}_r)$ is isomorphic to $W^\ast(\mc{R}, \set{p_t X^{(t)} p_t : t \in T})$. Then the isomorphism class of $L(\mf{F}_r)$ depends only on $r$ and not on the choice of the projections. Dykema and Radulescu also proved that for any $r, s > 1$, the free product $L(\mf{F}_r) \ast L(\mf{F}_s) \simeq L(\mf{F}_{r+s})$, as well as the compression relation
\[
L(\mf{F}_r)_s = L\left(\mf{F}\left(1 + \frac{r-1}{s^2} \right)\right).
\]
In \cite{Dykema-Free-products-hyperfinite}, Dykema derived formulas for the reduced free products of direct sums of finite dimensional algebras and free group factors, with faithful tracial states. We refer the reader to that paper for specific results, and in particular for the notion of free dimension.

\subsection{Fock spaces with depth two action}

Finally, we recall the main construction from \cite{Ans-Mashburn-Nearest}. It works for a general map $\gamma : \mc{A} \rightarrow \mc{A}$ such that $\gamma + \rho = \gamma + \rho \mb{1}_{\mc{A}}$ is completely positive (semi-definite), but here we only state if for $\gamma$ scalar-valued.

\begin{Construction}
\label{Construction:gamma}
Let $\mc{A}$ be a unital $\ast$-algebra, equipped with star-linear maps $\rho, \gamma : \mc{A} \rightarrow \mf{C}$ and $\Lambda: \mc{A} \otimes_{\text{alg}} \mc{A} \rightarrow \mc{A}$ such that $\rho$ and $\gamma + \rho$ are positive, $\rho$ is faithful, and $\Lambda$ satisfies
\begin{equation}
\label{Eq:a0-symmetric}
\rho[g^\ast \Lambda(b \otimes f)] = \rho[\Lambda(b^\ast \otimes g)^\ast f], \quad \gamma[g^\ast \Lambda(b \otimes f)] = \gamma[\Lambda(b^\ast \otimes g)^\ast f],
\end{equation}
On the algebraic Fock space,
\[
\mc{F}_{alg}(\mc{A}) = \mf{C} \Omega \oplus \bigoplus_{n=1}^\infty \mc{A}^{\otimes n},
\]
define the inner product by the linear extension of
\begin{equation}
\label{Eq:Inner-product}
\ip{f_1 \otimes \ldots \otimes f_n}{g_1 \otimes \ldots \otimes g_k}_{\gamma,\rho} = \delta_{n=k} \rho[g_n^\ast f_n] \prod_{i=1}^{n-1} (\gamma + \rho)[g_{i}^\ast f_i].
\end{equation}
By Remark~3 from \cite{Ans-Mashburn-Nearest}, the inner product is positive definite provided that $\gamma + t \rho$ is still positive for some $t < 1$.
Next, for each $b \in \mc{A}$, consider operators on $\mc{F}_{alg}(\mc{A})$
\[
a^+(b) (f_1 \otimes \ldots \otimes f_n) = b \otimes f_1 \otimes \ldots \otimes f_n,
\]
\[
a^-(b) (f_1 \otimes \ldots \otimes f_n) = (\gamma + \rho)[b f_1] f_2 \otimes \ldots \otimes f_n,
\]
\[
a^-(b) (f_1) = \rho[b f_1] \Omega,
\]
\[
a^0(b) (f_1 \otimes \ldots \otimes f_n) = \Lambda(b \otimes f_1) \otimes  f_2 \otimes \ldots \otimes f_n,
\]
\[
a^-(b) (\Omega) = a^0(b)(\Omega) = 0,
\]
and
\[
X(b) = a^+(b) + a^-(b) + a^0(b).
\]
By Remark~3 from \cite{Ans-Mashburn-Nearest}, $X(b^\ast) = X(b)^\ast$.
\end{Construction}

\begin{Prop}
\label{Prop:AM22}
The properties below follow from the results \cite{Ans-Mashburn-Nearest} by restricting $\gamma : \mc{A} \rightarrow \mc{A}$ to be scalar-valued.
\begin{enumerate}
\item
$\Omega$ is cyclic for the algebra $\mathrm{Alg}_{\mf{C}}(\set{X(b) : b \in \mc{A}})$.
\item
For $V = \set{i(1) < \ldots < i(k)}$, denote
\begin{equation}
\label{Eq:WV}
W(V) = f_{i(1)} \Lambda(f_{i(2)}, \Lambda(f_{i(3)}, \ldots, \Lambda(f_{i(k-1)}, f_{i(k)}))).
\end{equation}
Then for $n\geq 2$ and $f_1, \ldots, f_n\in\mathcal{B}$, with respect to the vacuum state $\ip{\cdot \Omega}{\Omega}$, the moments (Proposition 5) are \begin{equation*}
M_n[X(f_1), \ldots, X(f_n)] = \sum_{\pi\in\mathrm{NC}_{ns}(n)} \prod_{V \in \Inner(\pi)} \rho[W(V)] \prod_{U \in \Outer(\pi)} (\gamma + \rho)[W(U)],
\end{equation*}
the Boolean cumulants (Lemma 7) are
\begin{equation*}
B_n[X(f_1), \ldots, X(f_n)] = \sum_{\pi\in\widetilde{\mathrm{NC}}_{ns}(n)} \rho[W(V_{out}(\pi))] \prod_{V \in \Inner(\pi)} (\gamma + \rho)[W(V)],
\end{equation*}
and the free cumulants (Proposition 9) are
\begin{equation*}
R_n[X(f_1), \ldots, X(f_n)] = \sum_{\pi\in\widetilde{\mathrm{NC}}_{ns}(n)} \rho[W(V_{out}(\pi))] \prod_{V \in \Inner(\pi)} \gamma[W(V)].
\end{equation*}
\item
Proposition 28: If $\mc{A}$ is a $C^\ast$-algebra, for $b \in \mc{A}$,
\[
\norm{a^+(b)}_{\gamma, \rho} = \norm{a^-(b^\ast)}_{\gamma, \rho} \leq \sqrt{\max(\norm{\rho}, \norm{\gamma + \rho})} \ \norm{b}.
\]
\item
Theorem 36: Suppose that $\rho$ is faithful, the inner product \eqref{Eq:Inner-product} is non-generate, and the operators $\set{X(b) : b \in \mc{A}}$ are bounded. Denote
\[
\Gamma_{\gamma, \Lambda}(\mc{A}, \rho) = W^\ast(X(f) : f \in \mc{A}) = W^\ast(X(f) : f \in \mc{A}^{sa}).
\]
Suppose that the following conditions hold:
\begin{equation*}
\label{Eq:Trace-star}
\Lambda(g^\ast \otimes f^\ast)^\ast = \Lambda(f \otimes g),
\end{equation*}
\begin{equation*}
\label{Eq:Trace-Associative}
f \gamma[g h] - \gamma[f g] h = \Lambda(f \otimes \Lambda(g \otimes h)) - \Lambda(\Lambda(f \otimes g) \otimes h),
\end{equation*}
and
\begin{equation*}
\label{Eq:Trace:gamma-self-adjoint}
\gamma[f] \rho[g] = \rho[f] \gamma[g].
\end{equation*}
Then the vacuum vector is cyclic and separating for $\Gamma_{\gamma, \Lambda}(\mc{A}, \rho)$. If in addition $\rho$ is tracial, the vacuum state is tracial on $\Gamma_{\gamma, \Lambda}(\mc{A}, \rho)$.
\end{enumerate}
\end{Prop}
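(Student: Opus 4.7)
The plan is to derive each item of the proposition as a direct specialization of the corresponding result in \cite{Ans-Mashburn-Nearest}, where the construction is carried out for a completely positive $\mc{A}$-valued map $\gamma : \mc{A} \to \mc{A}$. I would begin by identifying a scalar-valued $\gamma : \mc{A} \to \mf{C}$ with the $\mc{A}$-valued map $a \mapsto \gamma[a] \mb{1}_\mc{A}$. Under this identification, the positivity hypothesis on the functional $\gamma + \rho$ becomes the complete positivity requirement of the general construction, and the symmetry identities \eqref{Eq:a0-symmetric} transfer literally. The creation, annihilation, and preservation operators, together with the inner product \eqref{Eq:Inner-product}, are then all special cases of the general objects in \cite{Ans-Mashburn-Nearest}, so everything constructed there still makes sense here.

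For item~(1), cyclicity is immediate from the identity $a^+(f_1) \cdots a^+(f_n) \Omega = f_1 \otimes \cdots \otimes f_n$, which shows that iterated applications of the creation operators span each tensor component; invoking the cited general result suffices. For item~(2), I would take the formulas of Propositions~5, 7, and~9 in \cite{Ans-Mashburn-Nearest}, in which each inner block of the relevant noncrossing or irreducible partition contributes a factor $\gamma[W(V)]$ or $(\gamma + \rho)[W(V)]$ lying a priori in $\mc{A}$, and observe that when $\gamma$ is scalar these contributions are scalar multiples of $\mb{1}_\mc{A}$ which commute freely through the products and reduce the general statements to the ones displayed here. For item~(3), the estimate in Proposition~28 is already phrased in terms of $\norm{\rho}$ and $\norm{\gamma + \rho}$, so it transfers without modification. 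For item~(4), the algebraic hypotheses of Theorem~36 on $\Lambda$, $\rho$, and $\gamma$ all remain meaningful with scalar $\gamma$, and the conclusions — that $\Omega$ is cyclic and separating, and that the vacuum state is tracial when $\rho$ is tracial — carry over verbatim.

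The one point I would write out carefully is that a positive scalar functional $\gamma$, when embedded as the $\mc{A}$-valued map $a \mapsto \gamma[a] \mb{1}_\mc{A}$, is automatically completely positive (not merely positive), so that the hypotheses of the general theorems in \cite{Ans-Mashburn-Nearest} are genuinely met. This is routine because the image lies in the commutative corner $\mf{C} \cdot \mb{1}_\mc{A}$, but it is the only algebraic subtlety worth verifying before simply quoting the cited results; no genuinely new computation is required.
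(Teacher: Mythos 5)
Your proposal is correct and matches the paper's approach exactly: the proposition is stated as a direct specialization of Propositions~5, 7 (Lemma~7), 9, 28, and Theorem~36 of \cite{Ans-Mashburn-Nearest} to scalar-valued $\gamma$, and the paper offers no further argument beyond that citation. Your added observation that a positive scalar functional, viewed as the map $a \mapsto \gamma[a]\mb{1}_{\mc{A}}$, is automatically completely positive is the right (and only) point needing verification, and it is routine as you say.
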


In this paper we will consider the case where $\gamma$ is scalar-valued, and moreover $\gamma$ and $\rho$ are proportional to each other. A priori this gives a two-parameter family, but we show that a particular choice of a one-parameter family is natural.

\begin{Remark}
In the case of $\gamma : \mc{A} \rightarrow \mc{A}$, it is essential that $\mc{A}$ be an algebra. In the scalar-valued construction above, it suffices for $\mc{A}$ to be a vector space such that $\Lambda : \mc{A} \otimes \mc{A} \rightarrow \mc{A}$.

In \cite{Ans-Mashburn-Nearest}, it was also assumed that $\rho$ is faithful. The conclusions in Proposition~\ref{Prop:AM22}(a,b,c) in fact hold without this assumption.
\end{Remark}

\section{Constructions and cumulants}
\label{Section:Construction}

\subsection{States on the tensor algebra}
\label{Subsec:Tensor}

\begin{Defn}
Let $\mc{V}$ be a vector space with an involution $\ast$. Denote by $\mc{T}(\mc{V}) = \mf{C} 1 \oplus \bigoplus_{n=1}^\infty \mc{V}^{\otimes n}$ the tensor algebra of $\mc{V}$, with tensor multiplication and involution. It has the universal property that any $\ast$-linear map from $\mc{V}$ to a $\ast$-algebra $\mc{A}$ has a unique extension to a $\ast$-homomorphism from $\mc{T}(\mc{V})$ to $\mc{A}$.
\end{Defn}

Throughout the paper, let $\mc{B}$ be a unital $\ast$-algebra, and $\phi$ a unital $\ast$-linear functional on $\mc{B}$. Denote $\mc{B}^\circ = \ker \phi$. Then we can form the tensor algebra $\mc{T}(\mc{B}^\circ)$, which (to emphasize the dependence on $\phi$) we will denote typically denote $\mc{T}(\mc{B}, \phi)$.

\begin{Prop}
$\mc{T}(\mc{B}, \phi)$ does not depend on $\phi$.
\end{Prop}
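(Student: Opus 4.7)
The plan is to observe that although $\mc{B}^\circ = \ker\phi$ depends on $\phi$ as a subspace of $\mc{B}$, it is canonically isomorphic to $\mc{B}/\mf{C} 1$ as a $\ast$-vector space, and hence independent of $\phi$ up to canonical isomorphism. Since the tensor algebra is a functor on $\ast$-vector spaces, the result will follow.

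More concretely, I would proceed as follows. Given a unital $\ast$-linear functional $\phi$, the map $f \mapsto f - \phi[f] 1$ is a $\ast$-linear projection of $\mc{B}$ onto $\ker\phi$ with kernel $\mf{C} 1$, so $\mc{B} = \mf{C} 1 \oplus \ker\phi$ as $\ast$-vector spaces, and the quotient map restricts to a $\ast$-linear isomorphism $\ker\phi \to \mc{B}/\mf{C} 1$. Given a second unital $\ast$-linear functional $\psi$, the analogous isomorphism $\ker\psi \to \mc{B}/\mf{C} 1$ lets us compose to obtain an explicit $\ast$-linear isomorphism $\Phi : \ker\phi \to \ker\psi$ given by
\[
\Phi(f) = f - \psi[f] 1.
\]
It is immediate to check that $\Phi$ is $\ast$-linear (since both $\phi$ and $\psi$ are $\ast$-linear and unital), that its image lies in $\ker\psi$, and that the map $g \mapsto g - \phi[g] 1$ from $\ker\psi$ to $\ker\phi$ is a two-sided inverse.

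Finally, invoking the universal property of the tensor algebra stated in the preceding Definition, the $\ast$-linear isomorphism $\Phi$ extends uniquely to a $\ast$-algebra isomorphism $\mc{T}(\ker\phi) \to \mc{T}(\ker\psi)$, that is, $\mc{T}(\mc{B},\phi) \cong \mc{T}(\mc{B},\psi)$. I do not anticipate a genuine obstacle here; the only thing to be careful about is emphasizing that the isomorphism is canonical (coming from the splittings induced by $\phi$ and $\psi$), so that the notation $\mc{T}(\mc{B},\phi)$ is indeed justified as referring to a single object up to canonical identification, even though the embedding $f \mapsto X(f)$ of $\mc{B}^\circ$ into $\mc{T}(\mc{B},\phi)$ of course does depend on $\phi$.
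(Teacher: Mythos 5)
Your proof is correct, but it takes a genuinely different route from the paper's. You identify both $\ker\phi$ and $\ker\psi$ with the quotient $\mc{B}/\mf{C}1$ via the splittings $f \mapsto f - \phi[f]1$, obtain the explicit $\ast$-linear isomorphism $f \mapsto f - \psi[f]1$ from $\ker\phi$ to $\ker\psi$, and then invoke functoriality of the tensor algebra; the resulting isomorphism $\mc{T}(\ker\phi) \to \mc{T}(\ker\psi)$ preserves tensor degree. The paper instead constructs a single $\phi$-independent model, the reduced tensor algebra $\mc{T}_r(\mc{B})$ obtained from $\mc{T}(\mc{B})$ by the relation that deletes tensor factors equal to $1_{\mc{B}}$, and exhibits an explicit isomorphism $\mc{T}_r(\mc{B}) \to \mc{T}(\mc{B}^\circ)$ by the ``decentering'' formula $f_1 \otimes \cdots \otimes f_n \mapsto \sum_{S \subset [n]} \prod_{i \in S}\phi[f_i] \bigotimes_{j \notin S} f_j^\circ$, which mixes tensor degrees. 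The two identifications of $\mc{T}(\mc{B},\phi)$ with $\mc{T}(\mc{B},\psi)$ one gets are different maps already in degree one (yours sends $f \mapsto f - \psi[f]1_{\mc{B}}$, the paper's composite sends $f \mapsto \psi[f]\cdot 1 + (f - \psi[f]1_{\mc{B}})$), though either one proves the proposition as stated. What the paper's version buys is the bookkeeping used in the sequel: in $\mc{T}_r(\mc{B})$ one may write $X(f)$ for arbitrary $f \in \mc{B}$, with $X(1_{\mc{B}}) = 1$ and $X(f) = \phi[f] + X(f^\circ)$, which is exactly the normalization needed when the states $\Phi_t$, $\Psi_t$ and the Fock representations are introduced. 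Your version is arguably cleaner and more conceptual for the bare independence statement, and you correctly flag the one subtlety, namely that only the isomorphism class, not the embedding of $\mc{B}^\circ$, is independent of $\phi$.
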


\begin{proof}
Take the quotient of the tensor algebra $\mc{T}(\mc{B})$ by the relation
\begin{equation}
\label{Eq:relation}
f_1 \otimes \ldots \otimes f_{i-1} \otimes 1_{\mc{B}} \otimes f_{i+1} \otimes \ldots \otimes f_n = f_1 \otimes \ldots \otimes f_{i-1} \otimes f_{i+1} \otimes \ldots \otimes f_n,
\end{equation}
for each $n$ and each $1 \leq i \leq n$. We denote the resulting algebra by $\mc{T}_{r}(\mc{B})$ and call it the reduced tensor algebra of $\mc{B}$. We now claim that for any $\phi$, $\mc{T}(\mc{B}^\circ) = \mc{T}(\mc{B}, \phi)$ is $\ast$-isomorphic to $\mc{T}_{r}(\mc{B})$. Indeed, the linear extension of the map $\mc{T}_{r}(\mc{B}) \rightarrow \mc{T}(\mc{B}^\circ)$,
\[
f_1 \otimes \ldots \otimes f_n \mapsto \sum_{S \subset [n]} \prod_{i \in S} \phi[f_i] \bigotimes_{j \not \in S} f_i^\circ
\]
is well defined on $\mc{T}_{r}(\mc{B})$, and is a bijective $\ast$-homomorphism.
\end{proof}

To distinguish elements $f \in \mc{B}$ from their counterparts in $\mc{T}_r(\mc{B})$, we will denote the element $f_1 \otimes \ldots \otimes f_n \in \mc{T}_r(\mc{B})$ by $X(f_1) \ldots X(f_n)$, with $X(1_{\mc{B}}) = 1$. This notation is consistent with the multiplication, involution, and relation on $\mc{T}_r(\mc{B})$, as well as the isomorphism with any $\mc{T}(\mc{B}^\circ) = \mc{T}(\mc{B}, \phi)$.

\begin{Defn}
\label{Defn:Phi_t}
For $t \geq 0$, define two families of states on $\mc{T}(\mc{B}^\circ) = \mc{T}(\mc{B}, \phi)$ by
\begin{equation}
\label{Eq:Phi_t}
\Phi_t[X(f_1) \ldots X(f_n)] = \sum_{\pi\in\mathrm{NC}_{ns}(n)} (1 + t)^{\abs{\Outer(\pi)}} t^{\abs{\Inner(\pi)}} \prod_{V \in \pi} \phi[W(V)],
\end{equation}
and
\begin{equation}
\label{Eq:Psi_t}
\Psi_t[X(f_1) \ldots X(f_n)] = \sum_{\pi\in\mathrm{NC}_{ns}(n)} t^{\abs{\pi}} \prod_{V \in \pi} \phi[W(V)].
\end{equation}
\end{Defn}

We will have much better descriptions of these states in terms of their free, Boolean, and conditionally free cumulants.

\subsection{Three Fock space constructions}

\begin{Construction}
\label{Constr:Centered}
Let $(\mc{B}, \phi)$ be a noncommutative $\ast$-probability space, so that $\mc{B}$ is a unital $\ast$-algebra, and $\phi$ is a (not necessarily faithful) state on it. Using the GNS construction for $\mc{B}$, we may construct a pointed Hilbert space $\mc{H}$ (that is, a Hilbert space with a unit vector $\Omega$) such that
\begin{itemize}
\item
$\mc{B}$ acts on  $\mc{H}$ by densely defined, possibly unbounded operators whose domains contain $\Omega$.
\item
$\Omega$ is cyclic for $\mc{B}$, and $\phi = \ip{\cdot \Omega}{\Omega}$.
\end{itemize}
Denote by $\mc{H}^\circ$ the orthogonal complement in $\mc{H}$ of $\mf{C} \Omega$. Note that for any $f \in \mc{B}$, $f \Omega - \phi[f] \Omega \in \mc{H}^\circ$. Denote
\[
\mc{B}^\circ = \set{g \in \mc{B} : \phi[g] = 0},
\]
so that $\mc{B} = \mc{B}^\circ \oplus \mf{C}$. Finally denote
\begin{equation}
\label{Eq:Lambda}
\Lambda(f, g) = f g - \phi[f g].
\end{equation}
Fix $t \geq 0$. On the Fock space $\mc{F}(\mc{H}^\circ, t) = \mf{C} \Omega \oplus \bigoplus_{n=1}^\infty (\mc{H}^\circ)^{\otimes n}$, define the inner product by the linear extension of
\[
\ip{\xi_1 \otimes \ldots \otimes \xi_n}{\eta_1 \otimes \ldots \otimes \eta_n}_t = \delta_{n=k} (t+1) t^{n-1} \prod_{i=1}^n \ip{\xi_i}{\eta_i}
\]
For $f \in \mc{B}^\circ$, define
\begin{align*}
a^+(f)(\xi_1 \otimes \ldots \otimes \xi_n) & = (f \Omega) \otimes \xi_1 \otimes \ldots \otimes \xi_n, \quad a^+(f)(\Omega) = f \Omega, \\
a^-_\phi(f)(\xi_1 \otimes \ldots \otimes \xi_n) & = \ip{f \xi_1}{\Omega} \xi_2 \otimes \ldots \otimes \xi_n, \\
a^-(f) & = \begin{cases}
t a^-_\phi(f) & \text{ on } (\mc{H}^\circ)^{\otimes n}, n \geq 2, \\
(1 + t) a^-_\phi(f) & \text{ on } (\mc{H}^\circ)^{\otimes n}, n = 1, \\
0 & \text{ on } (\mc{H}^\circ)^{\otimes n}, n = 0,
\end{cases} \\
a^0(f) (\xi_1 \otimes \ldots \otimes \xi_n) & = (f \xi_1 - \ip{f \xi_1}{\Omega} \Omega) \otimes \xi_2 \otimes \ldots \otimes \xi_n, \quad a^0(f) \Omega = 0.
\end{align*}
For $f \in \mc{B}^\circ$, denote
\[
X(f, t) = a^+(f) + a^-(f) + a^0(f).
\]
For $f \in \mc{B}$, denote $X(f,t) = X(f - \phi[f], t) + \phi[f]$.
\end{Construction}

The specific choice of the annihilation operator (and inner product) is justified in Remark~\ref{Remark:Key-properties} and \ref{Prop:s-neq-t}.

\begin{Notation}
Denote
\[
\Gamma^\Phi_{a}(\mc{B}, \phi; t) = \mathrm{Alg}_{\mf{C}}(\set{X(f,t) : f \in \mc{B}^\circ}, 1)
= \mathrm{Alg}_{\mf{C}}(\set{X(f,t) : f \in (\mc{B}^\circ)^{sa}}, 1).
\]
Denote by
\[
\Gamma^\Phi_{w}(\mc{B}, \phi; t) = W^\ast(\set{X(f,t) : f \in \mc{B}^\circ}, 1)
\]
the corresponding von Neumann algebra. The vacuum state $\Phi = \ip{\cdot \Omega}{\Omega}$ is a state on each of these algebras.
\end{Notation}

\begin{Remark}
\label{Remark:Key-properties}
Except for $\mc{B}^\circ$ not necessarily being an algebra, and the linear functional not necessarily being faithful, the construction above fits into the framework of \cite{Ans-Mashburn-Nearest}, with the linear functional $\rho = (1 + t) \phi$, the map $\gamma = -\phi$, and $\Lambda(f \otimes g) = f g - \phi[f g]$. Therefore the following properties are either clear, or can be read off Proposition~\ref{Prop:AM22}.

\begin{enumerate}
\item
The inner product $\ip{\cdot}{\cdot}_t$ is non-degenerate for $t > 0$.
\item
Each $X(f^\ast, t) = X(f, t)^\ast$ in the natural sense.
\item
$\Omega$ is cyclic for $\Gamma^\Phi_{a}(\mc{B}, \phi; t)$.
\item
For $n \geq 2$ and $f_1, \ldots, f_n\in\mathcal{B}^\circ$, the moments with respect to $\Phi$ are
\begin{equation}
\label{Eq:Moments}
\Phi[X(f_1, t), \ldots, X(f_n, t)] = \sum_{\pi\in\mathrm{NC}_{ns}(n)} (1 + t)^{\abs{\Outer(\pi)}} t^{\abs{\Inner(\pi)}} \prod_{V \in \pi} \phi[W(V)],
\end{equation}
Boolean cumulants with respect to $\Phi$ are
\begin{equation}
\label{Eq:Boolean-cumulants}
B^\Phi[X(f_1, t), \ldots, X(f_n, t)] =  (1 + t) \sum_{\pi\in\widetilde{\mathrm{NC}}_{ns}(n)} t^{\abs{\pi} - 1} \prod_{V \in \pi} \phi[W(V)],
\end{equation}
and the free cumulants with respect to $\Phi$ are
	\begin{equation}
\label{Eq:Free-cumulants}
	R^\Phi[X(f_1, t), \ldots, X(f_n, t)] = (1 + t) \sum_{\pi\in\widetilde{\mathrm{NC}}_{ns}(n)} (-1)^{\abs{\pi} - 1} \prod_{V \in \pi} \phi[W(V)].
	\end{equation}
\item
If $\mc{B}$ is a $C^\ast$-algebra, then, denoting by $\norm{\cdot}_t$ the operator norm on $\mc{F}(\mc{H}^\circ, t)$, we have $\norm{X(f,t)}_t \leq 2(1 + \sqrt{1 + t}) \norm{f}_\mc{B}$. Indeed, Proposition~\ref{Prop:AM22}(c) gives
\begin{equation*}
\norm{a^+(f)}_t = \norm{a^-(f)}_t \leq \sqrt{1 + t} \norm{f}_\mc{B},
\end{equation*}
and it is not hard to check that $\norm{a^0(f)}_t \leq \norm{f}_\mc{B}$.
\item
If $t > 0$, $\phi$ is faithful, and $\mc{B}$ is a $C^\ast$-algebra, the assumptions in Proposition~\ref{Prop:AM22}(d) hold. The key calculation is that for $f, g, h \in \mc{B}^\circ$,
\[
\Lambda(f \otimes \Lambda(g \otimes h)) - \Lambda(\Lambda(f \otimes g) \otimes h)
= - f \phi[g h] + \phi[f g] h.
\]
Therefore in this case, $\Omega$ is cyclic and separating for $\Gamma^\Phi_{w}(\mc{B}, \phi; t)$. If $\phi$ is tracial, then $\Phi$ is tracial.
\end{enumerate}
\end{Remark}

Next we consider the case $t=0$. Compare with Proposition~4.8 in \cite{Belinschi-Nica-Eta} or Proposition~A.9 in \cite{AnsBoolean}.

\begin{Prop}
\label{Prop:t=0}
The noncommutative $\ast$-probability spaces $(\Gamma^\Phi_a(\mc{B}, \phi; 0), \Phi)$ and $(\mc{B}, \phi)$ are isomorphic. If $\mc{B}$ is a von Neumann algebra, then $\Gamma^\Phi_a(\mc{B}, \phi; 0) = \Gamma^\Phi_w(\mc{B}, \phi; 0)$.
\end{Prop}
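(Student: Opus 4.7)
The key observation is that at $t=0$, the inner product $\langle\cdot,\cdot\rangle_0$ on $\mc{F}(\mc{H}^\circ, 0)$ collapses: the prefactor $(1+t)t^{n-1}$ vanishes for all $n\geq 2$, so $\langle\xi_1\otimes\dots\otimes\xi_n, \eta_1\otimes\dots\otimes\eta_n\rangle_0 = 0$ for $n\geq 2$, while on the length-$0$-or-$1$ piece the inner product coincides with the original one on $\mc{H}$. Thus the GNS Hilbert space of $(\Gamma^\Phi_a(\mc{B},\phi;0), \Phi)$ is naturally identified with $\mc{H} = \mf{C}\Omega \oplus \mc{H}^\circ$ via the quotient by the null subspace $\mc{N} := \bigoplus_{n\geq 2}(\mc{H}^\circ)^{\otimes n}$.

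Next I would verify directly from Construction~\ref{Constr:Centered} that every operator $X(f,0)$ maps $\mc{N}$ into itself. At $t=0$ the annihilation $a^-(f)$ vanishes on $(\mc{H}^\circ)^{\otimes n}$ for $n\geq 2$, while $a^+(f)$ raises tensor degree by one (staying in $\mc{N}$) and $a^0(f)$ keeps tensor degree (staying in $\mc{N}$). A short computation then shows that the induced operator $\bar X(f,0)$ on the quotient $\mc{H}$ satisfies $\bar X(f,0)\Omega = f\Omega$ and, for $\xi\in\mc{H}^\circ$,
\[
\bar X(f,0)\xi = (\tilde f\,\Omega)\otimes\xi + f\xi \;\equiv\; f\xi \pmod{\mc{N}},
\]
so $\bar X(f,0)$ is just the action of $f$ on $\mc{H}$ via the GNS representation $\pi_\phi$.

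Consequently, the quotient map $T\mapsto \bar T$ gives a well-defined $\ast$-homomorphism $\Gamma^\Phi_a(\mc{B},\phi;0) \to \pi_\phi(\mc{B})\subset\mc{L}(\mc{H})$, which is surjective (by $\mc{B} = \mc{B}^\circ \oplus \mf{C}$) and intertwines the vacuum state $\Phi$ with $\langle\,\cdot\,\Omega,\Omega\rangle = \phi$. Under the standard identification of $\mc{B}$ with its image $\pi_\phi(\mc{B})$ in its GNS representation, the map $f\mapsto X(f,0)$ therefore furnishes the desired isomorphism $(\Gamma^\Phi_a(\mc{B},\phi;0), \Phi) \simeq (\mc{B}, \phi)$ of $\ast$-probability spaces. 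For the second claim, if $\mc{B}$ is a von Neumann algebra in standard form on $\mc{H}$, then $\pi_\phi(\mc{B})$ is already weakly closed in $\mc{L}(\mc{H})$, so the algebra $\Gamma^\Phi_a$ coincides with its weak closure $\Gamma^\Phi_w$; alternatively this is a direct application of Lemma~\ref{Lemma:vN-Rep} to the surjective $\ast$-homomorphism just constructed.

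The main subtlety is that for $f,g\in\mc{B}^\circ$ the operators $X(f,0)X(g,0)$ and $X(fg,0)$ are \emph{not} equal on $\mc{F}(\mc{H}^\circ,0)$ -- their difference is $(f\Omega)\otimes(g\Omega)$ plus further contributions in $\mc{N}$ -- so the identification of the two $\ast$-algebras is genuinely an identification at the level of the GNS quotient by $\mc{N}$. Establishing that $\mc{N}$ is invariant under every generator $X(f,0)$ (and hence under the whole algebra), so that this quotient is a $\ast$-algebra homomorphism, is the one step that requires care; everything else is read off from the formulas of Construction~\ref{Constr:Centered} at $t=0$.
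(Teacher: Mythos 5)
Your argument is correct and is essentially the paper's own proof, spelled out in more detail: the paper likewise observes that at $t=0$ the degenerate inner product reduces the Fock space to $\mf{C}\Omega\oplus\mc{H}^\circ\simeq\mc{H}$ and that a short calculation gives $X(f,0)\xi=f\xi$ and $\ip{X(f,0)\Omega}{\Omega}=\phi[f]$. Your explicit verification that the null space $\mc{N}=\bigoplus_{n\geq 2}(\mc{H}^\circ)^{\otimes n}$ is invariant under each $X(f,0)$, and your remark that $X(f,0)X(g,0)\neq X(fg,0)$ before passing to the quotient, are exactly the content of the paper's "short calculation" and "the remaining claims follow."
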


\begin{proof}
If $t = 0$, the Fock space is simply $\mf{C} \Omega \oplus \mc{H}^\circ \simeq \mc{H}$. A short calculation shows that for $f \in \mc{B}$ and $\xi \in \mc{H}$, $X(f, 0) \xi = f \xi$, and $\ip{X(f, 0) \Omega}{\Omega} = \phi[f]$. The remaining claims follow.
\end{proof}

\begin{Cor}
\label{Cor:Boolean-cumulants}
Let $(\mc{B}, \phi)$ be a unital noncommutative probability space. Denote $\Lambda(f, g) = f g - \phi[f g]$. Then the Boolean cumulants of $f_1, \ldots, f_n \in \mc{B}^\circ$ are $B^\phi[f_1, f_2] = \phi[f_1 f_2]$ and
\[
B^\phi[f_1, \ldots, f_n] = \phi[f_1 \Lambda(f_2, \Lambda(f_3, \ldots, \Lambda(f_{n-1}, f_n)))].
\]
\end{Cor}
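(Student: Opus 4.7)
The plan is to derive this directly from the specialization $t=0$ of formula \eqref{Eq:Boolean-cumulants} in Remark~\ref{Remark:Key-properties}, combined with Proposition~\ref{Prop:t=0}. There should be essentially no calculation to do; the content has been packaged into the two preceding results.

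First, I would invoke Proposition~\ref{Prop:t=0}: the map $X(f,0) \mapsto f$ realizes an isomorphism of noncommutative $\ast$-probability spaces $(\Gamma^\Phi_a(\mc{B}, \phi; 0), \Phi) \simeq (\mc{B}, \phi)$. Since Boolean cumulants are determined by joint moments through Möbius inversion over interval partitions, this isomorphism transports cumulants: $B^\Phi[X(f_1, 0), \ldots, X(f_n, 0)] = B^\phi[f_1, \ldots, f_n]$ for all $f_i \in \mc{B}^\circ$.

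Next, I would set $t = 0$ on the right-hand side of \eqref{Eq:Boolean-cumulants}. The factor $(1+t)$ becomes $1$, and $t^{|\pi|-1}$ becomes $0^{|\pi|-1}$, which (with the convention $0^0 = 1$) vanishes except when $|\pi| = 1$. Among irreducible noncrossing partitions in $\widetilde{\mathrm{NC}}_{ns}(n)$ with a single block, the only option is the full block $\pi = \{[n]\}$, which is permitted for every $n \geq 2$ since it contains no singletons. Hence the sum collapses to the single term $\phi[W([n])]$.

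Finally, I would read off the explicit form of $W([n])$ from \eqref{Eq:WV}: for $n = 2$ the iterated $\Lambda$-expression degenerates to the product $f_1 f_2$, yielding $B^\phi[f_1,f_2] = \phi[f_1 f_2]$, while for $n \geq 3$ we obtain exactly the nested expression $\phi[f_1 \Lambda(f_2, \Lambda(f_3, \ldots, \Lambda(f_{n-1}, f_n)))]$ as claimed. There is no genuine obstacle; the only point requiring a line of justification is that the isomorphism of Proposition~\ref{Prop:t=0} preserves Boolean cumulants, which is immediate from their definition in terms of joint moments.
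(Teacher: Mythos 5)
Your argument is correct and is exactly the paper's proof: the authors likewise prove Corollary~\ref{Cor:Boolean-cumulants} by setting $t=0$ in equation~\eqref{Eq:Boolean-cumulants}, with Proposition~\ref{Prop:t=0} identifying $(\Gamma^\Phi_a(\mc{B},\phi;0),\Phi)$ with $(\mc{B},\phi)$. You simply spell out the collapse of the sum to the single-block partition, which the paper leaves implicit.
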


\begin{proof}
Apply equation~\eqref{Eq:Boolean-cumulants} with $t=0$.
\end{proof}

\begin{Construction}
\label{Constr:Psi}
Let $\mc{B}, \phi, \mc{H}$ be as in Construction~\ref{Constr:Centered}. On $\mc{F}(\mc{H}^\circ, t)$, define the simpler inner product
\[
\ip{\xi_1 \otimes \ldots \otimes \xi_n}{\eta_1 \otimes \ldots \otimes \eta_n}_t = \delta_{n=k} t^{n} \prod_{i=1}^n \ip{\xi_i}{\eta_i}.
\]
For $f \in \mc{B}^\circ$, define $a^+(f)$ and $a^0(f)$ as in Construction~\ref{Constr:Centered}, and $a^-(f) = t a_\phi^-(f)$. Denote
\[
Y(f,t) = a^+(f) + a^-(f) + a^0(f)
\]
as before, and let $\Psi$ be the corresponding vacuum state.

This construction also fits into the framework of \cite{Ans-Mashburn-Nearest}, with the linear functional $\rho = t \phi$ and the map $\gamma = 0$. Therefore using Proposition~\ref{Prop:AM22} again,
\begin{itemize}
\item
The inner product $\ip{\cdot}{\cdot}_t$ is non-degenerate for $t > 0$.
\item
Each $Y(f^\ast, t) = Y(f, t)^\ast$ in the natural sense.
\item
$\Omega$ is cyclic for $\Gamma^\Psi_{a}(\mc{B}, \phi; t) = \mathrm{Alg}_{\mf{C}}(\set{Y(f,t) : f \in \mc{B}^\circ}, 1)$.
\item
For $n \geq 0$ and $f_1, \ldots, f_n \in \mc{B}^\circ$, the moments with respect to $\Psi$ are
\begin{equation}
\label{Eq:Moments-Psi}
\Psi[Y(f_1, t) \ldots Y(f_n, t)] = \sum_{\pi\in\mathrm{NC}_{ns}(n)} t^{\abs{\pi}} \prod_{V \in \pi} \phi[W(V)],
\end{equation}
and the free cumulants with respect to $\Psi$ are
\begin{equation}
\label{Eq:Free-cumulants-Psi}
R^\Psi[Y(f_1, t), \ldots, Y(f_n, t)] = t \phi[W(1, 2, \ldots, n)] = t \phi[f_1 \Lambda(f_2, \Lambda(f_3, \ldots, \Lambda(f_{n-1}, f_n)))].
\end{equation}
\item
If $\mc{B}$ is a $C^\ast$-algebra, then $\norm{Y(f,t)}_t \leq 2(1 + \sqrt{t}) \norm{f}_\mc{B}$.
\end{itemize}
\end{Construction}

\begin{Lemma}
\label{Lemma:Nontracial}
Suppose $\phi$ is faithful. Unless $\dim \mc{B}^\circ \leq 1$ or $\phi$ is a homomorphism, $\Psi$ is not tracial.
\end{Lemma}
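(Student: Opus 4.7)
The plan is to assume for contradiction that $\Psi$ is tracial on $\mc{T}(\mc{B}, \phi)$ (with $t > 0$) and derive an equality in Cauchy--Schwarz that contradicts faithfulness of $\phi$ once $\dim_\mf{C} \mc{B}^\circ \geq 2$. Since traciality of $\Psi$ makes the joint moments of the $Y(f, t)$'s cyclically invariant, the joint free cumulants $R^\Psi_n$ are cyclically invariant as well (by standard M\"obius inversion on $\mathrm{NC}(n)$); I will work with the latter, using the explicit formula \eqref{Eq:Free-cumulants-Psi}.

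At order two, $R^\Psi_2[Y(f), Y(g)] = t\phi[fg]$, so cyclic invariance gives $\phi[fg] = \phi[gf]$ for all $f, g \in \mc{B}^\circ$; writing any $a \in \mc{B}$ as $a^\circ + \phi[a]$, this extends to traciality of $\phi$ on $\mc{B}$. At order four, expanding $\Lambda(a, b) = ab - \phi[ab]$ twice and using $\phi[f_i] = 0$ gives
\[
R^\Psi_4[Y(f_1), \ldots, Y(f_4)] = t\bigl(\phi[f_1 f_2 f_3 f_4] - \phi[f_1 f_2]\,\phi[f_3 f_4]\bigr).
\]
Comparing with the cyclic shift $R^\Psi_4[Y(f_2), Y(f_3), Y(f_4), Y(f_1)]$ and cancelling the four-fold moments by traciality of $\phi$, I obtain the identity
\[
\phi[f_1 f_2]\,\phi[f_3 f_4] = \phi[f_2 f_3]\,\phi[f_4 f_1]
\]
for all $f_1, \ldots, f_4 \in \mc{B}^\circ$.

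The decisive step is to specialize $f_1 = f_2 = f$ and $f_3 = f_4 = g$ with $f, g \in (\mc{B}^\circ)^{sa}$. Traciality of $\phi$ makes $\phi[fg]$ real and equal to $\phi[gf]$, so the identity collapses to $\phi[f^2]\,\phi[g^2] = \phi[fg]^2$, which is equality in the Cauchy--Schwarz inequality for the GNS inner product on $\mc{B}$. Faithfulness of $\phi$ makes the GNS map $\mc{B} \hookrightarrow L^2(\mc{B}, \phi)$ injective, so equality forces $f$ and $g$ to be $\mf{C}$-linearly dependent in $\mc{B}$. If $\dim_\mf{C} \mc{B}^\circ \geq 2$, then since $\mc{B}^\circ = \ker \phi$ is $\ast$-invariant, its self-adjoint part has real dimension at least $2$, and I can pick two $\mf{R}$-linearly (hence $\mf{C}$-linearly) independent self-adjoint elements there, producing the contradiction. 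To close the ``homomorphism'' alternative, I note that under faithfulness a $\ast$-homomorphism $\phi : \mc{B} \to \mf{C}$ satisfies $\phi[a^\ast a] = |\phi[a]|^2$, which vanishes only when $a = 0$, forcing $\mc{B} \cong \mf{C}$ and hence $\dim \mc{B}^\circ = 0$; so that clause is automatically subsumed by $\dim \mc{B}^\circ \leq 1$.

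The only potentially non-routine ingredient is the cyclic invariance of free cumulants of tracial states (equivalent to doing the calculation at the level of moments via \eqref{Eq:Moments-Psi}); either route is elementary and gives the same identity on $\phi$, after which the Cauchy--Schwarz rigidity argument finishes the proof.
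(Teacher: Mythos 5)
Your proof is correct, but it takes a genuinely different route from the paper's. The paper deduces the lemma in two lines from the traciality criterion of Theorem~36 of the companion paper (quoted here as Proposition~\ref{Prop:AM22}(d)): with $\gamma = 0$ the identity $f\gamma[gh] - \gamma[fg]h = \Lambda(f\otimes\Lambda(g\otimes h)) - \Lambda(\Lambda(f\otimes g)\otimes h)$ forces $\phi[fg]h = f\phi[gh]$ for all $f,g,h \in \mc{B}^\circ$, so once $\dim\mc{B}^\circ \geq 2$ linear independence gives $\phi[fg]=0$ identically, making $\mc{B}^\circ$ a subalgebra and $\phi$ a homomorphism. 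You instead work directly from the explicit cumulant formula \eqref{Eq:Free-cumulants-Psi}, use cyclic invariance of free cumulants under a trace (a standard fact, correctly invoked), extract the order-four identity $\phi[f_1f_2]\phi[f_3f_4] = \phi[f_2f_3]\phi[f_4f_1]$, and finish with Cauchy--Schwarz rigidity; your degree-two and degree-four computations check out, including the cancellation of the $t^2$ terms if one works at the level of moments instead. What the paper's route buys is brevity and an exact identification of the obstruction (traciality can only survive when $\phi$ is multiplicative, which is why that escape clause appears in the statement); what yours buys is self-containedness -- you do not need the ``only if'' direction of the companion paper's theorem, which is not actually stated in Proposition~\ref{Prop:AM22}(d) as quoted here -- plus the observation, which is correct, that under faithfulness the homomorphism alternative collapses into $\dim\mc{B}^\circ = 0$, so your conclusion is formally a bit stronger. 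Do make the hypothesis $t>0$ explicit (as you did parenthetically), since at $t=0$ the state $\Psi$ degenerates and is trivially tracial.
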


\begin{proof}
Applying Proposition~\ref{Prop:AM22}(d) with $\gamma = 0$ and $\Lambda(f \otimes g) = f g - \phi[f g]$, the vacuum state is tracial only if $\phi$ is tracial and for any $f, g, h \in \mc{B}^\circ$, $\phi[f g] h - f \phi[g h] = 0$. Suppose $\dim \mc{B}^\circ \geq 2$. Let $f, h \in \mc{B}^\circ$ be linearly independent. Then for any $g \in \mc{B}^\circ$, $\phi[f g] = 0$, in other words $\mc{B}^\circ$ is a subalgebra. This implies that $\phi$ is a homomorphism.
\end{proof}

\begin{Thm}
\label{Thm:pi-faithful}

For $t \geq 0$, define the maps $\pi_{\Phi, t} : \mc{B} \rightarrow \Gamma^\Phi_a(\mc{B}, \phi; t)$ and $\pi_{\Psi, t} : \mc{B} \rightarrow \Gamma^\Psi_a(\mc{B}, \phi; t)$ by
\begin{align*}
\pi_{\Phi, t} : f & \mapsto X(f, t) \\
\pi_{\Psi, t}: f & \mapsto Y(f, t)
\end{align*}

\begin{enumerate}
\item
$\pi_{\Phi, t}$ and $\pi_{\Psi, t}$ extend to $\ast$-representations of the tensor algebra $\mc{T}(\mc{B}, \phi)$ on the Fock spaces from Constructions~\ref{Constr:Centered} and \ref{Constr:Psi}, respectively.
\item
$\Phi_t = \Phi \circ \pi_{\Phi, t}$ and $\Psi_t = \Psi \circ \pi_{\Psi, t}$. In particular, we have the equality of joint distributions
\[
\mu^\Phi_{X(f_1, t), \ldots X(f_n, t)} = \mu^{\Phi_t}_{X(f_1), \ldots X(f_n)}.
\]
\item
$W^\ast(\mc{T}(\mc{B}, \phi), \Phi_t) \simeq \Gamma^\Phi_w(\mc{B}, \phi; t)$ from Construction~\ref{Constr:Centered}, while $W^\ast(\mc{T}(\mc{B}, \phi), \Psi_t) \simeq \Gamma^\Psi_w(\mc{B}, \phi; t)$ from Construction~\ref{Constr:Psi}.
\item
If $t > 0$ and $\phi$ is faithful, the representation $\pi_{\Phi, t}$ is faithful.
\item
If $t > 0$ and $\phi$ is faithful, the state $\Phi_t$ is faithful.
\end{enumerate}
\end{Thm}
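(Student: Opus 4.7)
The plan is to dispatch parts (a)--(c) from first principles plus the earlier material, and then reduce (d) and (e) to a single Fock-space computation. For (a), the universal property of the tensor algebra ensures that the $\ast$-linear maps $f \mapsto X(f, t)$ and $f \mapsto Y(f, t)$ on $\mc{B}^\circ$ extend uniquely to $\ast$-homomorphisms out of $\mc{T}(\mc{B}^\circ) = \mc{T}(\mc{B}, \phi)$; $\ast$-linearity on $\mc{B}^\circ$ comes from Remark~\ref{Remark:Key-properties}(b) and its analogue in Construction~\ref{Constr:Psi}. For (b), the formulas \eqref{Eq:Phi_t} and \eqref{Eq:Psi_t} defining $\Phi_t$ and $\Psi_t$ coincide term by term with the moment formulas \eqref{Eq:Moments} and \eqref{Eq:Moments-Psi}, so $\Phi_t = \Phi \circ \pi_{\Phi, t}$ and $\Psi_t = \Psi \circ \pi_{\Psi, t}$ by inspection. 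Part (c) is then immediate from Lemma~\ref{Lemma:vN-Rep}, since each $\pi$ is surjective onto the generating $\ast$-algebra and intertwines the relevant states.

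The heart of the theorem is (d). I plan to show that $\pi_{\Phi, t}(P) \Omega \neq 0$ for every nonzero $P \in \mc{T}(\mc{B}, \phi)$. Fix a linear basis $\set{f_i}_{i \in I}$ of $\mc{B}^\circ$, so that the monomials $X(f_{i_1}) \ldots X(f_{i_n})$ form a basis of $\mc{T}(\mc{B}, \phi)$. Expand $P = \sum_{n=0}^{N} P_n$ into its homogeneous components with $P_N \neq 0$ and $P_N = \sum c_{i_1, \ldots, i_N} X(f_{i_1}) \ldots X(f_{i_N})$. Because $a^+$ raises tensor degree by one while $a^0$ preserves it and $a^-$ lowers it, the component of $\pi_{\Phi, t}(P) \Omega$ of maximal tensor degree $N$ comes only from the all-creation channel inside $\pi_{\Phi, t}(P_N) \Omega$, and equals
\[
\sum c_{i_1, \ldots, i_N} (f_{i_1} \Omega) \otimes \ldots \otimes (f_{i_N} \Omega).
\]
Since $\phi$ is faithful, the GNS map $g \mapsto g \Omega$ is injective on $\mc{B}$, hence $\set{f_i \Omega : i \in I}$ is linearly independent in $\mc{H}^\circ$, and the corresponding elementary tensors are linearly independent in $(\mc{H}^\circ)^{\otimes N}$. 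Thus the displayed sum is nonzero, so $\pi_{\Phi, t}(P) \Omega \neq 0$ and a fortiori $\pi_{\Phi, t}(P) \neq 0$.

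Part (e) then reduces to the same computation: $\Phi_t[P^\ast P] = \norm{\pi_{\Phi, t}(P) \Omega}_t^2$, and for $t > 0$ the inner product on $\mc{F}(\mc{H}^\circ, t)$ is positive definite, by Remark~\ref{Remark:Key-properties}(a) together with the positivity of the weights $(1 + t) t^{n-1}$ and of the GNS inner product on $\mc{H}^\circ$. Hence $\Phi_t[P^\ast P] > 0$ whenever $P \neq 0$. The one delicate step in the whole argument is the degree-tracking in (d): one must verify that inside $\pi_{\Phi, t}(P_N) \Omega$ only the all-creation channel reaches degree $N$, and that no $\pi_{\Phi, t}(P_n) \Omega$ with $n < N$ reaches degree $N$ either. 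Both follow from the degree-shifting behaviour of $a^+, a^0, a^-$ together with the homomorphism property already established in (a).
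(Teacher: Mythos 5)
Your argument is correct, and for parts (a)--(d) it is essentially the paper's proof: (a) via the universal property, (b) by matching \eqref{Eq:Phi_t}, \eqref{Eq:Psi_t} against \eqref{Eq:Moments}, \eqref{Eq:Moments-Psi}, (c) via uniqueness of the GNS representation, and (d) by isolating the top tensor degree of $\pi_{\Phi,t}(P)\Omega$, observing that only the all-creation channel reaches it, and using faithfulness of $\phi$ to make $g \mapsto g\Omega$ injective so that the resulting elementary tensors are linearly independent (the paper extracts a basis of $\Span\set{f_j^{(i)}\Omega}$ after the fact rather than fixing a basis of $\mc{B}^\circ$ upfront, but this is cosmetic). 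The one genuine divergence is in (e): the paper deduces faithfulness of $\Phi_t$ by combining (d) with the fact that $\Omega$ is separating for $\Gamma^\Phi_w(\mc{B},\phi;t)$, which rests on Remark~\ref{Remark:Key-properties}(f) and hence on Theorem~36 of \cite{Ans-Mashburn-Nearest}; you instead note that your proof of (d) actually shows $P \mapsto \pi_{\Phi,t}(P)\Omega$ is injective, so $\Phi_t[P^\ast P] = \norm{\pi_{\Phi,t}(P)\Omega}_t^2 > 0$ follows from positive definiteness of the Fock inner product for $t>0$. Your route for (e) is more elementary and self-contained, at the cost of not exhibiting the (stronger, and separately useful) separating property of $\Omega$ for the whole von Neumann algebra.
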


\begin{proof}
Part (a) follows by the universal property of the tensor algebra. (b) follows by comparing Definition~\ref{Defn:Phi_t} with properties \eqref{Eq:Moments} and \eqref{Eq:Moments-Psi}. Since $X(f,t), Y(f,t)$ are bounded, and $\Omega$ is cyclic in both representations, (c) follows from the uniqueness of the GNS representation.


For (d), suppose $\pi_{\Phi, t}(\vec{\xi}) \Omega = 0$ for some $\vec{\xi} \in \mc{T}(\mc{B}, \phi)$. Denote the component of $\vec{\xi}$ in the highest tensor power by $\sum_{i=1}^{N} f_1^{(i)} \otimes \ldots \otimes f_{n}^{(i)}$. It suffices to show that this sum is zero.
\[
\begin{split}
0 & = \pi_{\Phi, t}[\vec{\xi}] \Omega = \sum_{i=1}^{N} X(f_1^{(i)}, t) \ldots  X(f_{n}^{(i)}, t) \Omega + \text{ terms in lower tensor components} \\
& = \sum_{i=1}^{N} f_1^{(i)} \Omega \otimes \ldots \otimes f_{n}^{(i)} \Omega + \text{ terms in lower tensor components},
 \end{split}
\]
and so
\[
\sum_{i=1}^{N} f_1^{(i)} \Omega \otimes \ldots \otimes f_{n}^{(i)} \Omega = 0
\]
as an element of $(\mc{H}^\circ)^{\otimes n}$. Let $\set{g_k \Omega: 1 \leq k \leq K}$ be a basis for the subspace
\[
\Span{\set{f_j^{(i)} \Omega : 1 \leq i \leq N, 1 \leq j \leq n}} \subseteq \mc{H}^\circ.
\]
In particular, for some coefficients, $f^{(i)}_j \Omega = \sum_{k=1}^K c^{(i)}_{j, k} g_k \Omega$. Since $\phi$ is faithful, $\Omega$ is separating for the representation of $\mc{B}$ on $\mc{H}$. Therefore $f^{(i)}_j = \sum_{k=1}^K c^{(i)}_{j, k} g_k$. So if
\[
0 = \sum_{i=1}^{N} f_1^{(i)} \Omega \otimes \ldots \otimes f_{n}^{(i)} \Omega = \sum_{i=1}^N \sum_{k : [n] \rightarrow [K]} \prod_{j=1}^n c^{(i)}_{j, k(j)} g_{k(1)} \Omega \otimes \ldots \otimes g_{k(n)} \Omega,
\]
then each $\sum_{i=1}^N \prod_{j=1}^n c^{(i)}_{j, k(j)} = 0$, and therefore $\sum_{i=1}^{N} f_1^{(i)} \otimes \ldots \otimes f_{n}^{(i)} = 0$.

Since $\phi$ is faithful, by Remark~\ref{Remark:Key-properties}(f), $\Omega$ is also separating for $\Gamma^\Phi_a(\mc{B}, \phi, t)$, and so $\Phi$ is faithful on it. Therefore (e) follows from (d).
\end{proof}

\begin{Construction}
\label{Constr:Ricard}
For comparison, we include a third Fock space construction considered by Bo\-{\.z}e\-jko, Wysocza{\'n}sky \cite{Boz-Wys} and Ricard \cite{Ricard-t-Gaussian}. It starts with a Hilbert space $\mc{H}$ and the inner product
\[
\ip{\xi_1 \otimes \ldots \otimes \xi_n}{\eta_1 \otimes \ldots \otimes \eta_n}_t = \delta_{n=k} t^{n-1} \prod_{i=1}^n \ip{\xi_i}{\eta_i}
\]
on the corresponding Fock space. For $f \in \mc{H}_{\mf{R}}$, define $a^+(f)$ as usual, and $a^-(f)$ as its adjoint, which comes out to be $t a^-_{free}(f)$ on $\mc{H}^{\otimes n}$, $n \geq 2$, and $a^-_{free}(f)$ on $\mc{H}$. Denote
\[
Z(f,t) = a^+(f) + a^-(f).
\]
Note the absence of the $a^0$ operator. Thus in the framework of \cite{Ans-Mashburn-Nearest}, we would take $\Lambda = 0$, the linear functional $\rho = \phi$, and the map $\gamma = (t-1) \phi$.
\end{Construction}

\subsection{Cumulants, convolutions, and independence}

\begin{Prop}
\label{Prop:Free-power}
Let $\set{f_1, \ldots, f_n} \subset \mc{B}^\circ$. Then
\[
R^{\Phi_t}[X(f_1), \ldots, X(f_n)] = (1 + t) R^\phi[f_1, \ldots, f_n].
\]
Therefore we have the relation between joint distributions
\[
\mu^{\Phi_t}_{X(f_1), \ldots X(f_n)} = \mu_{f_1, \ldots, f_n}^{\boxplus (1 + t)}.
\]
\end{Prop}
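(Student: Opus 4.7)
The plan is to identify the formula for $R^{\Phi_t}[X(f_1), \ldots, X(f_n)]$ given by the Fock space construction, then to recognize the $t$-independent factor as $R^\phi[f_1, \ldots, f_n]$ by specializing to $t = 0$, where Construction~\ref{Constr:Centered} collapses to the original probability space.

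First I would invoke Theorem~\ref{Thm:pi-faithful}(b), which gives the equality of joint distributions $\mu^{\Phi}_{X(f_1, t), \ldots, X(f_n, t)} = \mu^{\Phi_t}_{X(f_1), \ldots, X(f_n)}$ on the tensor algebra. Since free cumulants are determined by the joint distribution, this yields $R^{\Phi_t}[X(f_1), \ldots, X(f_n)] = R^{\Phi}[X(f_1, t), \ldots, X(f_n, t)]$, and the latter is computed by the formula \eqref{Eq:Free-cumulants} in Remark~\ref{Remark:Key-properties}(d):
\[
R^{\Phi_t}[X(f_1), \ldots, X(f_n)] = (1 + t) \sum_{\pi \in \widetilde{\mathrm{NC}}_{ns}(n)} (-1)^{\abs{\pi} - 1} \prod_{V \in \pi} \phi[W(V)].
\]
Crucially, the sum on the right is independent of $t$.

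Next I would evaluate this identity at $t = 0$. By Proposition~\ref{Prop:t=0}, $(\Gamma^\Phi_a(\mc{B}, \phi; 0), \Phi) \simeq (\mc{B}, \phi)$ with $X(f, 0)$ corresponding to $f$. Therefore $R^{\Phi}[X(f_1, 0), \ldots, X(f_n, 0)] = R^{\phi}[f_1, \ldots, f_n]$, and comparing with the formula above at $t = 0$ gives
\[
R^{\phi}[f_1, \ldots, f_n] = \sum_{\pi \in \widetilde{\mathrm{NC}}_{ns}(n)} (-1)^{\abs{\pi} - 1} \prod_{V \in \pi} \phi[W(V)].
\]
Substituting this identification back into the general $t$ formula yields $R^{\Phi_t}[X(f_1), \ldots, X(f_n)] = (1 + t) R^\phi[f_1, \ldots, f_n]$, which is the first assertion.

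Finally, the distributional statement follows from the defining property of the free convolution power: the free cumulants of $\mu^{\boxplus (1+t)}$ are $(1+t)$ times those of $\mu$, and joint distributions are determined by their free cumulants via the moment-cumulant relation over $\mathrm{NC}(n)$. Since the cumulants of $X(f_1), \ldots, X(f_n)$ under $\Phi_t$ match those of the $(1+t)$'th free convolution power of the joint distribution of the $f_i$'s, we obtain $\mu^{\Phi_t}_{X(f_1), \ldots, X(f_n)} = \mu^{\boxplus(1+t)}_{f_1, \ldots, f_n}$. There is no real obstacle here; the substance of the argument is in the Fock space cumulant formula from \cite{Ans-Mashburn-Nearest}, and the $t=0$ specialization is the clean trick that identifies the combinatorial sum with the free cumulant of $\phi$.
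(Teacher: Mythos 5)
Your proposal is correct and follows essentially the same route as the paper: the paper likewise combines the cumulant formula \eqref{Eq:Free-cumulants} (whose sum is $t$-independent, with an overall factor $(1+t)$) with the $t=0$ identification from Proposition~\ref{Prop:t=0} to recognize that sum as $R^\phi[f_1,\ldots,f_n]$, and then reads off the convolution-power statement from the characterization of $\boxplus$-powers via free cumulants. Your write-up is in fact slightly more careful than the paper's in making explicit the role of Theorem~\ref{Thm:pi-faithful}(b) in passing between $R^{\Phi_t}[X(f_i)]$ and $R^{\Phi}[X(f_i,t)]$.
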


\begin{proof}
By Proposition \ref{Prop:t=0}, we have
\[
R^\phi[f_1, \ldots, f_n] = R^\Phi[X(f_1, 0), \ldots, X(f_n, 0)] = R^{\Phi_t}[X(f_1), \ldots, X(f_n)].
\]
From Equation~\eqref{Eq:Free-cumulants}, we see that
\[
R^\Phi[X(f_1, t), \ldots, X(f_n, t)] = (1 + t)R^\Phi[X(f_1, 0), \ldots, X(f_n, 0)].
\]
\end{proof}

\begin{Remark}
Let $(\mc{A}, \mf{E}, \mc{C})$ be a $\mc{C}$-valued probability space. For every c.p. map $\eta$ on $\mc{C}$, one can give a version of Construction~\ref{Constr:Centered}, with the inner product
\begin{multline*}
\ip{f_1 \otimes \ldots \otimes f_n}{g_1 \otimes \ldots \otimes g_n}_\eta \\
= \delta_{n=k} ((1 + \eta) \circ \mf{E}) \left[ g_n^\ast (\eta \circ \mf{E})  \left[ g_{n-1}^\ast (\eta \circ \mf{E}) \left[ \ldots (\eta \circ \mf{E}) [g_1^\ast f_1] \ldots \right] f_{n-1} \right] f_n \right]
\end{multline*}
on $\mc{C} \Omega \oplus \bigoplus_{n=1}^\infty (\mc{A}^\circ)^{\otimes n}$ (but not on $\mc{C} \Omega \oplus \bigoplus_{n=1}^\infty (\mc{A}^\circ)^{\otimes_\mc{C} n}$).
Then the corresponding forms of Proposition~\ref{Prop:t=0} and Corollary~\ref{Cor:Boolean-cumulants} hold with the same proof. In Proposition~\ref{Prop:Free-power}, the formula
\[
R[X(f_1, \eta), \ldots, X(f_n, \eta)] = (1 + \eta) [R[X(f_1, 0), \ldots, X(f_n, 0)]].
\]
holds as well. Note however that, according to the standard definition of $\mc{C}$-valued distribution, this is not sufficient to claim that the joint distribution of the variables on the left-hand side is the $(1 + \eta)$'th free convolution power of the joint distribution of the variables on the right-hand side.
\end{Remark}

\begin{Example}
If $p_1, \ldots, p_n$ are orthogonal projections adding up to the identity, then so are $X(p_1, 0)$, $\ldots$, $X(p_n, 0)$. Therefore the joint distribution of $X(p_1, t), \ldots, X(p_n, t)$ is free multinomial in the sense of Section 4.6 of \cite{AnsFree-Meixner}.
\end{Example}

\begin{Prop}\label{Prop:TwoStateFCs}
Let $\set{f_1, \ldots, f_n} \subset\mc{B}^\circ$.
\begin{enumerate}
\item
The free cumulants with respect to the state ${\Psi_t}$ are
\[
\begin{split}
R^{\Psi_t}[X(f_1), \ldots, X(f_n)]
& = t B^\phi[f_1, \ldots, f_n].
\end{split}
\]
Therefore we have the relation between joint distributions
\[
\mu^{\Psi_t}_{X(f_1), \ldots X(f_n)} = \mf{B}_t(\mu_{f_1, \ldots, f_n})^{\uplus t} = \left( \mu^{\Phi_t}_{X(f_1), \ldots X(f_n)} \right)^{\uplus \frac{t}{1+t}},
\]
where $\mf{B}_t$ is the Belinschi-Nica transformation.
\item
The conditionally free cumulants with respect to the pair $({\Phi_t}, {\Psi_t})$ are
\[
R^{({\Phi_t}, {\Psi_t})}[X(f_1), \ldots, X(f_n)]
= (1 + t) B^\phi[f_1, \ldots, f_n].
\]
\end{enumerate}
\end{Prop}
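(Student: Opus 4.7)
For part (a), the cumulant identity reduces to combining the Fock-space formula \eqref{Eq:Free-cumulants-Psi} from Construction~\ref{Constr:Psi} with Corollary~\ref{Cor:Boolean-cumulants}. The former gives $R^\Psi[Y(f_1,t),\ldots,Y(f_n,t)] = t\,\phi[W(1,\ldots,n)]$, and the latter identifies $\phi[W(1,\ldots,n)]$ with $B^\phi[f_1,\ldots,f_n]$. Since Theorem~\ref{Thm:pi-faithful}(b) gives $\Psi_t = \Psi\circ\pi_{\Psi,t}$ with $\pi_{\Psi,t}(X(f_i)) = Y(f_i,t)$, moments (and hence free cumulants) transfer, yielding $R^{\Psi_t}[X(f_1),\ldots,X(f_n)] = t\,B^\phi[f_1,\ldots,f_n]$.

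For the chain of distribution identities in (a), my plan is first to apply Proposition~\ref{Prop:Free-power} to write $\mu^{\Phi_t}_{X(f_\bullet)} = \mu_{f_\bullet}^{\boxplus(1+t)}$, and then use the definition $\mf{B}_t(\mu) = (\mu^{\boxplus(1+t)})^{\uplus 1/(1+t)}$ together with the Boolean-power semigroup to obtain $\mf{B}_t(\mu_{f_\bullet})^{\uplus t} = (\mu^{\Phi_t}_{X(f_\bullet)})^{\uplus t/(1+t)}$. The remaining equality $\mu^{\Psi_t}_{X(f_\bullet)} = (\mu^{\Phi_t}_{X(f_\bullet)})^{\uplus t/(1+t)}$ is equivalent to $B^{\Psi_t}_n = \tfrac{t}{1+t}\,B^{\Phi_t}_n$. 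The right-hand side is given directly by \eqref{Eq:Boolean-cumulants}, while the left-hand side follows from the universal formula $B_n = \sum_{\pi\in\widetilde{\mathrm{NC}}(n)} R_\pi$ applied to $R^{\Psi_t}_n = tB^\phi_n$, with singletons killed by $B^\phi[f_i] = \phi[f_i] = 0$, collapsing the sum to one over $\widetilde{\mathrm{NC}}_{ns}(n)$ of precisely the required form.

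For part (b), I plan to verify the guess $R^{(\Phi_t,\Psi_t)}_n = (1+t)B^\phi_n$ against the recursive moment-cumulant relation defining the c-free cumulants, and conclude by uniqueness of that recursion. Substituting this value together with $R^{\Psi_t}_n = tB^\phi_n$ from (a) into the defining formula gives
\[
\sum_{\pi\in\mathrm{NC}(n)} (1+t)^{|\Outer(\pi)|} t^{|\Inner(\pi)|} \prod_{V\in\pi} B^\phi_{|V|}[f_i : i\in V].
\]
By Corollary~\ref{Cor:Boolean-cumulants} each $B^\phi_{|V|}$ rewrites as $\phi[W(V)]$; any singleton $\{i\}$ contributes $\phi[f_i]=0$, so the sum collapses to one over $\mathrm{NC}_{ns}(n)$, matching the moment formula \eqref{Eq:Moments} for $\Phi_t$ exactly.

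The only technical care needed throughout is in tracking how the centering hypothesis $f_i\in\mc{B}^\circ$ collapses sums over $\mathrm{NC}(n)$ or $\widetilde{\mathrm{NC}}(n)$ to their no-singleton analogues $\mathrm{NC}_{ns}(n)$ and $\widetilde{\mathrm{NC}}_{ns}(n)$; beyond this bookkeeping, both claims amount to substituting identities already established in Remark~\ref{Remark:Key-properties}, Construction~\ref{Constr:Psi}, and Corollary~\ref{Cor:Boolean-cumulants} into the standard moment-cumulant relations, so I do not anticipate any serious obstacle.
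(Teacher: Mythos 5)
Your proof is correct, and while part (a)'s cumulant identity is obtained exactly as in the paper (combining \eqref{Eq:Free-cumulants-Psi} with Corollary~\ref{Cor:Boolean-cumulants}), your handling of the remaining claims is organized differently. For the chain of distribution identities, the paper computes $B^{\Psi_t}_n = \sum_{\pi\in\widetilde{\mathrm{NC}}(n)} t^{\abs{\pi}} B^\phi_\pi$ and then identifies this expression step by step with $t B_{\mf{B}_t(\mu)}$, $B_{\mf{B}_t(\mu)^{\uplus t}}$, and $B_{(\mu^{\boxplus(1+t)})^{\uplus t/(1+t)}}$ by citing specific results of \cite{Belinschi-Nica-Free-BM}; you instead verify $B^{\Psi_t}_n = \tfrac{t}{1+t} B^{\Phi_t}_n$ directly against the Fock-space formula \eqref{Eq:Boolean-cumulants} and reserve the Belinschi--Nica input for the purely definitional equality $\mf{B}_t(\mu)^{\uplus t} = (\mu^{\boxplus(1+t)})^{\uplus t/(1+t)}$ --- an equivalent but slightly more self-contained route. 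For part (b) the difference is more substantive: the paper expands $\Phi_t$-moments over interval partitions of $\Phi_t$-Boolean cumulants, matches this against the conditionally free moment-cumulant relation, and concludes by induction using the decomposition of a noncrossing partition into an interval partition with irreducible partitions nested in each block; you instead substitute the candidate values $R^{(\Phi_t,\Psi_t)}_n = (1+t)B^\phi_n$ and $R^{\Psi_t}_n = tB^\phi_n$ into the defining relation, observe via Corollary~\ref{Cor:Boolean-cumulants} that the resulting sum collapses (singletons vanishing since $\phi[f_i]=0$) to exactly \eqref{Eq:Phi_t}, and invoke uniqueness of the c-free cumulant recursion. Your version avoids the combinatorial induction entirely and is arguably cleaner; the paper's version makes the interval-partition/irreducible-partition bijection explicit, which is the same structural fact your uniqueness appeal implicitly relies on. Both arguments are sound.
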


\begin{proof}
For (a), combine equation~\eqref{Eq:Free-cumulants-Psi} with Corollary~\ref{Cor:Boolean-cumulants}. To obtain the second identity, using  Proposition~3.5, equation (6.11), and Definition~4.1 from \cite{Belinschi-Nica-Free-BM},
\[
\begin{split}
B^{\Psi_t}[X(f_1), \ldots, X(f_n)]
& = \sum_{\pi \in \widetilde{NC}(n)} R^{\Psi_t}_\pi[X(f_1), \ldots, X(f_n)] \\
& = \sum_{\pi \in \widetilde{NC}(n)} t^{\abs{\pi}} B^\phi_\pi[f_1, \ldots, f_n] \\
& = t B_{\mf{B}_t(\mu_{f_1, \ldots, f_n})} \\
& = B_{\mf{B}_t(\mu_{f_1, \ldots, f_n})^{\uplus t}} \\
& = B_{\left( \mu_{f_1, \ldots, f_n}^{\boxplus (1 + t)} \right)^{\uplus \frac{t}{1+t}}}.
\end{split}
\]
Now combine with Proposition~\ref{Prop:Free-power}.

(b) First, $R^{({\Phi_t},{\Psi_t})}[X(f_1)] = 0$. Next,
\begin{align*}
\sum_{\pi \in \mathrm{Int}(n)} B^{\Phi_t} [X(f_1), \ldots , X(f_n)] &= {\Phi_t} [X(f_1) \ldots X(f_n)] \\
&= \sum_{\pi \in \mathrm{NC}(n)} \left( \prod_{V \in \Inner(\pi)} R^{\Psi_t} [f_i : i \in V] \right) \left( \prod_{U \in \Outer(\pi)} R^{{\Phi_t},{\Psi_t}} [f_i : i \in U] \right).
\end{align*}

The left-hand side is equal to
\begin{equation*}
\sum_{\pi \in \mathrm{Int}(n)} \prod_{V \in \pi} \left( (1+t) \sum_{\sigma\in\widetilde{\mathrm{NC}}_{ns}(V)} t^{|\sigma| - 1} \prod_{W\in\sigma} B^\phi [f_i : i \in W] \right),
\end{equation*}
while the right-hand side, by part (a), is equal to
\begin{equation*}
\sum_{\pi \in \mathrm{NC}(n)} \left( \prod_{V \in \Inner(\pi)} t B^\phi [f_i : i \in V] \right) \left( \prod_{U \in \Outer(\pi)} R^{{\Phi_t},{\Psi_t}} [f_i : i \in U] \right).
\end{equation*}
From this, the claim inductively follows, since each $\pi\in\mathrm{NC}(n)$ can be uniquely constructed from an interval partition by replacing each block $V$ with some $\sigma\in\widetilde{\mathrm{NC}}(V)$, a partition of the elements of the block.
\end{proof}

The following corollary follows directly from Propositions \ref{Prop:Free-power} and \ref{Prop:TwoStateFCs}.

\begin{Cor}
\label{Cor:Independence-functor}
\

\begin{enumerate}
\item
Suppose $\set{f_1, \ldots, f_n}$ are $\ast$-free in $(\mc{B}, \phi)$. Then $\set{X(f_1), \ldots, X(f_n)}$ are $\ast$-free in $(\mc{T}(\mc{B}, \phi), \Phi_t)$.
\item
Suppose $\set{f_1, \ldots, f_n} \subset \mc{B}^\circ$ are $\ast$-Boolean independent in $(\mc{B}, \phi)$. Then $\set{X(f_1), \ldots, X(f_n)}$ are $\ast$-free in $(\mc{T}(\mc{B}, \phi), \Psi_t)$ and $\ast$-conditionally free in $(\mc{T}(\mc{B}, \phi), \Phi_t, \Psi_t)$.
\end{enumerate}
\end{Cor}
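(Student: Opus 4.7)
The plan is to derive each assertion from the mixed-cumulant criteria recalled in Section 2.4, combining the cumulant identities in Propositions \ref{Prop:Free-power} and \ref{Prop:TwoStateFCs} with the Krawczyk--Speicher formula for cumulants with products as entries.

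For part (a), let $\mc{B}_i$ denote the unital $\ast$-subalgebra of $\mc{B}$ generated by $f_i$, and $\mc{A}_i$ the unital $\ast$-subalgebra of $\mc{T}(\mc{B}, \phi)$ generated by $X(f_i)$. Each element of $\mc{A}_i$ is a linear combination of $1$ and tensor products $X(g_1) \cdots X(g_l)$ with $g_j \in \{f_i^\circ, (f_i^\ast)^\circ\} \subseteq \mc{B}_i \cap \mc{B}^\circ$, so by multilinearity of cumulants it suffices to check that $R^{\Phi_t}_m[a_1, \ldots, a_m] = 0$ whenever $a_k = X(g^{(k)}_1) \cdots X(g^{(k)}_{l_k})$ with $g^{(k)}_j \in \mc{B}_{u(k)} \cap \mc{B}^\circ$ and the $u(k)$'s are not all equal. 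The Krawczyk--Speicher formula expands this as
\[
\sum_{\pi \in \mathrm{NC}(L),\, \pi \vee \hat{0} = \hat{1}} R^{\Phi_t}_\pi\bigl[X(g^{(1)}_1), \ldots, X(g^{(m)}_{l_m})\bigr],
\]
where $L = \sum_k l_k$ and $\hat{0}$ is the partition grouping the factors of each $a_k$. Proposition \ref{Prop:Free-power} turns each block contribution into $(1+t) R^\phi$ of the corresponding $g$'s, and freeness of the $\mc{B}_i$'s in $(\mc{B}, \phi)$ kills every block that mixes different $\mc{B}_i$'s. Every surviving $\pi$ therefore refines the partition $\sigma$ of factor-indices by $u$-label; since $\hat{0}$ refines $\sigma$ as well, $\pi \vee \hat{0}$ refines $\sigma \neq \hat{1}$, contradicting $\pi \vee \hat{0} = \hat{1}$. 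The sum is empty, so the cumulant vanishes.

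Part (b) runs along the same lines. For $\ast$-freeness with respect to $\Psi_t$, replace $\mc{B}_i$ by the non-unital $\ast$-subalgebra $\mc{N}_i$ of $\mc{B}$ generated by $f_i$ and invoke Proposition \ref{Prop:TwoStateFCs}(a) in place of Proposition \ref{Prop:Free-power}: each block contribution becomes $t B^\phi$, which vanishes on blocks spanning multiple $\mc{N}_i$'s by Boolean independence of the $f_i$'s. The identical lattice argument finishes this case. With $\ast$-freeness in $\Psi_t$ established, the Section 2.4 criterion reduces conditional freeness to vanishing of mixed $R^{(\Phi_t, \Psi_t)}$-cumulants. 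I would invoke the analogous cumulants-with-products formula for conditionally free cumulants, in which outer blocks carry $R^{(\phi, \psi)}$ and inner blocks carry $R^\psi$. By Proposition \ref{Prop:TwoStateFCs}, each factor reduces to a multiple of $B^\phi$ on a subset of $g$'s from within a single $\mc{N}_i$, and the same $\pi \vee \hat{0}$ argument eliminates every surviving term.

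The main technical point is the conditionally free version of the Krawczyk--Speicher product formula, which is less widely cited than its free counterpart. If avoiding it is preferable, one can verify conditional freeness directly at the level of moments: expanding $a_1 \cdots a_m$ as a single tensor in $\mc{T}(\mc{B}, \phi)$, applying \eqref{Eq:Phi_t}, and using the centering conditions $\Psi_t[a_k] = 0$ together with the already-established $\ast$-freeness in $\Psi_t$, one should recover the factorization $\Phi_t[a_1 \cdots a_m] = \Phi_t[a_1] \cdots \Phi_t[a_m]$ that defines conditional freeness.
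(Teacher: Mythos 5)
Your argument is correct and takes essentially the same route as the paper, whose entire proof is the remark that the corollary follows directly from Propositions~\ref{Prop:Free-power} and \ref{Prop:TwoStateFCs} via the mixed-cumulant characterizations of the three independences recalled in Section~2. The only thing you add is the explicit products-as-entries bookkeeping (Krawczyk--Speicher and its conditionally free analogue) needed to pass from cumulants of the generators $X(f_i)$ to cumulants of general elements of the generated $\ast$-algebras, a standard step the paper leaves implicit.
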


\begin{Prop}
\label{Prop:Compression}
Let $q$ be a projection of trace $\frac{1}{1+t}$, so that we may identify $W^\ast(q)$ with $\overset{q}{\underset{\frac{1}{1+t}}{\mf{C}}} \oplus \overset{1-q}{\underset{\frac{t}{1+t}}{\mf{C}}}$ with the state $\tau$. Denote by $\mc{B} \ast W^\ast(q)$ the algebraic reduced free product, and consider the noncommutative probability space $\mc{A} = q (\mc{B} \ast W^\ast(q)) q$ with the state $\tilde{\phi} = (1 + t) (\phi \ast \tau)|_{\mc{A}}$. Then
\[
W^\ast (\mc{T}(\mc{B}, \phi), \Phi_t) \simeq W^\ast(\mc{A}, \tilde{\phi}).
\]
In particular, when $\mc{B} \ast \left( \overset{q}{\underset{\frac{1}{1+t}}{\mf{C}}} \oplus \overset{1-q}{\underset{\frac{t}{1+t}}{\mf{C}}} \right)$ is a II$_1$ factor,
\[
W^\ast (\mc{T}(\mc{B}, \phi), \Phi_t) \simeq \left(\mc{B} \ast \left( \overset{q}{\underset{\frac{1}{1+t}}{\mf{C}}} \oplus \overset{1-q}{\underset{\frac{t}{1+t}}{\mf{C}}} \right) \right)_{\frac{1}{1+t}}.
\]
\end{Prop}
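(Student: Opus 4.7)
The plan is to build an explicit surjective $\ast$-homomorphism $F : \mc{T}(\mc{B}, \phi) \to \mc{A}$ satisfying $\Phi_t = \tilde\phi \circ F$, and then invoke Lemma~\ref{Lemma:vN-Rep}. Using the universal property of the tensor algebra, the $\ast$-linear map $\mc{B}^\circ \to \mc{A}$ defined by $f \mapsto (1+t)\, q f q$ extends uniquely to a $\ast$-homomorphism $F$ with $F(X(f_1)\cdots X(f_n)) = (1+t)^n\, q f_1 q f_2 q \cdots q f_n q$.

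Surjectivity of $F$ follows from the combinatorial structure of the reduced free product: an arbitrary element of $\mc{A} = q(\mc{B} \ast W^\ast(q))q$ is a $q$-sandwich of an alternating word in $\mc{B}$ and $\mf{C}[q]$. Substituting $a = \phi[a] \cdot 1 + a^\circ$ for each $\mc{B}$-factor and absorbing the resulting $q$'s reduces every such element to a linear combination of $q$ itself and expressions of the form $q f_1 q \cdots q f_n q$ with $f_i \in \mc{B}^\circ$, all of which lie in the image of $F$.

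To verify $\Phi_t = \tilde\phi \circ F$, compare joint free cumulants. By the Nica--Speicher compression formula applied to the projection $q$ of trace $\lambda = 1/(1+t)$, which is free from $\mc{B}$ inside $(\mc{B} \ast W^\ast(q), \phi \ast \tau)$, and using that $\tilde\phi$ is exactly the compressed (renormalized) state $\lambda^{-1}(\phi \ast \tau)|_\mc{A}$, one gets
\[
R^{\tilde\phi}[q f_1 q, \ldots, q f_n q] = \lambda^{n-1} R^{\phi \ast \tau}[f_1, \ldots, f_n] = (1+t)^{-(n-1)}\, R^\phi[f_1, \ldots, f_n],
\]
where the last equality uses that free cumulants of elements of $\mc{B}$ agree in $(\mc{B}, \phi)$ and in the free product. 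Multiplying each argument by $(1+t)$ picks up a total factor of $(1+t)^n$, so
\[
R^{\tilde\phi}[(1+t)q f_1 q, \ldots, (1+t)q f_n q] = (1+t)\, R^\phi[f_1, \ldots, f_n],
\]
which matches $R^{\Phi_t}[X(f_1), \ldots, X(f_n)] = (1+t) R^\phi[f_1, \ldots, f_n]$ from Proposition~\ref{Prop:Free-power}. Equal joint free cumulants force equal joint moments, so $\Phi_t = \tilde\phi \circ F$. Lemma~\ref{Lemma:vN-Rep} then yields $W^\ast(\mc{T}(\mc{B}, \phi), \Phi_t) \simeq W^\ast(\mc{A}, \tilde\phi)$. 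The concluding statement is immediate: when $\mc{B} \ast W^\ast(q)$ is a II$_1$ factor, the corner $q(\mc{B} \ast W^\ast(q))q$ with the normalized state is by definition the compression $(\mc{B} \ast W^\ast(q))_{1/(1+t)}$.

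The main obstacle is bookkeeping in the cumulant computation: two opposing normalizations (the scaling of each compressed variable by $(1+t)$ and the renormalization of the state by $(1+t)$) must conspire to leave precisely one factor of $(1+t)$ in front of $R^\phi$, matching Proposition~\ref{Prop:Free-power} exactly. This is what forces the definitions $F(X(f)) = (1+t) q f q$ and $\tilde\phi = (1+t)(\phi \ast \tau)|_\mc{A}$ in the statement; any other scaling would produce a mismatch.
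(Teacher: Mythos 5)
Your proposal is correct and follows essentially the same route as the paper: the same map $X(f) \mapsto (1+t)\,q f q$ extended by the universal property, the same surjectivity observation, the same comparison of free cumulants via the Nica--Speicher compression formula combined with Proposition~\ref{Prop:Free-power}, and the same appeal to Lemma~\ref{Lemma:vN-Rep}. The only (trivial) detail worth adding is the explicit check that the units match, $R^{\Phi_t}[1] = 1 = R^{\tilde\phi}[q]$, which the paper records.
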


\begin{proof}
Define the map $(\mc{T}(\mc{B}, \phi), \Phi_t) \rightarrow (\mc{A}, \tilde{\phi})$ by $1 \mapsto q$,
\begin{equation}
\label{Eq:map}
X(f) \mapsto (1+t) q f q,
\end{equation}
and extend it as a homomorphism by the universal property of the tensor algebra. Since $\mc{A}$ is a unital algebra with unit $q$, and its elements are linear combinations of $q$ and $q f_1 q \ldots q f_n q$ for $\set{f_i : 1 \leq i \leq n} \subset \mc{B}^\circ$, the map is onto.

By definition, $R^{\Phi_t}[1] = 1 = R^{\tilde{\phi}}[q]$. Using Proposition~\ref{Prop:Free-power} and \cite{Nica-Speicher-Multiplication}, for $f_i \in \mc{B}^\circ$,
\[
R^{\Phi_t}[X(f_1), \ldots, X(f_n)] = (1+t) R^\phi[f_1, \ldots, f_n] = (1+t)^n R^{\tilde{\phi}}[q f_1 q, \ldots, q f_n q].
\]
It follows that the map \eqref{Eq:map} preserves joint distributions. The result now follows from Lemma~\ref{Lemma:vN-Rep}.
\end{proof}

\section{von Neumann algebras}
\label{Sec:vN}

In this section, we investigate the von Neumann algebras generated by $\set{X(f, t) : f \in (\mc{B}^\circ)^{sa}}$ for a fixed $t > 0$ for several examples of *-algebras $\mc{B}$, as well as their subalgebras generated by $\set{X(p_i^\circ, t) : 1 \leq i \leq d}$ for collections of projections $\set{p_i}$ generating $\mc{B}$. Note that in the Gaussian/semiciruclar case, the operators $X(\xi_i)$ corresponding to a basis of the Hilbert space $\mc{H}$ generate the von Neumann algebra of operators $\Gamma(\mc{H})$; but in the case of algebras, operators $X(p_i^\circ)$ corresponding to the generators of the algebra $\mc{B}$ typically generate only a subalgebra of $\Gamma(\mc{B}, \phi)$.


First we consider the case $\mc{B} = \mf{C}^2$. The following result implies Theorem~\ref{Thm:C2-Intro}.

\begin{Thm}
\label{Thm:C2}
Fix $\alpha \in (0,1)$. Let $\mc{B} = \underset{\alpha}{\overset{p}{\mf{C}}} \oplus \overset{1 - p}{\underset{1 - \alpha}{\mf{C}}}$, that is, $\mc{B} \simeq \mf{C}^2$ is generated by $1$ and a projection $p$ with $\phi(p) = \alpha$. Without loss of generality, we may assume that $\alpha \leq \frac{1}{2}$. $\mc{B}^\circ$ is spanned by $p^\circ = p - \alpha$, and $\mc{B} \simeq \mf{C}^2$ is also generated by $p^\circ$ and $1$. So $\Gamma(\mc{B}, \phi; t)$ is generated by a single element $X(p^\circ)$ and $1$.
\begin{enumerate}
\item
The distribution of $p$ is Bernoulli with parameter $\alpha$.
\item
The distribution of $X(p^\circ, t)$ has absolutely continuous part
\begin{equation}
d\mu_{X(p^\circ, t)}^\Phi = \frac{(t+1)\sqrt{4t\alpha (1-\alpha)  -\big( (1-2\alpha) - x \big)^2}}{2\pi(\alpha(1+t) + x)((1-\alpha)(1+t)-x)}dx,
\end{equation}
with support $[ 1 - 2\alpha - 2\sqrt{t \alpha (1-\alpha)} , 1 - 2\alpha + 2\sqrt{t \alpha (1-\alpha)} ]$ and atoms
\begin{align*}
\mu_{X(p^\circ, t)}^\Phi(\{-\alpha(1+t)\}) &= \max\{ 1-\alpha(1+t) ,0\} \\
\mu_{X(p^\circ, t)}^\Phi(\{(1-\alpha)(1+t)\}) &= \max\{ \alpha(1+t) - t ,0\}
\end{align*}
In particular, for $\alpha = \frac{1}{2}$,
\begin{equation}
d\mu_{X(p^\circ, t)}^\Phi = \frac{(t+1)\sqrt{t - x^2}}{2\pi \left(\left(\frac{1+t}{2} \right)^2 - x^2 \right)} \,dx,
\end{equation}
and
\[
\mu_{X(p^\circ, t)}^\Phi\left(\set{\pm \frac{1+t}{2}}\right) = \max\set{ \frac{1-t}{2}, 0 }.
\]
\item
The $W^\ast$-probability space $W^\ast(\mc{T}(\mc{B}, \phi), \Phi_t)$ is isomorphic to $L^\infty (\mu_{X(p^\circ, t)}^\Phi)$ direct sum one or two copies of $\mf{C}$.
\item
The distribution of $X(p^\circ, t)$ with respect to $\Psi$ has absolutely continuous part
\begin{equation}
d\mu_{X(p^\circ, t)}^\Psi = \frac{\sqrt{4t\alpha (1-\alpha)  -\big( (1-2\alpha) - x \big)^2} }{2 \pi (t\alpha(1-\alpha) + (1-2\alpha)x) }dx,
\end{equation}
with support $[1 - 2\alpha - 2\sqrt{t\alpha(1 - \alpha)}, 1 - 2\alpha + 2\sqrt{t\alpha(1 - \alpha)}]$ and, for $\alpha \neq \frac{1}{2}$, an atom
\begin{equation*}
d\mu_{X(p^\circ, t)}^\Psi\left(\left\{ \frac{-t\alpha(1-\alpha)}{1-2\alpha} \right\}\right) = \max\left\{ 1 -t\frac{\alpha(1-\alpha)}{(1-2\alpha)^2},0 \right\}
\end{equation*}
For $\alpha = \frac{1}{2}$,
\begin{equation}
d\mu_{X(p^\circ, t)}^\Psi = \frac{2}{\pi t} \sqrt{t - x^2} dx.
\end{equation}
\end{enumerate}
\end{Thm}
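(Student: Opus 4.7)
Part (a) is immediate from the definition of $\phi$. The plan for the rest is to use Proposition~\ref{Prop:Free-power} to identify $\mu^\Phi_{X(p^\circ, t)}$ as the $(1+t)$-fold free convolution power of a Bernoulli law, compute its Cauchy transform explicitly, and read off the density and atoms in (b); deduce (c) from the fact that $\mc{T}(\mc{B}, \phi)$ is singly-generated; and obtain (d) from (b) via the Boolean energy function applied to Proposition~\ref{Prop:TwoStateFCs}(a).

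For part (b), the law $\nu$ of $p^\circ = p - \alpha$ is $\alpha\, \delta_{1-\alpha} + (1-\alpha)\, \delta_{-\alpha}$, with rational Cauchy transform
\[
G_\nu(z) = \frac{z - (1-2\alpha)}{z^2 - (1-2\alpha) z - \alpha(1-\alpha)}.
\]
Inverting this quadratic yields $K_\nu = G_\nu^{-1}$; the scaling $R_{\nu^{\boxplus(1+t)}} = (1+t) R_\nu$ gives $K^\Phi$, and solving $K^\Phi(w) = z$ (again a quadratic in $w$, whose discriminant simplifies to $4(1+t)^2[(z-(1-2\alpha))^2 - 4t \alpha (1-\alpha)]$) produces, after choosing the branch with $G^\Phi(z) \sim 1/z$ at infinity,
\[
G^\Phi(z) = \frac{(1-t) z - (1+t)(1-2\alpha) + (1+t)\sqrt{(z-(1-2\alpha))^2 - 4t\alpha(1-\alpha)}}{2(z + (1+t)\alpha)(z - (1+t)(1-\alpha))}.
\]
Stieltjes inversion on the interval where the radicand is negative produces the claimed absolutely continuous density. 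At each candidate pole $z = -(1+t)\alpha$ and $z = (1+t)(1-\alpha)$, the square root evaluates to $|\alpha - t(1-\alpha)|$, and a direct residue computation shows that the apparent pole is cancelled by a vanishing numerator exactly when the candidate mass ($1 - \alpha(1+t)$ or $\alpha(1+t) - t$, respectively) would be negative, so both residues reduce to the stated $\max\{\cdot, 0\}$ expressions.

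Part (c) is then immediate: since $\dim \mc{B}^\circ = 1$, the algebra $\mc{T}(\mc{B}, \phi)$ is singly generated, and $W^\ast(\mc{T}(\mc{B}, \phi), \Phi_t)$ coincides with the abelian von Neumann algebra generated by the self-adjoint element $X(p^\circ, t)$, isomorphic to $L^\infty$ of its spectral distribution. The three cases of Theorem~\ref{Thm:C2-Intro} correspond to whether both, one, or neither of the atoms is present, with thresholds $t = \alpha/(1-\alpha)$ (the atom at $(1-\alpha)(1+t)$ disappears) and $t = (1-\alpha)/\alpha$ (the atom at $-\alpha(1+t)$ disappears).

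For part (d), Proposition~\ref{Prop:TwoStateFCs}(a) gives $\mu^\Psi = (\mu^\Phi)^{\uplus t/(1+t)}$, and the Boolean energy function $E_\mu(z) = z - 1/G_\mu(z)$ linearizes Boolean convolution, so $E^\Psi = \frac{t}{1+t} E^\Phi$ and hence $G^\Psi(z) = (1+t) G^\Phi(z)/(z G^\Phi(z) + t)$. Substituting the expression from (b) and rationalizing transforms the denominator into the linear factor $t\alpha(1-\alpha) + (1-2\alpha) z$ (when $\alpha \neq 1/2$); Stieltjes inversion then gives the claimed density on the same interval, and the residue at the single real pole $x = -t\alpha(1-\alpha)/(1-2\alpha)$ yields the atomic mass, truncated to be nonnegative. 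The $\alpha = 1/2$ case is recovered as a limit in which the pole migrates to infinity and leaves no atom. The main obstacle throughout is this atom bookkeeping: the critical parameter values are precisely those at which an algebraic pole of the Cauchy transform is cancelled by a vanishing numerator, so the residue computations require careful sign tracking of $\alpha - t(1-\alpha)$ and $1 - \alpha(1+t)$.
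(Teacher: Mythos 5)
Your proposal is correct and, for parts (a)--(c), follows essentially the paper's route: compute the Cauchy transform of the centered Bernoulli law, pass to the $R$-transform, scale by $(1+t)$ via Proposition~\ref{Prop:Free-power}, invert, and apply Stieltjes inversion; your $G^\Phi$ agrees with the paper's $G_{1+t}$ (both numerator and denominator differ by an overall sign). The one substantive divergence is in the atoms of (b) and in all of (d). For the atoms, the paper simply invokes the known formula for atoms of free convolution powers (Proposition 8, Chapter 3 of \cite{Mingo-Speicher-book}), i.e.\ $\nu^{\boxplus(1+t)}(\{(1+t)a\}) = \max\{(1+t)\nu(\{a\}) - t, 0\}$ applied to $\nu(\{-\alpha\})=1-\alpha$ and $\nu(\{1-\alpha\})=\alpha$; your residue computation reaches the same conclusion but does the bookkeeping by hand, and you should fix one slip: at $z=-(1+t)\alpha$ the radicand equals $\bigl((1-\alpha)-t\alpha\bigr)^2$, so the square root there is $\abs{1-\alpha(1+t)}$, not $\abs{\alpha - t(1-\alpha)}$ (the latter is the value at the other pole $z=(1+t)(1-\alpha)$); the cancellation mechanism you describe is otherwise right. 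For (d), the paper represents $\mc{B}\subset M_2(\mf{C})$, computes the Boolean cumulants $B_n^\phi[p^\circ]=\alpha(1-\alpha)(1-2\alpha)^{n-2}$ explicitly, so that by Proposition~\ref{Prop:TwoStateFCs}(a) the $R$-transform with respect to $\Psi$ is the rational geometric series $t\alpha(1-\alpha)z/(1-(1-2\alpha)z)$, which is then inverted directly; you instead derive $G^\Psi$ from $G^\Phi$ via the Boolean power relation $\mu^\Psi=(\mu^\Phi)^{\uplus t/(1+t)}$ and the self-energy $E_\mu(z)=z-1/G_\mu(z)$. Your formula $G^\Psi = (1+t)G^\Phi/(zG^\Phi+t)$ is correct and the approach is valid, but it requires rationalizing a square-root expression and verifying that the denominator collapses to the linear factor $t\alpha(1-\alpha)+(1-2\alpha)z$, whereas the paper's route through the explicit Boolean cumulants produces a rational $R$-transform immediately and makes the single atom and the degenerate $\alpha=\tfrac12$ (semicircular) case transparent.
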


\begin{proof}
(a) follows from direct computation and observing the moments are $1, \alpha, \alpha, \alpha, \ldots$.

For (b), we will use Proposition~\ref{Prop:Free-power}. The Cauchy transform of the distribution of $p$ is
	\begin{equation*}
G(z) = \frac{1-\alpha}{z} + \frac{\alpha}{z-1},
	\end{equation*}
	whose inverse is
	\begin{equation*}
z = \frac{\eta+1 \pm\sqrt{(\eta+1)^2 - 4\eta(1-\alpha)}}{2\eta}.
	\end{equation*}
Subtracting $\frac{2}{2\eta}$ from this gives the R-transform. To apply Proposition \ref{Prop:Free-power} requires the distribution of $p-\alpha$, whose R-transform is
\begin{equation*}
z = \frac{(1-2\alpha)\eta-1 \pm\sqrt{(\eta+1)^2 - 4\eta(1-\alpha)}}{2\eta}.
\end{equation*}
The R-transform of its $(1+t)$th convolution power is
\begin{equation*}
z = (1+t)\frac{(1-2\alpha)\eta-1 \pm\sqrt{(\eta+1)^2 - 4\eta(1-\alpha)}}{2\eta}.
\end{equation*}
Adding $\frac{2}{2\eta}$ to this and inverting gives the corresponding Cauchy transform
\begin{equation*}
G_{1+t}(z) = \frac{(1-2\alpha)(1+t) - (1-t)z - (t+1)\sqrt{ 4\alpha^2 (1+t) - 4\alpha(1+t-z) + (z-1)^2 }}{2(\alpha(1+t) + z)((1-\alpha)(1+t)-z)}.
\end{equation*}
Applying Stieltjes inversion gives the result. Applying Proposition 8 from Chapter 3 of \cite{Mingo-Speicher-book} gives the atoms. (c) follows from (b).

(d) Represent $\mc{B}$ as a subalgebra of $M_2(\mf{C})$, with $\phi$ the vector state of $(1,0)$, and $p \in M_2(\mf{C})$ a projection such that $p_{11} = \alpha$. Then $p$ must have the form $\begin{pmatrix}
\alpha & \beta \\
\beta & 1- \alpha
	\end{pmatrix}$ where $\beta^2 = \alpha(1-\alpha)$, so $p^\circ = \begin{pmatrix}
	0 & \beta \\
	\beta & 1- 2\alpha
	\end{pmatrix}$
Therefore by Proposition~\ref{Prop:TwoStateFCs},
	\begin{align*}
R_n^\Psi [X(p^\circ, t)] &= tB_n^\phi[p^\circ] \\
&= t\langle p^\circ\Lambda(p^\circ, \Lambda(p^\circ, \ldots, \Lambda(p^\circ,p^\circ)\ldots))\Omega, \Omega\rangle \\
&= t\alpha(1-\alpha)(1-2\alpha)^{n-2},
	\end{align*}
	and so the R-transform with respect to $\Psi$ is the geometric series
	\begin{equation*}
t\alpha(1-\alpha)\sum_{n=1}^\infty (1-2\alpha)^{n-1}z^n = \frac{t\alpha(1-\alpha)z}{1-(1-2\alpha)z}.
	\end{equation*}
Add $\frac{1}{z}$ to this and invert to get the Cauchy transform
\begin{equation*}
z = \frac{1 - 2\alpha + \eta \pm \sqrt{(1-2\alpha + \eta)^2 - 4(t\alpha(1-\alpha) + (1-2\alpha)\eta)}}{2(t\alpha(1-\alpha)+(1-2\alpha)\eta)}.
\end{equation*}
Apply Stieltjes inversion to get the absolutely continuous part. We have an atom at $x = \frac{-t\alpha(1-\alpha)}{1-2\alpha}$, with measure $\max\left\{ 1 -t\frac{\alpha(1-\alpha)}{(1-2\alpha)^2},0 \right\}$.
\end{proof}

\begin{Thm}
\label{Thm:Free-product}
Let $\mc{B}$ be the free product $\mc{B} = \ast_{i=1}^d \left( \underset{\alpha_i}{\overset{p_i}{\mf{C}}} \oplus \overset{1 - p_i}{\underset{1 - \alpha_i}{\mf{C}}} \right)$, the $i$'th copy generated by the identity and the projection $p_i$ of state $\alpha_i \leq \frac{1}{2}$. In $W^\ast(\mc{T}(\mc{B}, \phi), \Phi_t)$, the von Neumann subalgebra generated by $X(p_i^\circ)$ is
\[
W^\ast(X(p_i^\circ) : 1 \leq i \leq d)  \simeq
\begin{cases}
\underset{1 - \gamma_1 - \gamma_2}{L(\mf{F}_x)} \oplus \underset{\gamma_1}{\mf{C}} \oplus \underset{\gamma_2}{\mf{C}}, & \quad t < \frac{\alpha_d - \sum_{i=1}^{d-1}\alpha_i}{1 - \left( \alpha_d - \sum_{i=1}^{d-1}\alpha_i \right)}, \\
\underset{1 - \gamma_1}{L(\mf{F}_x)} \oplus \underset{\gamma_1}{\mf{C}}, & \quad \frac{\alpha_d - \sum_{i=1}^{d-1}\alpha_i}{1 - \left( \alpha_d - \sum_{i=1}^{d-1}\alpha_i \right)} \leq t < \frac{1 - \left( \left( \sum_{i=1}^d\alpha_i \right) \right)}{ \left( \sum_{i=1}^d\alpha_i \right) }, \\
\mf{F}_d, & \quad \frac{1 - \left( \left( \sum_{i=1}^d\alpha_i \right) \right)}{ \left( \sum_{i=1}^d\alpha_i \right)} \leq t,
\end{cases}
\]
where
	\begin{align*}
		\gamma_1 &= \max\left\{ 1 - \left( \sum_{i=1}^{d}\alpha_1 \right)(1+t), 0\right\} \\
		\gamma_2 &= \max\left\{ \left( \alpha_d - \sum_{i=1}^{d-1}\alpha_i \right)(1+t) -t, 0 \right\}
	\end{align*}
	and $x$ is chosen so that the free dimension is the sum of the free dimensions of $W^\ast(X(p_i^\circ))$.
\end{Thm}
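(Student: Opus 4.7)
The plan is to combine three ingredients: the freeness functoriality of the Fock construction (Corollary~\ref{Cor:Independence-functor}(a)), the single-projection computation of Theorem~\ref{Thm:C2}, and Dykema's calculus of reduced free products of direct sums of atomic and diffuse algebras from \cite{Dykema-Free-products-hyperfinite}.

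First, since the projections $p_1, \ldots, p_d$ are $\ast$-freely independent in $(\mc{B}, \phi)$ by construction of the free product, Corollary~\ref{Cor:Independence-functor}(a) immediately implies that $X(p_1^\circ), \ldots, X(p_d^\circ)$ are $\ast$-freely independent in $(\mc{T}(\mc{B}, \phi), \Phi_t)$. Consequently the von Neumann algebra they generate decomposes as the reduced free product (with the restricted traces) of the individual algebras $W^\ast(X(p_i^\circ))$, each of which Theorem~\ref{Thm:C2}(c) identifies as $L^\infty[0,1]$ (absorbing the continuous spectrum) plus up to two atoms of traces $\max\set{1-\alpha_i(1+t), 0}$ (the ``$A$-atom'' at $-\alpha_i(1+t)$) and $\max\set{\alpha_i(1+t)-t, 0}$ (the ``$B$-atom'' at $(1-\alpha_i)(1+t)$).

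Next I invoke Dykema's rule that atoms in a reduced free product of $d$ factors arise exactly from tuples $(q_1, \ldots, q_d)$ of minimal central projections, one per factor, with $\sum_i \tau(q_i) > d-1$; the resulting atom has trace $\sum_i \tau(q_i) - (d-1)$. Enumerating the possibilities: picking every $A$-atom gives trace $1 - (1+t)\sum_i \alpha_i = \gamma_1$ when positive; picking the $B$-atom from factor $j$ and $A$-atoms from the others yields $(1+t)(\alpha_j - \sum_{i \neq j}\alpha_i) - t$, whose positivity condition $\alpha_j > \sum_{i \neq j}\alpha_i + t/(1+t)$ can hold for at most one index, necessarily the largest $\alpha_j$ (relabeled $\alpha_d$), producing $\gamma_2$. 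A short estimate using $\alpha_i \leq \tfrac{1}{2}$ rules out tuples containing two or more $B$-atoms or any projection drawn from a diffuse $L^\infty[0,1]$ summand. The three regimes in the statement thus correspond exactly to $(\gamma_1, \gamma_2 > 0)$, $(\gamma_1 > 0 = \gamma_2)$, and $(\gamma_1 = \gamma_2 = 0)$, matching the stated critical thresholds $t = (\alpha_d - \sum_{i<d}\alpha_i)/(1 - (\alpha_d - \sum_{i<d}\alpha_i))$ and $t = (1 - \sum_i \alpha_i)/\sum_i \alpha_i$.

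Everything left over assembles, by Dykema's theorems, into a single interpolated free group factor $L(\mf{F}_x)$, the parameter $x$ being fixed by additivity of Dykema's free dimension under reduced free products. The main obstacle will be the explicit computation of $x$: one must sum the free dimensions of each $W^\ast(X(p_i^\circ))$ (weighted by the diffuse and atomic pieces in Dykema's normalization), subtract the contribution of the surviving $\gamma_1, \gamma_2$ atoms, and solve. This bookkeeping is the substantive computational content of the theorem; the clean form of the statement absorbs it by simply declaring $x$ implicitly as the solution to Dykema's additivity relation.
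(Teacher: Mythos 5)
Your proposal is correct and follows essentially the same route as the paper: free independence via Corollary~\ref{Cor:Independence-functor}(a), the single-projection distribution from Theorem~\ref{Thm:C2}, and Dykema's free product calculus with additivity of free dimension. The only difference is organizational --- the paper carries out the atom bookkeeping by induction on $d$, applying Theorem~2.4 of \cite{Dykema-Free-products-hyperfinite} one factor at a time and checking that the cross terms $\gamma_{12},\gamma_{22}$ vanish, whereas you enumerate the surviving atom-tuples of minimal central projections directly via the rule $\tau(q_1\wedge\cdots\wedge q_d)=\max\bigl\{\sum_i\tau(q_i)-(d-1),0\bigr\}$; since that $d$-fold rule is itself justified by exactly such an iteration of Dykema's two-factor theorem, your argument is a repackaging of the same computation rather than a genuinely different proof.
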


\begin{proof}
Since $\alpha_d \leq \frac{1}{2}$,
\[
\frac{\alpha_d - \sum_{i=1}^{d-1}\alpha_i}{1 - \left( \alpha_d - \sum_{i=1}^{d-1}\alpha_i \right)} \leq \frac{1 - \left( \left( \sum_{i=1}^d\alpha_i \right) \right)}{ \left( \sum_{i=1}^d\alpha_i \right) }.
\]
So the statement of the theorem is equivalent to $W^\ast(X(p_i^\circ) : 1 \leq i \leq d)  \simeq \underset{1 - \gamma_1 - \gamma_2}{L(\mf{F}_x)} \oplus \underset{\gamma_1}{\mf{C}} \oplus \underset{\gamma_2}{\mf{C}}$, for $\gamma_1, \gamma_2$ as above. We will prove this by induction on $d$. For $d=1$, this follows from Theorem~\ref{Thm:C2}. Since by assumption, $\set{p_i^\circ : 1 \leq i \leq d}$ are free in $(\mc{B}, \phi)$, by Corollary~\ref{Cor:Independence-functor},
\[
W^\ast(X(p_i^\circ) : 1 \leq i \leq d) = \ast_{i=1}^d W^\ast(X(p_i^\circ)).
\]
Since free product is commutative, without loss of generality we may assume that $\alpha_i$'s are increasing.

Suppose the statement holds for $d$. Then by Theorem 2.4 of \cite{Dykema-Free-products-hyperfinite},
	\begin{equation*}
		\ast_{i=1}^{d+1} W^\ast(X(p_i^\circ)) \simeq \underset{1-\gamma_{11}-\gamma_{12} - \gamma_{21} - \gamma_{22}}{L(\mf{F}_x)} \oplus \underset{\gamma_{11}}{\mf{C}}\oplus \underset{\gamma_{12}}{\mf{C}} \oplus \underset{\gamma_{21}}{\mf{C}} \oplus \underset{\gamma_{22}}{\mf{C}} ,
	\end{equation*}
	where
	\begin{align*}
		\gamma_{11} &= \max\left\{ 1 -  \left( \sum_{i=1}^{d+1}\alpha_i \right)(1+t), 0 \right\} \\
		\gamma_{12} &= \max\left\{ \left( \alpha_{d} - \alpha_{d+1} - \sum_{i=1}^{d-1}\alpha_i \right)(1+t) - t, 0\right\} \\
		\gamma_{21} &= \max\left\{ \left( \alpha_{d+1} - \sum_{i=1}^{d}\alpha_i \right)(1+t) - t, 0 \right\} \\
		\gamma_{22} &= \max\left\{ \left( \alpha_{d+1} + \alpha_{d} - \sum_{i=1}^{d-1}\alpha_i \right)(1+t) - 2t - 1, 0 \right\}.
	\end{align*}
Note that this expansion holds even if some of $\gamma_1, \gamma_2$ are zero. Since $\alpha_{d} \leq \alpha_{d+1}$, $\gamma_{12} = 0$. Since $\alpha_{d}, \alpha_{d+1} \leq \frac{1}{2}$, $\gamma_{22} = 0$. Finally, $\gamma_{11}$ and $\gamma_{21}$ are precisely the forms of $\gamma_1$, $\gamma_2$ for $d+1$.
The result follows.
\end{proof}


Next, we consider the case $\mc{B} = M_{d+1}(\mf{C})$, with a vector state, for $d \geq 2$.

\begin{Remark}
Following \cite{Arizmendi-Torres-c-free-Hilbert}, for $1 \leq i \leq d$, let $\mc{H}_i = \mf{C} \Omega_i \oplus \mc{H}_i^\circ$ be pointed Hilbert spaces, and $(\mc{B}_i, \phi_i)$ be $\ast$-probability spaces represented on them as in Construction~\ref{Constr:Centered}. Let
\[
\mc{H} = \mf{C} \Omega \oplus \bigoplus_{i=1}^d \mc{H}_i^\circ.
\]
Represent each $\mc{B}_i$ on $\mc{H}$ by
\[
\begin{split}
b_i (s \Omega \oplus \xi_1 \oplus \ldots \oplus \xi_d)
& = s \phi_i[b_i] \Omega \oplus 0 \oplus \ldots \oplus (s b_i \Omega - s \phi_i[b_i] \Omega) \oplus \ldots \oplus 0 \\
&\quad + \ip{b_i \xi_i}{\Omega_i} \Omega \oplus 0 \oplus \ldots \oplus (b_i \xi_i - \ip{b_i \xi_i}{\Omega_i} \Omega) \oplus \ldots \oplus 0.
\end{split}
\]
Let $\mc{B}$ be the algebra generated by these non-unital embeddings of $\set{\mc{B}_i : 1 \leq i \leq d}$ in $\mf{B}(\mc{H})$, and $\phi$ the vector state given by $\Omega$. Then $\set{\mc{B}_i : 1 \leq i \leq d}$ are Boolean independent in $(\mc{B}, \phi)$.
\end{Remark}

\begin{Cor}
\label{Cor:Boolean-product-C2}
For $d \geq 2$ and $\set{\alpha_i : 1 \leq i \leq d} \subset (0,1)$, the Boolean product of algebras $\underset{\alpha_i}{\overset{p_i}{\mf{C}}} \oplus \overset{1 - p_i}{\underset{1 - \alpha_i}{\mf{C}}}$ for $1 \leq i \leq d$ is isomorphic to $M_{d+1}(\mf{C})$ with a vector state.
\end{Cor}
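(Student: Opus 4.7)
The plan is to make the construction in the preceding Remark fully explicit. Each factor $\mc{B}_i = \mf{C} \oplus \mf{C}$ has a faithful state $\phi_i$ since $\alpha_i \in (0,1)$, so its GNS Hilbert space $\mc{H}_i$ is two-dimensional, and $\mc{H}_i^\circ$ is spanned by the single unit vector $\xi_i := p_i^\circ \Omega_i / \sqrt{\alpha_i(1-\alpha_i)}$. Consequently $\mc{H} = \mf{C}\Omega \oplus \bigoplus_{i=1}^d \mf{C}\xi_i$ has dimension $d+1$, so the algebra $\mc{B}$ constructed in the Remark embeds into $\mf{B}(\mc{H}) \simeq M_{d+1}(\mf{C})$; the task is to show that this embedding is surjective and that the vacuum state corresponds to $\phi_{11}$.

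Working in the ordered basis $\set{\Omega, \xi_1, \ldots, \xi_d}$, I would plug directly into the Remark's formulas to compute matrix representations for the generators. A short calculation gives $1_{\mc{B}_i} \mapsto E_{1,1} + E_{i+1, i+1}$ and
\[
p_i \longmapsto \alpha_i E_{1,1} + \sqrt{\alpha_i(1-\alpha_i)}\,(E_{1, i+1} + E_{i+1, 1}) + (1-\alpha_i) E_{i+1, i+1},
\]
which is the rank-one projection onto $\sqrt{\alpha_i}\,\Omega + \sqrt{1-\alpha_i}\,\xi_i$. Since $\prod_{i=1}^d 1_{\mc{B}_i} = E_{1,1}$, one has $E_{1,1} + \sum_{i=1}^d (1_{\mc{B}_i} - E_{1,1}) = I$, so $\mc{B}$ is unital and $\ast$-closed. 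Identifying $\Omega$ with the first basis vector makes $\ip{\cdot\, \Omega}{\Omega}$ the vector state $\phi_{11}$ on $M_{d+1}(\mf{C})$.

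It remains to verify that $\mc{B}$ has trivial commutant in $M_{d+1}(\mf{C})$. If $A$ commutes with every $1_{\mc{B}_i} = E_{1,1} + E_{i+1, i+1}$, then $A$ preserves each $V_i = \Span(e_1, e_{i+1})$ together with its orthogonal complement; because $d \geq 2$, intersecting these constraints yields $A e_1 \in \bigcap_i V_i = \mf{C} e_1$ and $A e_{j+1} \in \bigcap_{i \neq j} V_i^\perp = \mf{C} e_{j+1}$, forcing $A$ to be diagonal. Then the $(1, i+1)$ entry of the commutator $[p_i, A]$ equals $\sqrt{\alpha_i(1-\alpha_i)}(A_{i+1, i+1} - A_{1,1})$, and setting this to zero gives $A_{1,1} = A_{i+1, i+1}$ for every $i$, so $A$ is scalar. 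By the double commutant theorem $\mc{B} = M_{d+1}(\mf{C})$. The only place where the hypothesis $d \geq 2$ is used is the commutant step, and no significant obstacle arises beyond routine matrix computation.
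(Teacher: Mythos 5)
Your proof is correct, and it follows the same overall route as the paper: realize the Boolean product concretely on the $(d+1)$-dimensional space $\mf{C}\Omega \oplus \bigoplus_{i=1}^d \mc{H}_i^\circ$, compute the matrices of the generators in the basis $\set{\Omega, \xi_1, \ldots, \xi_d}$, and then check that the generated $\ast$-algebra is all of $M_{d+1}(\mf{C})$ with $\ip{\cdot\,\Omega}{\Omega} = \phi_{11}$. The only real difference is the surjectivity step. The paper multiplies generators directly: writing $p_i^\circ = \beta_i(E_{1,i+1} + E_{i+1,1}) + \gamma_i E_{i+1,i+1}$ with $\beta_i \neq 0$, it uses $p_i^\circ p_j^\circ = \beta_i \beta_j E_{i+1,j+1}$ for $i \neq j$ to manufacture every matrix unit, which shows the slightly stronger fact that the centered projections $p_i^\circ$ alone already generate $M_{d+1}(\mf{C})$. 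You instead adjoin the units $1_{\mc{B}_i}$, verify that the algebra is unital, compute that its commutant is $\mf{C}I$, and invoke the bicommutant theorem (legitimate here, since a finite-dimensional $\ast$-subalgebra is automatically closed). Both arguments are complete and routine; the paper's is marginally more self-contained, while yours makes transparent exactly where $d \geq 2$ is used, namely in the intersections $\bigcap_i \Span(e_1, e_{i+1}) = \mf{C}e_1$ and $\bigcap_{i \neq j} V_i^\perp = \mf{C}e_{j+1}$, which fail for $d = 1$ (consistent with the single-factor case being $\mf{C}^2 \subsetneq M_2(\mf{C})$).
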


\begin{proof}
Let $\mc{H} = \mf{C} \Omega \oplus \bigoplus_{i=1}^d \mc{H}_i$, where each $\mc{H}_i \simeq \mf{C}$. On $\mf{B}(\mf{C} \Omega \oplus \mc{H}_i) = M_2(\mf{C})$, let $\phi_i$ be the vector state for $\Omega$, in other words the $(1,1)$ entry of the matrix. In this algebra, let $p_i$ be a projection with $\phi[p_i] = \alpha_i$, and $p^\circ_i = p_i - \alpha_i$. Denote by $\mc{B}$ the subalgebra of $M_{d+1}(\mf{C})$ generated by $\set{p^\circ_i: 1 \leq i \leq d}$.

Note that $p^\circ_i = \beta_i (E_{1, i+1} + E_{i+1, 1}) + \gamma_i E_{i+1, i+1}$, with $\beta_i \neq 0$. Then for $i \neq j$, $p^\circ_i p^\circ_j = \beta_i \beta_j E_{i+1, j+1}$. Therefore $E_{i+1, j+1} \in \mc{B}$ for $i \neq j$. Multiplying these, also $E_{i+1, i+1} \in \mc{B}$. Next, $p^\circ_i E_{i+1, i+1} = \beta_i E_{1, i+1} + \gamma_i E_{i+1, i+1}$, so also $E_{1, i+1} \in \mc{B}$, as is $E_{i+1, 1}$. Multiplying these, also $E_{1,1} \in \mc{B}$.
\end{proof}

\begin{Prop}
\label{Prop:Boolean-Psi}
Let $\set{\alpha_i : 1 \leq i \leq d} \subset (0,1)$, and $\set{p_i : 1 \leq i \leq d}$ as in the preceding Corollary. In $W^\ast(\mc{T}(M_{d+1}(\mf{C}), \phi_{11}), \Psi_t)$, the von Neumann subalgebra generated by $X(p_i^\circ)$ is
\[
W^\ast(X(p_i^\circ) : 1 \leq i \leq d) \simeq \underset{1-\gamma}{L(\mf{F}_x)}\oplus \underset{\gamma}{\mf{C}},
\]
where
\begin{equation*}
\gamma = \max\left\{ 1 - t\sum_{i=1}^d \frac{\alpha_i (1-\alpha_i)}{(1-2\alpha_i)^2}, 0\right\},
\end{equation*}
with the convention that $\gamma = 0$ if any $\alpha_i = \frac{1}{2}$. $x$ is chosen so that the free dimension is the sum of the free dimensions of $W^\ast(X(p_i^\circ))$.
\end{Prop}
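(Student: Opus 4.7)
The plan is to combine the Boolean-to-free transfer of Corollary~\ref{Cor:Independence-functor}(b) with the single-variable computation in Theorem~\ref{Thm:C2}(d) and with Dykema's free-product formulas from \cite{Dykema-Free-products-hyperfinite}. By Corollary~\ref{Cor:Boolean-product-C2}, the projections $p_i$ embed $d$ copies of $\mf{C}^2$ as Boolean-independent subalgebras of $(M_{d+1}(\mf{C}), \phi_{11})$, so the centered elements $p_i^\circ$ are $\ast$-Boolean independent there. Corollary~\ref{Cor:Independence-functor}(b) then produces $\ast$-freeness of $\set{X(p_i^\circ) : 1 \leq i \leq d}$ with respect to $\Psi_t$, giving
\[
W^\ast(X(p_i^\circ) : 1 \leq i \leq d) \simeq \ast_{i=1}^d W^\ast(X(p_i^\circ)).
\]
Although Lemma~\ref{Lemma:Nontracial} shows $\Psi_t$ is not tracial on all of $\mc{T}(M_{d+1}(\mf{C}), \phi_{11})$, each single-generator subalgebra $W^\ast(X(p_i^\circ))$ is commutative, so the restriction of $\Psi_t$ there is automatically a trace and the resulting free product is a reduced free product of tracial von Neumann algebras, where Dykema's machinery is available.

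Next I would read off each factor from Theorem~\ref{Thm:C2}(d): the $\Psi_t$-distribution of $X(p_i^\circ)$ is absolutely continuous on a single interval (contributing a diffuse abelian summand) plus, when $\alpha_i \neq \tfrac12$, a single atom of mass
\[
\gamma_i = \max\set{1 - t\, \frac{\alpha_i(1-\alpha_i)}{(1-2\alpha_i)^2},\, 0},
\]
while for $\alpha_i = \tfrac12$ the distribution is purely absolutely continuous, so we set $\gamma_i = 0$. Hence
\[
W^\ast(X(p_i^\circ)) \simeq \underset{1-\gamma_i}{L^\infty[0,1]} \oplus \underset{\gamma_i}{\mf{C}}.
\]

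Finally, I would apply Dykema's free-product decomposition for direct sums of a diffuse abelian part and finite-dimensional atomic parts: the free product $\ast_{i=1}^d(\underset{1-\gamma_i}{L^\infty[0,1]} \oplus \underset{\gamma_i}{\mf{C}})$ is an interpolated free group factor direct sum an atomic piece of total trace $\max\set{\sum_{i=1}^d \gamma_i - (d-1),\, 0}$, with the free-group parameter $x$ fixed by additivity of free dimension. A short algebraic check identifies this atomic trace with the claimed $\gamma$: if every $\gamma_i > 0$ then $\sum_i \gamma_i - (d-1) = 1 - t \sum_i \alpha_i(1-\alpha_i)/(1-2\alpha_i)^2$ by direct substitution, while if some $\gamma_i = 0$ then $t\alpha_i(1-\alpha_i)/(1-2\alpha_i)^2 \geq 1$, forcing both $\sum_i \gamma_i - (d-1) \leq 0$ and $1 - t\sum_j \alpha_j(1-\alpha_j)/(1-2\alpha_j)^2 \leq 0$, so both expressions collapse to $0$. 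The principal point requiring care is confirming that the non-tracial $\Psi_t$ restricts to a trace on the free-product subalgebra (settled by the commutativity argument above); once this is in hand, the remaining content is a direct citation of the free-product and free-dimension formulas in \cite{Dykema-Free-products-hyperfinite}.
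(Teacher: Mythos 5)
Your proposal is correct and follows essentially the same route as the paper's own proof: Boolean independence of the $p_i^\circ$ via Corollary~\ref{Cor:Boolean-product-C2}, transfer to $\ast$-freeness under $\Psi_t$ via Corollary~\ref{Cor:Independence-functor}(b), the single-variable distributions from Theorem~\ref{Thm:C2}(d), and Dykema's Theorem 2.4 to assemble the free product. Your explicit remarks on why the non-tracial $\Psi_t$ still restricts to a trace on each abelian generator algebra, and the algebraic verification of the formula for $\gamma$, are details the paper leaves implicit, but they do not change the argument.
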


\begin{proof}
The argument is similar to Theorem~\ref{Thm:Free-product}. By Corollary~\ref{Cor:Boolean-product-C2}, $\set{p_i^\circ : 1 \leq i \leq d}$ are Boolean independent in $(M_{d+1}(\mf{C}), \phi_{11})$. Therefore by Corollary~\ref{Cor:Independence-functor}(b),
\[
W^\ast(X(p_i^\circ) : 1 \leq i \leq d) = \ast_{i=1}^d W^\ast(X(p_i^\circ)).
\]
For $d=1$, it follows from Theorem~\ref{Thm:C2} that $W^\ast(X(p_1^\circ)) \simeq L(\mf{F}_1)$ for $\alpha_1 = \frac{1}{2}$, and otherwise
\[
W^\ast(X(p_1^\circ)) \simeq \underset{1-\gamma_1}{L(\mf{F}_1)}\oplus \underset{\gamma_1}{\mf{C}},
\]
where
\begin{equation*}
\gamma_1 = \max\left\{ 1 - t \frac{\alpha_1 (1-\alpha_1)}{(1-2\alpha_1)^2}, 0\right\},
\end{equation*}
In this setting, Theorem 2.4 of \cite{Dykema-Free-products-hyperfinite} states that
\[
\ast_{i=1}^d \left( \underset{1-\gamma_i}{L(\mf{F}_1)}\oplus \underset{\gamma_i}{\mf{C}} \right) \simeq \underset{1-\gamma}{L(\mf{F}_x)}\oplus \underset{\gamma}{\mf{C}},
\]
where
\begin{equation*}
\gamma = \max \left( \sum_{i=1}^d \gamma_i - (d-1), 0 \right).
\end{equation*}
The result follows.
\end{proof}

\begin{proof}[Proof of Theorem~\ref{Thm:Boolean-Phi}]
In the setting of Construction~\ref{Constr:Ricard}, Ricard \cite{Ricard-t-Gaussian} proved that with respect to the vacuum state $\phi$, $Z(f, \theta)$ has the distribution
\begin{equation}
\label{Eq:Ricard1}
\frac{\sqrt{4 \theta - x^2}}{2 \pi (1 - (1 - \theta) x^2)} \,dx + \max \left( \frac{1 - 2 \theta}{2 (1 -\theta)}, 0 \right) (\delta_{-1/\sqrt{1 - \theta}} + \delta_{1/ \sqrt{1 - \theta}}),
\end{equation}
while with respect to another state $\psi$ it has the distribution
\begin{equation}
\label{Eq:Ricard2}
\frac{1}{2 \pi \theta} \sqrt{4 \theta - x^2} \,dx.
\end{equation}
Moreover, Ricard proved that for orthogonal $f_i$'s, these operators are conditionally free with respect to the pair $(\phi, \psi)$.

By (the proof of) Corollary~\ref{Cor:Boolean-product-C2}, $\set{p_i^\circ : 1 \leq i \leq d}$ are Boolean independent in $(M_{d+1}(\mf{C}), \phi_{11})$. Therefore by Corollary~\ref{Cor:Independence-functor}(b), $\set{X(p_i^\circ) : 1 \leq i \leq d}$ are conditionally free with respect to the pair $(\Phi_t, \Psi_t)$. Moreover, the choice of $p_i$ in the hypothesis of this theorem corresponds to $\alpha = \frac{1}{2}$ in Theorem~\ref{Thm:C2}. Re-scale the time by $t = \frac{\theta}{1 - \theta}$. Then according to that theorem, the distributions of each $2 \sqrt{1 - \theta} X(p_i^\circ)$ with respect to $\Phi_t$ and $\Psi_t$ match precisely the distributions in \eqref{Eq:Ricard1} and \eqref{Eq:Ricard2} above. It follows that the von Neumann algebras generated by $\set{Z(f_i, \theta) : 1 \leq i \leq d}$ and $\set{X(p_i^\circ) : 1 \leq i \leq d}$ are isomorphic. Ricard showed that this von Neumann algebra is
\begin{equation}
\label{Eq:Ricard-vN}
\Gamma_{\theta,d} = \begin{cases}
L(\mf{F}_d) & \text{ if } \theta \in \left[ \frac{d}{d + \sqrt{d}}, \frac{d}{d - \sqrt{d}} \right]\\
L(\mf{F}_d) \oplus \mf{B}(\ell_2) & \text{ otherwise},
\end{cases}
\end{equation}
and $\Gamma_{\theta, \infty} = \mf{B}(\ell^2)$. Finally, a simple calculation converts the cutoff in \eqref{Eq:Ricard-vN} from $\theta$ to $t$.
\end{proof}


Finally, we take $\mc{B} = L^\infty[0, 2 \pi]$ with Lebesgue measure.

\begin{proof}[Proof of Theorem~\ref{Thm:L-infty}]
Applying Lemma 2.2 in \cite{Dykema-Free-products-hyperfinite} with either $\beta$ or $1-\beta$ equal to $\frac{1}{1 + t}$ so that $\beta \geq \frac{1}{2}$,
\[
L^\infty[0,1] \ast \left( \underset{\beta}{\overset{q}{\mf{C}}} \oplus \overset{1 - q}{\underset{1 - \beta}{\mf{C}}} \right) \simeq L\left(\mf{F}\left(1 + 2 \beta (1 - \beta)\right)\right) = L\left(\mf{F}\left(1 + \frac{2 t}{(1+t)^2}\right)\right),
\]
Therefore by Proposition~\ref{Prop:Compression},
\[
W^\ast(\mc{T}(L^\infty[0,1], dx), \Phi_t) \simeq L\left(\mf{F}\left(1 + \frac{2 t}{(1+t)^2}\right)\right)_{\frac{1}{1+t}} \simeq L(\mf{F}_{1 + 2 t}). \qedhere
\]
\end{proof}

The comparison between Theorem~\ref{Thm:L-infty} and the following proposition explains the choice of the coefficients in the annihilation operator in the main construction.

\begin{Prop}
\label{Prop:s-neq-t}
Define the operators $a^+$, $a^0$, $a^-$ as in the main construction, with the exception that $a^-(f) (g) = (1 + s) a^-_\phi(f) (g)$. Suppose that $s \neq t$.
\begin{enumerate}
\item
The vacuum state is in general not tracial.
\item
Suppose in addition that $\mc{B} = L^\infty[0, 1)$, with $\phi$ the Lebesgue integration. Then the $(s \neq t)$ version of $\Gamma_w(\mc{B}, \phi; t)$ is the algebra of all bounded operators on the Fock space.
\end{enumerate}
\end{Prop}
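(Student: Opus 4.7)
For part (a), my plan is to first derive the modified moment formula by tracking how the change from $(1+t)$ to $(1+s)$ in the single-level action of $a^-$ propagates through the recursion of Proposition~\ref{Prop:AM22}(b); since this coefficient governs the contraction of a level-one tensor to the vacuum (exactly the closure of an outer block), the formula becomes
\[
\Phi[X(f_1) \ldots X(f_n)] = \sum_{\pi \in \mathrm{NC}_{ns}(n)} (1+s)^{|\Outer(\pi)|} t^{|\Inner(\pi)|} \prod_{V \in \pi} \phi[W(V)].
\]
To exhibit non-traciality, I would specialize to any tracial $(\mc{B}, \phi)$ with $f, g \in (\mc{B}^\circ)^{sa}$ satisfying $\phi[fg] = 0$ and $\phi[f^2], \phi[g^2] > 0$ (for instance $L^\infty[0,1)$), and compare $\Phi[X(f)^2 X(g)^2]$ with its cyclic shift $\Phi[X(g) X(f)^2 X(g)]$. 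Enumerating the three elements of $\mathrm{NC}_{ns}(4)$ and using traciality of $\phi$ to simplify the resulting $W(V)$-values, I expect the difference to collapse to $(1+s)(s-t) \phi[f^2] \phi[g^2]$, which is nonzero by hypothesis.

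For part (b), the idea is to exploit the rank-one perturbation that distinguishes the modified construction from the $s = t$ case. Writing $X_t(f)$ for the original (self-adjoint) Construction~\ref{Constr:Centered} operator on the same Fock space, and setting $C_f = a^-_\phi(f) P_1$ with $P_1$ the projection onto the level-one subspace $\mc{H}^\circ$, one has $X(f) = X_t(f) + (s-t) C_f$, so for $f = f^*$
\[
X(f) - X(f)^* = (s-t)(C_f - C_f^*) \in \Gamma_w.
\]
A direct calculation --- using that $C_f^2 = 0 = (C_f^*)^2$ by support considerations, $C_f C_f^* = \tfrac{\phi[f^2]}{1+t}|\Omega\rangle\langle\Omega|$, and $C_f^* C_f = \tfrac{\phi[f^2]}{1+t} Q_f$, where $Q_f$ denotes the rank-one projection onto $\mf{C} f \Omega \subset \mc{H}^\circ$ --- then gives
\[
(X(f) - X(f)^*)^2 = -\tfrac{(s-t)^2 \phi[f^2]}{1+t}\bigl( |\Omega\rangle\langle\Omega| + Q_f \bigr) \in \Gamma_w.
\]

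To extract $|\Omega\rangle\langle\Omega|$, I would take a second self-adjoint $g \in \mc{B}^\circ$ with $\phi[f^2] = \phi[g^2] = 1$ and $\phi[fg] = 0$ (abundant in $L^\infty[0,1)$), so that $|\Omega\rangle\langle\Omega| + Q_g \in \Gamma_w$ as well. Since $|\Omega\rangle\langle\Omega|, Q_f, Q_g$ are pairwise orthogonal rank-one projections, the identity
\[
\bigl(|\Omega\rangle\langle\Omega| + Q_f\bigr) \bigl((|\Omega\rangle\langle\Omega| + Q_f) - (|\Omega\rangle\langle\Omega| + Q_g)\bigr) = Q_f
\]
shows $Q_f \in \Gamma_w$, and hence $|\Omega\rangle\langle\Omega| = (|\Omega\rangle\langle\Omega| + Q_f) - Q_f \in \Gamma_w$. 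Since $\Omega$ remains cyclic for the algebra generated by the modified variables (the top-level argument is unchanged), for all $T_1, T_2 \in \Gamma_w$ we obtain $T_1 |\Omega\rangle\langle\Omega| T_2^* = |T_1 \Omega\rangle\langle T_2 \Omega| \in \Gamma_w$; as $T_1, T_2$ vary these span a strongly dense set of rank-one operators on $\mc{F}$, giving $\Gamma_w \supseteq \mc{K}(\mc{F})$ and hence $\Gamma_w = \mc{K}(\mc{F})'' = \mf{B}(\mc{F})$. I expect the main delicate step to be the computation of $(X(f) - X(f)^*)^2$ as a scalar multiple of $|\Omega\rangle\langle\Omega| + Q_f$, which requires careful bookkeeping of the $(1+t)$ weight at levels zero and one through the adjoint operations.
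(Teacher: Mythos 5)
Your proposal is correct, and for part (b) it takes a genuinely different route from the paper. The paper's argument is an averaging one in the spirit of Wysocza\'nski: using the orthonormal characters $e_k$ of $L^\infty[0,1)$ it shows $\frac{1}{N}\sum_{k=1}^N X(e_k)X(e_k^\ast) \to (1+t)I + (s-t)P_\Omega$ in SOT, extracts $P_\Omega$ from the quadratic relation $S^2-(1+t)S=(1+s)(s-t)P_\Omega$, and then invokes cyclicity of $\Omega$ together with Proposition 2.4 of \cite{Wysoczanski-infinite-t} to get a trivial commutant. You instead observe that the perturbation is finite rank, so that $X(f)-X(f)^\ast=(s-t)(C_f-C_f^\ast)$ is an explicit skew-adjoint rank-two element of $\Gamma_w$ whose square is $-\frac{(s-t)^2\phi[f^2]}{1+t}(P_\Omega+Q_f)$ (your constants are right once the $(1+t)$ weight on level one is accounted for), and two orthonormal choices $f,g$ isolate $P_\Omega$ algebraically; the conclusion then follows from the standard ``rank-one projection plus cyclic vector implies all compacts'' argument. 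Your version avoids any SOT limit and the external reference, needs only two orthonormal self-adjoint elements of $\mc{B}^\circ$ rather than the full sequence of characters (so it proves more than the stated $L^\infty[0,1)$ case), at the cost of a slightly more delicate adjoint computation; the paper's version is shorter given the citation and parallels the existing $t$-Gaussian literature. For part (a), the paper simply appeals to the traciality criterion of Proposition~\ref{Prop:AM22}(d) as in Lemma~\ref{Lemma:Nontracial}, whereas you exhibit a concrete fourth-moment counterexample; I checked that the three partitions in $\mathrm{NC}_{ns}(4)$ do give $\Phi[X(f)^2X(g)^2]-\Phi[X(g)X(f)^2X(g)]=(1+s)(s-t)\phi[f^2]\phi[g^2]$, which is nonzero for $s\neq t$ (with the harmless caveat $s\neq -1$, a degenerate value anyway). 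Both of your arguments are valid.
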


\begin{proof}
The argument for part (a) is the same as in Lemma~\ref{Lemma:Nontracial}. For part (b), the proof follows the general idea of \cite{Wysoczanski-infinite-t}. Let $e_k = e^{2\pi i k \theta}$. We first show that
\[
\frac{1}{N}\sum_{k=1}^{N} X(e_k)X(e_k^\ast) \rightarrow S:= (1+t)I + (s-t)P_\Omega
\]
in SOT, where $P_\Omega$ is the projection onto $\Omega$. Decompose each $X(e_k) = a^+(e_k) + a^-(e_k) + a^0(e_k)$. Then we have $9$ cases. It is easy to check that
\[
a^-(e_k) a^+(e_k^\ast) = t (I - P_\Omega) + (1 + s) P_\Omega, \quad a^0(e_k) a^0(e_k^\ast) = I - P_\Omega,
\]
and
\[
\frac{1}{N}\sum_{k=1}^{N} a^\sharp(e_k) a^\flat(e_k^\ast) \rightarrow 0
\]
for all the other choices of $\sharp, \flat$. It follows that $S$ is in $\Gamma_{w}(\mc{B},\phi;t)$. Next,
\[
S^2 - (1+t)S = (1+s)(s-t)P_\Omega,
\]
which shows $P_\Omega\in \Gamma_{w}(\mc{B},\phi;t)$ since $s\neq t$. It is also easy to check that $\Omega$ is cyclic for $\Gamma_{w}(\mc{B},\phi;t)$. So by Proposition 2.4 of \cite{Wysoczanski-infinite-t}, the commutant of $\Gamma_{w}(\mc{B},\phi;t)$ is trivial. Thus $\Gamma_{w}(\mc{B},\phi;t)$ is the algebra of all bounded operators on the Fock space.
\end{proof}

\def\cprime{$'$} \def\cprime{$'$}
\providecommand{\bysame}{\leavevmode\hbox to3em{\hrulefill}\thinspace}
\providecommand{\MR}{\relax\ifhmode\unskip\space\fi MR }
\providecommand{\MRhref}[2]{%
  \href{http://www.ams.org/mathscinet-getitem?mr=#1}{#2}
}
\providecommand{\href}[2]{#2}


\begin{thebibliography}{ABTA20}

\bibitem[ABFN13]{Ans-Bel-Fev-Nica}
Michael Anshelevich, Serban~T. Belinschi, Maxime Fevrier, and Alexandru Nica,
  \emph{Convolution powers in the operator-valued framework}, Trans. Amer.
  Math. Soc. \textbf{365} (2013), no.~4, 2063--2097. \MR{3009653}

\bibitem[ABTA20]{Arizmendi-Torres-c-free-Hilbert}
Octavio Arizmendi, Miguel Ballesteros, and Francisco Torres-Ayala,
  \emph{Conditionally free reduced products of {H}ilbert spaces}, Studia Math.
  \textbf{254} (2020), no.~1, 23--44. \MR{4096109}

\bibitem[AM21]{Ans-Mashburn-Nearest}
Michael Anshelevich and Jacob Mashburn, \emph{Some {F}ock spaces with depth two
  action}, 2021, arXiv:2103.13936 [math.OA].

\bibitem[Ans07]{AnsFree-Meixner}
Michael Anshelevich, \emph{Free {M}eixner states}, Comm. Math. Phys.
  \textbf{276} (2007), no.~3, 863--899. \MR{2350440 (2009b:81106)}

\bibitem[Ans09]{AnsBoolean}
\bysame, \emph{Appell polynomials and their relatives. {II}. {B}oolean theory},
  Indiana Univ. Math. J. \textbf{58} (2009), no.~2, 929--968. \MR{2514394
  (2010d:46090)}

\bibitem[BB04]{Belinschi-Bercovici-Partially-defined}
Serban~T. Belinschi and Hari Bercovici, \emph{Atoms and regularity for measures
  in a partially defined free convolution semigroup}, Math. Z. \textbf{248}
  (2004), no.~4, 665--674. \MR{2103535 (2006i:46095)}

\bibitem[BN08]{Belinschi-Nica-Eta}
Serban~T. Belinschi and Alexandru Nica, \emph{{$\eta$}-series and a {B}oolean
  {B}ercovici-{P}ata bijection for bounded {$k$}-tuples}, Adv. Math.
  \textbf{217} (2008), no.~1, 1--41. \MR{2357321}

\bibitem[BN09]{Belinschi-Nica-Free-BM}
\bysame, \emph{Free {B}rownian motion and evolution towards
  {$\boxplus$}-infinite divisibility for {$k$}-tuples}, Internat. J. Math.
  \textbf{20} (2009), no.~3, 309--338. \MR{2500073}

\bibitem[BV95]{BerVoiSuperconvergence}
Hari Bercovici and Dan Voiculescu, \emph{Superconvergence to the central limit
  and failure of the {C}ram\'er theorem for free random variables}, Probab.
  Theory Related Fields \textbf{103} (1995), no.~2, 215--222. \MR{1355057
  (96k:46115)}

\bibitem[BW01]{Boz-Wys}
Marek Bo{\.z}ejko and Janusz Wysocza{\'n}ski, \emph{Remarks on
  {$t$}-transformations of measures and convolutions}, Ann. Inst. H. Poincar\'e
  Probab. Statist. \textbf{37} (2001), no.~6, 737--761. \MR{1863276
  (2002i:60005)}

\bibitem[Dyk93]{Dykema-Free-products-hyperfinite}
Ken Dykema, \emph{Free products of hyperfinite von {N}eumann algebras and free
  dimension}, Duke Math. J. \textbf{69} (1993), no.~1, 97--119. \MR{1201693}

\bibitem[MS17]{Mingo-Speicher-book}
James~A. Mingo and Roland Speicher, \emph{Free probability and random
  matrices}, Fields Institute Monographs, vol.~35, Springer, New York; Fields
  Institute for Research in Mathematical Sciences, Toronto, ON, 2017.
  \MR{3585560}

\bibitem[NS96]{Nica-Speicher-Multiplication}
Alexandru Nica and Roland Speicher, \emph{On the multiplication of free
  {$N$}-tuples of noncommutative random variables}, Amer. J. Math. \textbf{118}
  (1996), no.~4, 799--837. \MR{98i:46069}

\bibitem[Ric06]{Ricard-t-Gaussian}
{\'E}ric Ricard, \emph{The von {N}eumann algebras generated by
  {$t$}-{G}aussians}, Ann. Inst. Fourier (Grenoble) \textbf{56} (2006), no.~2,
  475--498. \MR{2226024 (2007g:46101)}

\bibitem[Shl97]{Shl97b}
Dimitri Shlyakhtenko, \emph{${R}$-transform of certain joint distributions},
  Free probability theory (Waterloo, ON, 1995), Fields Inst. Commun., vol.~12,
  Amer. Math. Soc., Providence, RI, 1997, pp.~253--256.

\bibitem[Shl13]{Shl-Free-convolution}
\bysame, \emph{On operator-valued free convolution powers}, Indiana
  Univ. Math. J. \textbf{62} (2013), no.~1, 91--97. \MR{3158503}

\bibitem[Wys06]{Wysoczanski-infinite-t}
Janusz Wysocza{\'n}ski, \emph{The von {N}eumann algebra associated with an
  infinite number of {$t$}-free noncommutative {G}aussian random variables},
  Quantum probability, Banach Center Publ., vol.~73, Polish Acad. Sci. Inst.
  Math., Warsaw, 2006, pp.~435--438. \MR{2423148 (2009h:46130)}

\end{thebibliography}

\end{document}